\renewcommand{\leq}{\leqslant}
\renewcommand{\geq}{\geqslant}
\renewcommand{\div}{\operatorname{div}}
\newcommand{\dist}{\operatorname{dist}}
\newcommand{\Id}{\operatorname{Id}}
\newtheorem{Theorem}{Theorem}
\newtheorem{Definition}{Definition}
\newtheorem{Proposition}{Proposition}
\newtheorem{Lemma}{Lemma}
\newtheorem{Claim}{Claim}
\begin{document}

\date{\today}
\title{On the weak uniqueness of ``viscous incompressible fluid + rigid body'' system with Navier slip-with-friction conditions in a 2D bounded domain}
\author{Marco Bravin\footnote{Institut de Math\'ematiques de Bordeaux, UMR CNRS 5251,
Universit\'e de Bordeaux, 351 cours
de la Lib\'eration, F33405 Talence Cedex, France. }}

\maketitle

\begin{abstract}
The existence of weak solutions to the ``viscous incompressible fluid + rigid body'' system with Navier slip-with-friction conditions in a 3D bounded domain has been recently proved by  G\'{e}rard-Varet and Hillairet in \cite{exi:GeH}. 
In 2D for a fluid alone (without any rigid body) it is well-known since Leray that  weak solutions are unique, continuous in time with $ L^{2} $ regularity in space and satisfy the energy equality.
In this paper we prove that these properties also hold for the 2D ``viscous incompressible fluid + rigid body'' system.
\end{abstract}

\section*{Introduction}

The problem of a rigid body immersed in an incompressible viscous fluid with different boundary conditions has been studied a lot in the past years. At a mathematical level we have a bounded domain $ \Omega $, independent in time, which is the union of two time-dependent domains $ \mathcal{F}(t) $ and $ \mathcal{S}(t) $, i.e. $ \Omega = \mathcal{F}(t) \cup \mathcal{S}(t) $ as in Figure \ref{domain}, where     
$ \mathcal{F}(t) $ is the part of the domain fulfilled by an incompressible viscous fluid, which satisfies Navier-Stokes equations and $\mathcal{S}(t) $ the part of the domain which is occupied by the body which rigidly moves following Newton's laws. The problem is to study the evolution of the motion of the fluid and of the rigid body.

Until the body does not touch the boundary, there are two separate boundaries: $\partial \Omega $ and $ \partial \mathcal{S}(t) $, where we can impose different boundary conditions.  
The most classical setting for this problem is to prescribe  no-slip boundary condition on both $ \partial \mathcal{S}(t) $ and $ \partial \Omega$. In this case a wide literature is available regarding the existence of both  weak  and  strong solutions, see \cite{GLS}, \cite{SMST}, \cite{GGH}, \cite{str:dem}. Moreover in the 2D  case   weak solutions are also continuous in time with values in $ L^2_{\sigma} $ and unique \cite{UnGS}.
Another option is to prescribe Navier slip-with-friction boundary condition on both  $ \partial \Omega $ and $ \partial \mathcal{S}(t) $. This condition naturally   appears in the rugosity limit, see \cite{BFNW}, and allows collision between the body and the boundary, see for example \cite{DGHW}, in contrast with the lack of collision in the no-slip case \cite{H}. In \cite{pla:sue}, the authors prove a first result of existence of  weak solutions in the case where $ \Omega = \mathbb{R}^3 $. In the case of a bounded domain $ \Omega $ of $\mathbb{R}^3 $ the existence of  weak solutions has been proven by G\'{e}rard-Varet and Hillairet in \cite{exi:GeH}. 
Their result can be easily adapted to the 2D case, see Theorem \ref{THM:exw} below, which is the 2D counterpart  of Theorem 1 in \cite{exi:GeH}.
Indeed  Theorem \ref{THM:exw} involves a slightly wider set of test functions for which the density property mentioned in Lemma \ref{lem:app} is guaranteed. 

In this paper we prove that the weak solutions are continuous in time with values in $ L^2_{\sigma} $ and satisfy an energy equality, see  Theorem \ref{THE:ene}.
 Finally we prove that the weak solutions are unique, which is the counterpart of \cite{UnGS} for Navier slip-with-friction boundary conditions, see  Theorem \ref{THE:uni}.

To establish the two first properties we rely on a change of variables due to \cite{IW}, see Claim \ref{cla:cov}, and some regularization processes adapted to the body motion, see  \eqref{15}-\eqref{16} (where Lemma \ref{lem:app} is used) and 
Claim \ref{regu}. 
On the other hand to establish  uniqueness we use some maximal regularity for an auxiliary system, see Theorem \ref{STR:sol} below, 
thanks to the $ \mathcal{R}$-boundedness for the Stokes operator with Robin (i.e. Navier slip-with-friction) boundary conditions presented in \cite{shimada}. Another interesting result is the work \cite{STR}, where the authors study the imaginary power of the Stokes operator with some Navier slip-with-friction boundary condition.   
Such technics are useful to extend the theory of strong solutions from Hilbert setting, for which we refer to \cite{WAN}, to $L^p-L^q $ setting, see \cite{str:dem} in the 3D case with no-slip conditions. 
Indeed the argument presented in section \ref{rel} can be implemented to prove similar existence results in both 2D and 3D for the Navier slip-with-friction boundary conditions (using a fixed point argument as in \cite{GGH}).   

Recently  the case where Navier slip-with-friction boundary conditions are prescribed only on the body boundary $\partial{ S}(t) $ and no-slip conditions on $ \partial \Omega $ has been studied:  existence of strong solutions in Hilbert spaces was proven in \cite{STR2}, existence of weak solutions was proven in \cite{exi:wea} and  a result of weak-strong uniqueness  in the 2D case is available in \cite{wea:str}.
Let us also mention \cite{J} where the author proves the small-time global controllability of the solid motion (position and velocity) in the case where $ \Omega = \mathbb{R}^2 $ and Navier slip-with-friction boundary condition are prescribed on the solid boundary. 

\begin{figure}
\centering
\includegraphics[scale = 0.75]{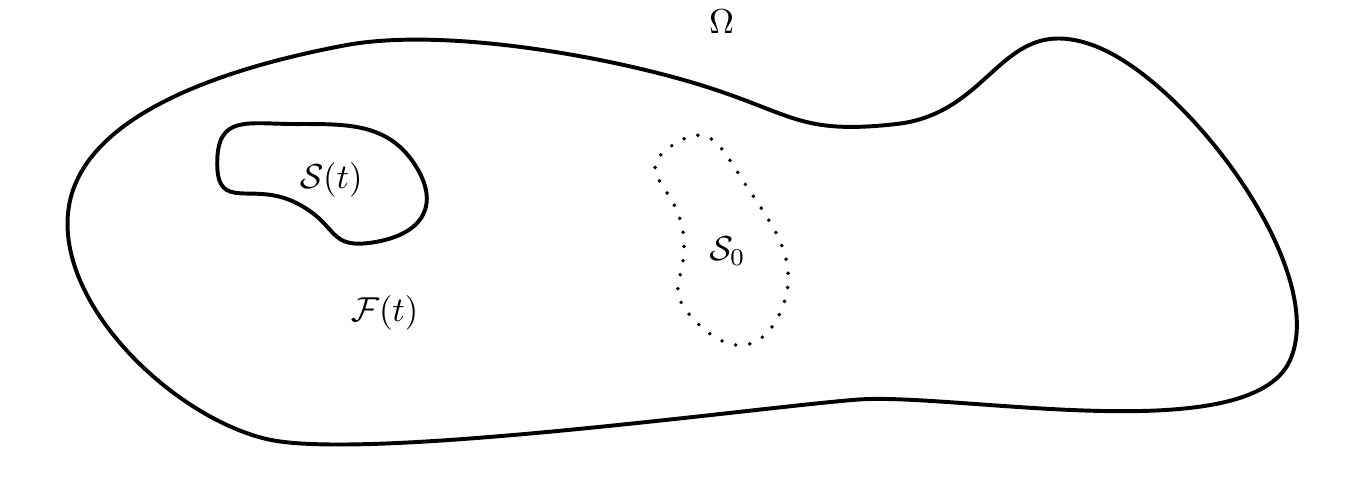}
\caption{The domain $\Omega $ is the union of two time-dependent domains $ \mathcal{F}(t)$ and $\mathcal{S}(t)$.}
\label{domain}
\end{figure}

\section{Setting}

\subsection{The ``viscous incompressible fluid + rigid body'' system}
Let us present the equations which govern the system at stake. Consider $ \Omega \subset \mathbb{R}^2 $ an open set with smooth boundary and consider $\mathcal{S}_0$ a closed, bounded, connected and simply connected subset of the plane compactly contained in $ \Omega $ with smooth boundary.
We assume that the body initially occupies the domain $\mathcal{S}_0$, has density $ \rho_{ \mathcal{S}_0}$ and rigidly moves so that at  time $t$  it occupies an isometric  domain denoted by $\mathcal{S}(t) \subset \Omega $.
We set $\mathcal{F} (t) = \Omega  \setminus \mathcal{S}(t) $ the domain occupied by the fluid  at  time $t$ starting from the initial domain $\mathcal{F}_{0}  = \Omega \setminus {\mathcal{S}}_{0} $. \par
The equations modelling the dynamics of the system  then read
\begin{eqnarray}
\displaystyle \frac{\partial u }{\partial t}+(u  \cdot\nabla)u   + \nabla p =\Delta u  && \text{for} \ x\in \mathcal{F} (t), \label{NS1-2d}\\
\div u   = 0 && \text{for} \ x\in \mathcal{F}(t) , \label{NS2-2d} \\[5pt]
u  \cdot n =   u_\mathcal{S} \cdot n && \text{for}  \  x\in \partial \mathcal{S}  (t),   \label{NS3-2d} \\
\label{NS4-2d}
(D(u) n )  \cdot \tau = - \alpha (u - u_\mathcal{S}) \cdot \tau   && \text{for} \ x\in \partial \mathcal{S}  (t),  \\[5pt]
\label{Solide1}
u\cdot n  = 0 && \text{for} \ x \in \partial \Omega, \\
(D(u) n )  \cdot \tau = - \alpha u \cdot \tau   && \text{for} \ x\in \partial \Omega, \label{Solide2} \\[5pt]
m h'' (t) &=& -  \int_{\partial  \mathcal{S} (t)} \Sigma  n \, ds ,  \label{Solide1-2d} \\
\mathcal{J} r' (t) &= & -  \int_{\partial  \mathcal{S} (t)} (x-  h (t) )^\perp \cdot \Sigma  n   \, ds , \label{Solide2-2d} \\[5pt]
u |_{t= 0} = u_0 & &  \text{for}  \  x\in  \mathcal{F}_0 ,  \label{NSci2-2d} \\
h (0)= h_0 , \ h' (0)=  \ell_0 , & &   r  (0)=  r _0.  \label{Solideci-2d}
\end{eqnarray}
Here $u=(u_1,u_2)$ and $p$ denote the velocity and pressure fields,  $n $ and $\tau$ are respectively the unit outwards normal and counterclockwise tangent vectors to the boundary of the fluid domain, $\alpha \geq 0 $ is a material constant (the friction
coefficient).
On the other hand $m$ and $ \mathcal{J}$ denote respectively the mass and the moment of inertia of the body  while the fluid  is supposed to be  homogeneous of density $1$ and the viscosity coefficient of the fluid is set equal to $1$, to simplify the notations.
The Cauchy stress tensor is defined by
$\Sigma = -p \Id_2 + 2  D(u) $, where $D(u) $ is the deformation tensor  defined by $2 D(u) = ( ( \partial_{j} u_{i} +  \partial_{i} u_{j} ) )_{1 \leqslant i,j \leqslant  2} $.
When $x=(x_1,x_2)$ the notation $x^\perp $ stands for $x^\perp =( -x_2 , x_1 )$,  $h'(t)$
is the velocity of the center of mass  $h (t)$ of the body and $r(t)$ denotes the angular velocity of the rigid body. We  denote by $u_{{\mathcal S}}$ the velocity of the body:
$u_\mathcal{S} (t,x) =   h' (t)+ r (t) (x-  h (t))^\perp $.
We assume from now on that $h_{0}= 0$.
Since $\mathcal{S}(t)$ is obtained from ${\mathcal S}_{0}$ by a rigid motion, there exists a rotation matrix
\begin{eqnarray}
\label{Q:teta}
Q (t)=
\begin{bmatrix}
\cos  \theta (t) & - \sin \theta (t) \\
\sin  \theta (t) & \cos  \theta (t)
\end{bmatrix},
\end{eqnarray}
such that  the position $\eta (t,x) \in \mathcal{S} (t)$  at the time $t$ of the point fixed to the body with an initial position $x$ is $ h (t) + Q (t)x $.
The angle $\theta$ satisfies $ \theta'(t) = r (t), $ and we choose $\theta (t)$ such that $\theta (0) =  0$.
We note also that given $ h'(t) $ and $ \theta'(t) $, we can reconstruct the position of the body trough the formula
\begin{equation*}
\mathcal{S}^{h',\theta'}(t) = \left\{ x = Q(t)y+h(t) \quad \text{ for } y \in \mathcal{S}_0 \right\},  \text{ where }
h(t) =  \int_{0}^{t} h'(t) dt \ \text{ and } \
\theta(t)=  \int_{0}^{t} \theta'(t) dt, 
\end{equation*}
and $ Q(t) $ is obtain by $ \theta $ via (\ref{Q:teta}). In the same spirit if the motion of the body is described by $ h'(t) $ and $ \theta'(t) $, then
\begin{equation*}
\rho_{\mathcal{S}(t)}(t,x) = \rho_{\mathcal{S}_0}(Q^T(t)(x-h(t))) \quad \text{ for any } x \in \mathcal{S}(t). 
\end{equation*}  

\subsection{Definition of weak solutions}
We now present the definition of weak solution and the existence result from \cite{exi:GeH}. Let $ \mathcal{O} $ be an open subset of $ \mathbb{R}^2 $ with Lipschitz boundary then we define
\begin{gather*}   
\mathcal{D}_{\sigma}(\mathcal{O}) = \{ \varphi \in \mathcal{D}(\mathcal{O}), \div \varphi = 0 \},  \quad 
L^2_{\sigma}(\mathcal{O})= \text{ the closure of } \mathcal{D}_{\sigma}(\mathcal{O}) \text{ in } L^2(\mathcal{O}), \quad H^1_{\sigma}(\mathcal{O}) = H^1(\mathcal{O}) \cap L^2_{\sigma}(\mathcal{O}) .
\end{gather*}
We also define the finite dimensional space of rigid vector fields in $ \mathbb{R}^2 $
\begin{equation}
\mathcal{R} = \{ \varphi_S, \quad \varphi_s(x) = V+\omega x^{\perp}, \quad \text{for some } V \in \mathbb{R}^2, \omega \in \mathbb{R}\},
\end{equation}
and the space of initial data
\begin{align*}
\mathcal{H}_{S_0} = \{ v \in L^2_{\sigma}(\Omega), \text{ there exists } v_{F} \in L^2_{\sigma}(\Omega)\text{, } v_S \in \mathcal{R} \text{ such that } v = v_{F} \text{ on } \mathcal{F}_0 \text{, } v = v_{S} \text{ on } \mathcal{S}_0 \}, 
\end{align*}
with norm 
\begin{equation*}
\|v\|_{\mathcal{H}_{S_0}}^2 = \int_{\mathcal{F}_0} v_F^2 dx + \int_{\mathcal{S}_0} \rho_{\mathcal{S}_0} v_S^2 dx  = \|v\|_{L^2(\mathcal{F}_0)} +m|l_v|^2+\mathcal{J}r_v^2, 
\end{equation*}
where $ l_v $ and $ r_v $ are related to $ v_S $ via $ v_S(t,x) = l_v(t) + ( x-h_0)^{\perp} r_v(t) $, with $ h_0 = 0 $ the center of mass of $ \mathcal{S}_0 $. We define for any $ T > 0 $ the space of solutions
\begin{align*}
\mathcal{V}_T = \big\{ & u \in L^{\infty}(0,T; L^{2}_{\sigma}(\Omega))\text{, there exists } u_F\in L^2(0,T;H^1_{\sigma}(\Omega))\text{, } u_S\in L^{\infty}(0,T; \mathcal{R}) \\
& \text{such that } u(t,.) = u_F(t,.) \text{ on } \mathcal{F}(t), u(t,.)= u_S(t,.) \text{ on } \mathcal{S}(t)\text{, for a.e. } t \in [0,T) \big\},
\end{align*}
Note that for any $ \varphi \in \mathcal{V}_t $ we have $ \varphi_f(t,.)\cdot n = \varphi_S(t,.)\cdot n $ on $ \partial \mathcal{S}(t)$; analogously we define  
\begin{align*}
\mathcal{W}_T = \big\{ & u \in L^{\infty}(0,T; L^{2}_{\sigma}(\Omega))\text{, there exists } u_F\in W^{1,\infty}(0,T;H^1_{\sigma}(\Omega))\text{, } u_S\in W^{1,\infty}(0,T; \mathcal{R}) \\
& \text{such that } u(t,.) = u_F(t,.) \text{ on } \mathcal{F}(t), u(t,.)= u_S(t,.) \text{ on } \mathcal{S}(t)\text{, for a.e. } t \in [0,T] \big\}.
\end{align*} 
Moreover we denote by $ \mathcal{W}_{0,T} $ the set of $ \varphi $ in $ \mathcal{W}_{T} $ such that $ \varphi \equiv 0 $ in a neighbourhood of $T$.
We are now able to give the definition of weak solution.

\begin{Definition}
\label{def:wes}
Let $ \Omega \subset \mathbb{R}^2 $ an open set with smooth boundary, $\mathcal{S}_0$ a closed, bounded, connected and simply connected subset of $ \Omega $, with smooth boundary and $ u_0 \in \mathcal{H}_{S_0}$. A weak solution of (\ref{NS1-2d})-(\ref{Solideci-2d}) on $[0,T) $, associated with the initial data $( \mathcal{S}_0,u_{S_0}) $ is a couple $ ( \mathcal{S}, u) $ satisfying 
\begin{itemize}
\item $\mathcal{S}(t) \subset \Omega $ is a bounded domain of $ \mathbb{R}^2 $ for all $ t \in [0,T) $, such that $\chi_S(t,x) = 1_{S(t)}(x) \in L^{\infty}((0,T)\times \Omega)$,
\item $ u $ belongs to the space $ \mathcal{V}_T $ where $ \mathcal{F}(t) = \Omega \setminus \overline{\mathcal{S}(t)}$ for all $ t \in [0,T) $,
\item for any $ \varphi \in \mathcal{W}_{0,T} $, it holds
\begin{gather}
-\int_0^T \int_{\mathcal{F}(t)}u_{F}\cdot \partial_t \varphi_F dx dt -\int_0^T\int_{\mathcal{S}(t)}\rho_S u_S\cdot \partial_t \varphi_S dx dt -\int_0^T\int_{\mathcal{F}(t)}  u_F \otimes u_F : \nabla \varphi_F dx dt \nonumber \\
+ 2 \int_0^T \int_{\mathcal{F}(t)} Du_F :D\varphi_F dx dt + 2 \alpha \int_0^t \int_{\partial \Omega} u_F \cdot \varphi_F ds dt + 2  \alpha \int_0^T \int_{\partial \mathcal{S}(t)} (u_F-u_S)\cdot (\varphi_F -\varphi_S) ds dt  \label{wek:for} \\
= \int_{\mathcal{F}(0)} u_{F,0}\cdot \varphi_F|_{t = 0} dx +\int_{\mathcal{S}(0)} \rho_S u_{S,0}\cdot\varphi_S|_{t=0} dx. \nonumber 
\end{gather}
In what follow we sometimes do not write explicitly the variables in which the integrations are made to shorten the notation.
\item $\mathcal{S} $ is transported by the rigid vector fields $ u_S $, i.e. for any $ \psi \in C^{\infty}_c([0,T); \mathcal{D}(\overline{\Omega})) $, it holds
\begin{equation}
\label{solid:motion}
\int_0^T\int_{\mathcal{S}(t)} \partial_t \psi + \int_0^T \int_{\mathcal{S}(t)} u_S\cdot \nabla \psi = -\int_{\mathcal{S}_0}\psi|_{t=0}.
\end{equation}
\end{itemize}
\end{Definition} 

The formal derivation of equations (\ref{wek:for})-(\ref{solid:motion}) from (\ref{NS1-2d})-(\ref{Solideci-2d}) is presented in Section 1 of \cite{exi:GeH}. Equation (\ref{solid:motion}) ensures that the solid is transported via the rigid vector field $ v_S $ and equation (\ref{wek:for}) is a weak version of the equations (\ref{NS1-2d})-(\ref{Solideci-2d}), in fact the sum of the first and the third term of (\ref{wek:for}) correspond to the convective derivative in the equation (\ref{NS1-2d}), the sum of the second and the fourth term of (\ref{wek:for}) corresponds to the pressure and the viscous term in (\ref{NS1-2d}) together with the Newton equations (\ref{Solide1-2d})-(\ref{Solide2-2d}) associated with the solid motion, the fifth and the sixth term correspond respectively to the boundary condition (\ref{Solide1})-(\ref{Solide2}) and (\ref{NS3-2d})-(\ref{NS4-2d}), and finally the last line corresponds to the initial condition (\ref{NSci2-2d})-(\ref{Solideci-2d}).

\subsection{An existence  result}
Let us conclude this section with recalling the existence result  from \cite{exi:GeH}.

\begin{Theorem}[Theorem 1 of \cite{exi:GeH}]
\label{THM:exw}
Let $ \Omega \subset \mathbb{R}^2 $ an open, bounded, connected set with smooth boundary, $\mathcal{S}_0$ a closed, bounded, connected and simply connected subset of $ \Omega $ with smooth boundary and $ u_0 \in\ \mathcal{H}_{S_0}$. There exists a weak solution $(\mathcal{S},u)$ to the problem (\ref{NS1-2d})-(\ref{Solideci-2d}) with initial data $ (\mathcal{S}_0, u_0) $ for some $ T > 0 $. Moreover either $ T = +\infty $ and $ \mathcal{S}(t) \Subset \Omega $  for any $ t \geq 0 $  or $ T < +\infty $ and it holds  $ \mathcal{S}(t) \Subset \Omega \text{ for } t \in [0,T) $ and $ \dist\left(\mathcal{S}(t),\partial \Omega\right) \to 0$ as $ t \to T^{-}$.
\end{Theorem}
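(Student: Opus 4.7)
The strategy is to mirror the proof of Theorem~1 in \cite{exi:GeH}, whose 3D argument transfers essentially verbatim to 2D; the only adaptation concerns the class of test functions $\mathcal{W}_{0,T}$ in Definition~\ref{def:wes}, which is slightly larger than the one used in \cite{exi:GeH}, so the density property of Lemma~\ref{lem:app} must be invoked at the passage to the limit. I would first build approximate solutions by a coupled Galerkin/regularization scheme: mollify in time the rigid velocity $((h^n)',(\theta^n)')$ driving the evolution of $\mathcal{S}^n(t)$ (so that the approximate geometry is smooth in time), and project the fluid equation on $\mathcal{F}^n(t)$ onto a finite-dimensional subspace $V_n \subset \mathcal{V}_{T^*}$ adapted to the (known) approximate geometry. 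Coupled with the Newton ODEs for the body, this forms a locally Lipschitz system to which Cauchy--Lipschitz applies on a short time interval.

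Testing the approximation against itself and exploiting $\div u^n=0$, $u^n\cdot n=u^n_{\mathcal S}\cdot n$ on $\partial\mathcal{S}^n(t)$ and $u^n\cdot n=0$ on $\partial\Omega$ cancels the convective contribution and yields the energy identity
\begin{equation*}
\tfrac12\|u^n(t)\|^2_{\mathcal{H}_{S_0}} + 2\int_0^t\!\!\int_{\mathcal{F}^n(s)}|Du^n|^2 + 2\alpha\int_0^t\!\!\int_{\partial\Omega}|u^n|^2 + 2\alpha\int_0^t\!\!\int_{\partial\mathcal{S}^n(s)}|u^n_F-u^n_S|^2 = \tfrac12\|u_0\|^2_{\mathcal{H}_{S_0}},
\end{equation*}
which, combined with a Korn--Poincar\'e inequality on the moving geometry, delivers uniform bounds for $u^n$ in $L^\infty(0,T^*;L^2_\sigma(\Omega))\cap L^2(0,T^*;H^1_\sigma(\Omega))$ and for $(h^n,\theta^n)$ in $W^{1,\infty}(0,T^*)$, valid as long as the approximate body stays at a positive distance from $\partial\Omega$. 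This simultaneously shows that the finite-dimensional ODE does not blow up before the approximate collision time.

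Passing to the limit, Ascoli--Arzel\`a gives convergence of $(h^n,\theta^n)$ in $C^0$ and of $\chi_{\mathcal{S}^n}$ in $L^p$ for every $p<\infty$, while an Aubin--Lions-type argument, adapted to the moving geometry by composing with the rigid change of variables attached to $\mathcal{S}^n(t)$, produces strong $L^2_{\mathrm{loc}}$ convergence of $u^n$; this suffices to handle the quadratic term $u^n_F\otimes u^n_F:\nabla\varphi_F$. One first passes to the limit in the weak formulation \eqref{wek:for} tested against $\varphi\in V_n$, and the density result of Lemma~\ref{lem:app} upgrades the limiting identity to every $\varphi\in\mathcal{W}_{0,T^*}$; equation \eqref{solid:motion} is preserved since the indicators converge strongly. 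Iterating and setting $T$ to be the supremum of times on which the body remains compactly included in $\Omega$ produces the announced dichotomy. The main obstacle is the handling of the moving domain when passing to the limit: the approximating test functions and the time-derivative estimate needed for compactness must respect both the divergence-free constraint in the fluid and the rigid structure on the solid as the body evolves, which is precisely what Lemma~\ref{lem:app} and the change of variables of Claim~\ref{cla:cov} are designed to provide in the 2D functional framework of Definition~\ref{def:wes}.
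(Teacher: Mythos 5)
Your outline of the construction of approximate solutions, the energy estimate and the compactness argument is a reasonable paraphrase of \cite{exi:GeH}, and the paper indeed takes that part for granted. The genuine gap is in your final step, where you claim that ``the density result of Lemma~\ref{lem:app} upgrades the limiting identity to every $\varphi\in\mathcal{W}_{0,T^*}$.'' Lemma~\ref{lem:app} asserts density of $\mathcal{W}_{\tau}$ in $\mathcal{E}_{\tau}$, i.e.\ in an $L^2_t H^1_x$-type topology with no control whatsoever on $\partial_t\varphi$; but the weak formulation \eqref{wek:for} contains the terms $\int u_F\cdot\partial_t\varphi_F$ and $\int \rho_S u_S\cdot\partial_t\varphi_S$, so convergence in $\mathcal{E}_{\tau}$ of a sequence of test functions does not allow you to pass to the limit in the identity. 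The direction is also wrong: to enlarge the test class \emph{to} $\mathcal{W}_{0,T}$ starting from an identity known for smoother test functions, you would need to approximate a given $\varphi\in\mathcal{W}_{0,T}$ by admissible (smooth-in-space) test functions with convergence of $\varphi$, $\nabla\varphi$, the traces, \emph{and} the material derivative; Lemma~\ref{lem:app} instead regularizes in time and produces functions that are only $H^1$ in space, so it cannot serve this purpose. In the paper the role of Lemma~\ref{lem:app} is entirely different (it is used later, to justify testing with $u$ itself in the proof of the energy equality), and the appendix states explicitly that the difficulty in extending the test class is the lack of regularity in \emph{space}, not in time.

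What the paper actually does is prove a quantitative approximation claim (the analogue of Proposition~12 of \cite{exi:GeH}): for each $\varphi\in\mathcal{W}_{\tau}$ it builds test functions $\varphi^n=(1-\chi^n_S)\varphi_F+\chi^n_S\varphi^n_S$ admissible for the $n$-th approximate problem, by pulling $\varphi_S,\varphi_F$ back along the flow $\phi_n$ of $P^n_S u^n$, gluing them across a layer of width $n^{-\alpha}$ around $\partial\mathcal{S}_0$ via a cut-off $\chi(n^{\alpha}z)$, and adding a divergence correction; it then establishes rates for $\|\varphi^n-\varphi\|_{C([0,\tau];L^p)}$, for $\|\varphi^n\|_{C([0,\tau];H^1)}$ and for $(\partial_t+P^n_S u^n\cdot\nabla)(\varphi^n-\varphi)$, which is exactly what is needed to pass to the limit in the approximate weak formulations against a \emph{fixed} $\varphi\in\mathcal{W}_{0,T}$. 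This also addresses the point you gloss over when you say one ``first passes to the limit tested against $\varphi\in V_n$'': the admissible test functions depend on the approximate geometry $\mathcal{S}^n(t)$, so the whole issue is to produce, for a limiting test function, a compatible sequence along the approximation. Without this claim (or an equivalent substitute), your argument only yields the weak formulation for the smaller class $\mathcal{T}_{0,T}$ of \cite{exi:GeH}, not for $\mathcal{W}_{0,T}$ as required by Definition~\ref{def:wes}.
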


The theorem  above  states that weak solutions exist up to collision, in fact by Definition \ref{def:wes} we have that the solid motion is continuous in time and the condition $ \mathcal{S}(t) \Subset \Omega $ implies that  $ \dist\left(\mathcal{S}(t),\partial \Omega\right) > 0 $, this means that the solid never touch the boundary until the final time $ t = T $, when  $ \dist\left(\mathcal{S}(T),\partial \Omega\right) =  0 $.
\bigskip

Theorem \ref{THM:exw} differs from Theorem 1 of \cite{exi:GeH} in two points. The first one is that Theorem \ref{THM:exw} deals with the 2D case whereas  Theorem 1 of \cite{exi:GeH} deals with the 3D case. Indeed this simplifies the proof. The second difference is the set of test functions used in (\ref{wek:for}), in fact in (\ref{wek:for}) we substitute  the space 
\begin{align}
\label{def-T}
\mathcal{T}_{0,T} = \{ & \varphi \in C^0_c([0,T); L^2_{\sigma}(\Omega)), \text{ there exists } \varphi_{F} \in C^{\infty}([0,T); \mathcal{D}_{\sigma}(\overline{\Omega}))\text{, }  \varphi_S \in C^{\infty}([0,T); \mathcal{R} )  \\ \nonumber &  \text{ such that } \varphi(t,.) = \varphi_{F}(t,.) \text{ on } \mathcal{F}(t)\text{, } \varphi(t,.) = \varphi_{S}(t,.) \text{ on } \mathcal{S}(t), \text{ for all }  t \in [0,T) \}. 
\end{align}
by $ \mathcal{W}_{0,T}$, but the weak solutions constructed in \cite{exi:GeH} satisfy (\ref{wek:for}) for any test function in $ \mathcal{W}_{0,T}$ in the 2D case. Moreover observe that there is no energy inequality in Definition \ref{def:wes}. Indeed in Theorem \ref{THE:ene} we are going to prove that any solution satisfies an energy equality. 
We give a sketch of the part of the proof that differs from the one in \cite{exi:GeH} in the appendix. 
%
\section{Main results}

In this section we present the two main results of this paper. The first one is that any weak solution from Definition \ref{def:wes} is continuous with values in $ L^2_{\sigma}(\Omega) $ and satisfies an energy equality.

\begin{Theorem}
\label{THE:ene}
Let $ \Omega \subset \mathbb{R}^2 $ an open, bounded set with smooth boundary, $\mathcal{S}_0$ a closed, bounded, connected and simply connected subset of $ \Omega $ with smooth boundary, $ u_0 \in \mathcal{H}_{S_0} $, and $ (\mathcal{S}, u) $ a weak solution of (\ref{NS1-2d})-(\ref{Solideci-2d}) with initial data $ (\mathcal{S}_0, u_0) $ for some $ T > 0 $. Then
\begin{equation}
\label{con:int}
u\in C^0\left([0,T); L^2_{\sigma}(\Omega)\right).
\end{equation}
Moreover, for every $ \tau \in [0,T)$, the following energy equality holds: 
\begin{align}
 \frac{1}{2}\int_{\mathcal{F}(\tau)}|u_F(\tau,.)|^2 + \frac{1}{2} \int_{\mathcal{S}(\tau)}\rho_S |u_S(\tau,.)|^2 & +2  \int_0^{\tau}\int_{\mathcal{F}(t)}|Du_F|^2 + 2  \alpha \int_0^{\tau} \int_{\partial \Omega} |u_F|^2 \nonumber \\ & + 2  \alpha \int_0^{\tau} \int_{\partial \mathcal{S}(t) } |u_F-u_S|^2 = \frac{1}{2}\int_{\mathcal{F}_0} |u_{F,0}|^2 + \frac{1}{2}\int_{\mathcal{S}_0} \rho_S|u_{0,S}|^2.  \label{energy}
\end{align}
\end{Theorem}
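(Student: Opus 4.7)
The plan is to derive the energy equality \eqref{energy} by using $u$ itself as a test function in the weak formulation \eqref{wek:for}, which is the classical Leray strategy adapted to the moving-solid geometry. The main obstruction is that $u$ only lies in $\mathcal{V}_T$, while admissible test functions must lie in $\mathcal{W}_{0,T}$: they must have $W^{1,\infty}$ time regularity and coincide with a rigid motion on the moving solid $\mathcal{S}(t)$. A naive mollification in time is therefore not available, since $\mathcal{F}(t)$ and $\mathcal{S}(t)$ themselves move. Once the energy identity is in hand, the continuity \eqref{con:int} will follow from the combination of weak continuity (read off from the weak formulation) and norm continuity (from the energy identity).

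To construct an admissible regularization of $u$, I would first invoke the Inoue--Wakimoto change of variables from Claim \ref{cla:cov}, which pulls back the fluid and solid to the fixed reference geometry $\mathcal{F}_0\cup\mathcal{S}_0$; in these fixed coordinates a standard mollification in time is available. Pushing forward, and combining with the density result Lemma \ref{lem:app} together with the body-adapted regularization procedures \eqref{15}--\eqref{16} and Claim \ref{regu}, one obtains a family $u_\eps \in \mathcal{W}_T$ which is divergence free in $\mathcal{F}(t)$, coincides with a rigid field on $\mathcal{S}(t)$, satisfies $u_\eps \cdot n = (u_\eps)_S \cdot n$ on $\partial \mathcal{S}(t)$ and $u_\eps \cdot n = 0$ on $\partial \Omega$, and converges to $u$ in a topology strong enough for the nonlinear and boundary terms (e.g.\ strong $L^2_t H^1_x$ on the fluid part and strong $L^\infty_t$ on the solid part). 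A smooth time cutoff that vanishes near $\tau$ then places $u_\eps$ in $\mathcal{W}_{0,T}$. This construction is the main technical obstacle: one must simultaneously preserve all the structural constraints and retain enough convergence to identify each limit.

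Inserting the regularized test function into \eqref{wek:for} and letting $\eps\to 0$ with the cutoff approaching $\mathbf{1}_{[0,\tau]}$, the time-derivative contribution $-\int\int u\cdot \partial_t \varphi$ collapses to $\tfrac12\bigl(\|u(\tau)\|^2 - \|u_0\|^2\bigr)$ in the weighted norm of the statement, where the shift coming from the moving domain is absorbed using the solid transport identity \eqref{solid:motion} applied to quadratic quantities in $u_S$ and $\rho_S$. The convective term vanishes by the usual 2D divergence-free integration by parts: it equals $\tfrac12 \int\int u_F\cdot \nabla |u_F|^2$, whose boundary contribution on $\partial \Omega$ vanishes from $u_F\cdot n = 0$, while on $\partial \mathcal{S}(t)$ the identity $u_F\cdot n = u_S\cdot n$ causes it to exactly cancel against the rate of change of the solid's kinetic energy. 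The viscous and friction terms are linear in $\varphi$ and pass directly to the limit, reproducing the dissipation side of \eqref{energy}.

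Finally, for \eqref{con:int}, the weak formulation shows that for every fixed $\varphi$ in the admissible test space the pairing $t \mapsto \int_{\mathcal{F}(t)} u_F\cdot \varphi_F + \int_{\mathcal{S}(t)} \rho_S u_S\cdot \varphi_S$ is continuous on $[0,T)$; together with the density supplied by Lemma \ref{lem:app}, this gives weak continuity of $u$ into $L^2_\sigma(\Omega)$. Combined with continuity of the norm read off from \eqref{energy}, the Hilbert-space fact that weak convergence plus norm convergence implies strong convergence yields \eqref{con:int}.
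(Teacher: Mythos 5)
Your overall strategy is the one the paper follows: pull back by the solid-fixing diffeomorphism of Claim \ref{cla:cov}, mollify in time, push forward to get $u_\eps\in\mathcal{W}_\tau$, test the weak formulation with $u_\eps$, and pass to the limit. But the decisive step is asserted rather than proved, and it is precisely where the difficulty sits: the claim that $-\int_0^\tau\int u\cdot\partial_t u_\eps$ ``collapses'' to $\tfrac12\bigl(\|u(\tau)\|^2-\|u(0)\|^2\bigr)$. The obstruction is quantitative: $\partial_t u_\eps$ is of order $\eps^{-1}$ while $u-u_\eps\to0$ in $L^2$ with no rate, so the error $\int (u-u_\eps)\cdot\partial_t u_\eps$ cannot be discarded, and one cannot simply replace $u$ by $u_\eps$ to write the integrand as $\tfrac12\partial_t|u_\eps|^2$. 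Your appeal to the transport identity \eqref{solid:motion} does not address this — that identity concerns scalar test functions and the motion of $\mathcal{S}(t)$, not the product of a merely $L^2$ function with an $\eps^{-1}$-large time derivative. The paper's resolution (Claim \ref{regu}) is a symmetrization: using that $\eta'$ is odd, the mollifier is transferred from $u_\eps$ onto $u$, which produces a second approximant $U_\eps$ (defined in \eqref{def:UE}, a double regularization conjugated by $X$, $Y$ and $Q$) together with a collection of commutator terms coming from the time dependence of the change of variables; these are shown to cancel via identities such as $\partial_t\bigl(\nabla X(t,Y(t,x))\nabla Y(t,x)\bigr)=0$ and $\partial_t(QQ^T)=0$, the surviving piece being exactly the convective correction $-(u\cdot\nabla)u\cdot u$. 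The resulting term $\langle \frac{D}{dt}u, U_\eps\rangle$ is then evaluated by feeding $U_\eps$ back into the weak formulation, which is legitimate because \eqref{wek:sho} shows $\frac{D}{dt}u\in\mathcal{E}_\tau^{-1}$ and $U_\eps\to u$ in $\mathcal{E}_\tau$ (Lemma \ref{lem:app}). This double use of the weak formulation — once with $\varphi=u_\eps$, once with $\varphi=U_\eps$ — is the missing idea in your proposal.

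Two smaller points. First, you locate the ``main technical obstacle'' in the construction of $u_\eps$; in fact that construction is the comparatively routine part (the paper's Lemma \ref{lem:app}), while the hard part is the limit identification above. Second, your continuity argument has a circularity: the energy identity obtained by testing holds a priori only for a.e.\ $\tau$, so ``continuity of the norm read off from \eqref{energy}'' is not yet available at every time; the paper instead proves \eqref{con:int} directly by showing $u_\eps$ is Cauchy in $C^0([0,\tau];L^2_\sigma(\Omega))$ (a rerun of the same symmetrization with two mollification parameters), and only then upgrades \eqref{energy} from a.e.\ $\tau$ to every $\tau$. Your weak-continuity-plus-norm-convergence route can likely be repaired, but as written it uses the all-$\tau$ energy equality before it is established.
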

Note that the energy equality holds for every time, not only almost everywhere.
\bigskip

The second main result of this paper is to prove that weak solutions are actually unique.

\begin{Theorem}
\label{THE:uni}
Let $ \Omega \subset \mathbb{R}^2 $ an open set with smooth boundary, $ \mathcal{S}_0$ be a closed, bounded, connected and simply connected subset of $ \Omega $ with smooth boundary, $ u_0 \in \mathcal{H}_{S_0} $. 
Let $ (\mathcal{S},u) $  a weak solution of (\ref{NS1-2d})-(\ref{Solideci-2d}) with initial data $ (\mathcal{S}_0, u_0) $ for some $ T > 0 $. 
Then  $ (\mathcal{S},u) $ is the unique weak solution to (\ref{NS1-2d})-(\ref{Solideci-2d}) with initial data $ (\mathcal{S}_0, u_0)$ in $ [0,T) $. 
\end{Theorem}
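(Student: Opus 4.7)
The strategy is weak--strong uniqueness, relying on the maximal regularity result (Theorem \ref{STR:sol}) to produce a sufficiently smooth comparison solution, and on the change of variables of Claim \ref{cla:cov} to reconcile the two a priori distinct solid trajectories. Let $(\mathcal{S}_i,u^i)$, $i=1,2$, be two weak solutions associated with the same initial data $(\mathcal{S}_0,u_0)$.

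First, I would bring both solutions into a common reference geometry. Via a smooth volume-preserving diffeomorphism $\Phi(t,\cdot) : \Omega\to \Omega$, coinciding with the identity in a neighbourhood of $\partial\Omega$, acting as a rigid motion in a neighbourhood of $\mathcal{S}_1(t)$, and mapping $\mathcal{S}_1(t)$ onto $\mathcal{S}_2(t)$ (Claim \ref{cla:cov}), the solution $u^2$ is pulled back to a field $\widetilde u^2$ on the geometry $(\mathcal{F}_1(t),\mathcal{S}_1(t))$. The transported field solves a perturbed Navier--Stokes--Newton system whose extra terms are controlled by $h_1'-h_2'$, $\theta_1'-\theta_2'$ and by spatial derivatives of $\Phi-\operatorname{Id}$.

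Second, using the maximal $L^p$--$L^q$ regularity of Theorem \ref{STR:sol}, built on the $\mathcal{R}$-boundedness of the Stokes--Robin operator from \cite{shimada}, I would construct on a small interval $[0,T^*]$ a strong solution $U$ of an auxiliary linear Stokes--Robin--Newton system posed in the moving geometry of $\mathcal{S}_1$, with data extracted from $(u^1,h_1,\theta_1)$. For well-chosen $p,q>2$ this delivers $U\in W^{1,p}(0,T^*;L^q)\cap L^p(0,T^*;W^{2,q})$, hence in particular $\nabla U\in L^1(0,T^*;L^\infty)$. By Lemma \ref{lem:app}, $U$ (and its pushforward through $\Phi$) is an admissible test function in $\mathcal{W}_{0,T^*}$ for both $u^1$ and $u^2$. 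Plugging these test functions into (\ref{wek:for}), subtracting, and setting $w := u^1-\widetilde u^2$, I obtain an identity of the form
\begin{equation*}
\frac{1}{2}\|w(\tau)\|_{\mathcal{H}_{\mathcal{S}_1(\tau)}}^2 + 2\int_0^\tau \|Dw\|_{L^2}^2\,dt + (\text{Navier friction, correct sign}) \le C\int_0^\tau \bigl(1+\|\nabla U(t)\|_{L^\infty}\bigr)\|w(t)\|_{\mathcal{H}_{\mathcal{S}_1(t)}}^2\,dt,
\end{equation*}
where the 2D Ladyzhenskaya inequality $\|v\|_{L^4}^2\le C\|v\|_{L^2}\|\nabla v\|_{L^2}$ and the $L^\infty$-control of $\nabla U$ are used to absorb the convective nonlinearity into the viscous dissipation. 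Gronwall then forces $w\equiv 0$ on $[0,T^*]$, hence $(\mathcal{S}_1,u^1)=(\mathcal{S}_2,u^2)$ there; extension to $[0,T)$ follows by a standard continuation argument, the energy equality of Theorem \ref{THE:ene} ruling out any finite-time blow-up of the comparison norm short of collision.

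The main obstacle is the interaction between the change of variables and the Navier slip-with-friction condition: the friction term on $\partial\mathcal{S}_i(t)$ depends on the current solid geometry, which is distorted by $\Phi$, so the contribution of this term to the equation for $w$ picks up extra lower-order boundary terms. Arranging for $\Phi$ to coincide with a rigid motion in a collar of $\partial\mathcal{S}_1(t)$ neutralises these terms to leading order, but one must still control the $W^{2,\infty}$-norm of the associated cut-off uniformly away from collisions and match the rotations $Q_1(t),Q_2(t)$ and centres $h_1(t),h_2(t)$ inside the collar; this bookkeeping, together with verifying that the strong solution produced by Theorem \ref{STR:sol} has enough regularity for all boundary integrals to be well-defined, is the delicate core of the argument.
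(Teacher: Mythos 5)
Your skeleton (change of variables matching the two solid trajectories, maximal regularity via Theorem \ref{STR:sol}, Gronwall, continuation) is the right one, but the central analytic step does not work as stated. The proposal hinges on producing a comparison field $U$ with $\nabla U\in L^1(0,T^*;L^\infty)$ by choosing ``well-chosen $p,q>2$'' in the maximal regularity theorem. This is not available here: the initial datum is only in $\mathcal{H}_{S_0}\subset L^2$, and Lemma \ref{lemma1} gives $(u\cdot\nabla)u\in L^{4/3}$ in space-time and nothing better, so the auxiliary system can only be solved in the $L^{4/3}$--$L^{4/3}$ framework of Theorem \ref{STR:sol}. The resulting regularity is $tu\in L^{4/3}(0,\tilde T;W^{2,4/3})$, and in 2D $W^{2,4/3}\hookrightarrow W^{1,4}$ but \emph{not} $W^{1,\infty}$; moreover the time weight $t$ is unavoidable because no strong solution emanates from merely-$L^2$ data at $t=0$. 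A second, related incoherence: you introduce $U$ as a strong solution of an auxiliary \emph{linear} system with data extracted from $u^1$, yet the Gronwall inequality you write is for $w=u^1-\widetilde u^2$ with weight $\|\nabla U\|_{L^\infty}$ --- that is the shape of a weak--strong estimate in which $U$ would have to be one of the two solutions being compared. As set up, the claimed differential inequality does not follow from testing the two weak formulations with $U$.

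The paper's actual route (following Glass--Sueur) avoids both problems. Lemma \ref{reg:PROP} shows that the weak solution \emph{itself} satisfies $tu\in L^{4/3}(W^{2,4/3})$ and $(t\partial_t u, t\nabla p)\in L^{4/3}$, by identifying $tu$ as simultaneously the unique weak and the unique strong solution of the auxiliary system (\ref{tup1})--(\ref{tup2}). One then transports $u_2$ to the geometry of $\mathcal{S}_1$, takes the difference of the two weak formulations, and tests with $\hat u=u_1-\widetilde u_2$ directly (justified in $\mathcal{U}_\tau=L^4\cap\mathcal{E}_\tau$ by the machinery of Claim \ref{regu}). The convective term is handled by the Ladyzhenskaya inequality with the $\varepsilon\|\nabla\hat u\|_{L^2}^2$ piece absorbed into the dissipation --- no $L^\infty$ bound on any gradient is ever needed --- and the commutator terms $\tilde f$ from the change of variables are estimated using only the weighted $L^{4/3}$ norms, producing a Gronwall weight $\mathcal{B}\in L^1(0,\tau)$. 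If you want to salvage your draft, replace the construction of $U$ and the $\|\nabla U\|_{L^\infty}$ weight by this weighted-in-time low-regularity bootstrap; otherwise the argument has a genuine gap.
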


Let us recall that weak-strong uniqueness has been recently proven  in   \cite{wea:str} in the slightly different case where  Navier slip-with-friction boundary conditions are prescribed only on the body boundary $\partial{ S}(t) $ while no-slip conditions are prescribed on the external boundary $ \partial \Omega $.  However Theorem \ref{THE:uni} deals with uniqueness of weak solutions without any regularity assumption of the initial data. 


\section{Introduction to the proof of Theorem \ref{THE:ene}}

In this section we present the main ingredients that we will use in the proof of Theorem \ref{THE:ene}. Let $ ( \mathcal{S},u )$ a weak solution. We start by introducing two spaces:
\begin{align*}
\mathcal{H}_{\tau} = \big\{ & v \in L^{2}(0,\tau; L^{2}_{\sigma}(\Omega))\text{, there exists } v_F\in L^2(0,\tau;L^2_{\sigma}(\Omega))\text{, } v_S\in L^{2}(0,\tau; \mathcal{R}) \\
& \text{such that } v(t,.) = v_F(t,.) \text{ on } \mathcal{F}(t), v(t,.)= v_S(t,.) \text{ on } \mathcal{S}(t)\text{, for a.e. } t \in [0,\tau] \big\},
\end{align*}
with norm $\|v\|_{\mathcal{H}_{\tau}} $ given by
\begin{equation*}
\|v\|_{\mathcal{H}_{\tau}}^2 = \int_0^{\tau}\|v_{F}\|_{L^2(\mathcal{F}(t))}^2 dt +m\int_0^{\tau}|l_v|^2(t)dt +\mathcal{J}\int_0^{\tau}r_v^2(t)dt, 
\end{equation*}
where $ v_S $ is decomposed into $ v_S(t) = l_v(t) +(x-h(t))^{\perp}r_v(t) $ and the space
\begin{align*}
\mathcal{E}_{\tau} = \big\{ & v \in L^{2}(0,\tau; L^{2}_{\sigma}(\Omega))\text{, there exists } v_F\in L^2(0,\tau;H^1_{\sigma}(\Omega))\text{, } v_S\in L^{2}(0,\tau; \mathcal{R}) \\
& \text{such that } v(t,.) = v_F(t,.) \text{ on } \mathcal{F}(t), v(t,.)= v_S(t,.) \text{ on } \mathcal{S}(t)\text{, for a.e. } t \in [0,\tau] \big\},
\end{align*}
with norm $\|v\|_{\mathcal{E}_{\tau}} $ given by
\begin{equation*}
\|v\|_{\mathcal{E}_{\tau}}^2 = \int_0^{\tau}\|v_{F}\|_{H^1(\mathcal{F}(t))}^2 dt +m\int_0^{\tau}|l_v|^2(t)dt +\mathcal{J}\int_0^{\tau}r_v^2(t)dt, 
\end{equation*}
We denote by $\mathcal{E}_{\tau}^{-1} $ the dual space of $\mathcal{E}_{\tau} $ and we embed $\mathcal{E}_{\tau} $ into $\mathcal{E}_{\tau}^{-1} $ through the inner product of $ \mathcal{H}_{\tau} $.

The second ingredient is the convective derivative. Let $ (\mathcal{S}, u) $ a weak solution, for any function in $f \in \mathcal{W}_{\tau} $ we define the convective derivative associated with $ u $ via
\begin{equation*}
\frac{D_u}{dt}f(t,x) = \begin{cases} \partial_t f_F(t,x) + u_F(t,x)\cdot \nabla f_F(t,x) \quad & \text{ for a.e. } \displaystyle{(t,x) \in \bigcup_{t \in [0,\tau] } \{t\} \times \mathcal{F}(t)}, \\[10pt]
\partial_t f_S(t,x) + u_S(t,x)\cdot \nabla f_S(t,x) & \text{ for a.e. } \displaystyle{(t,x) \in \bigcup_{t \in [0,\tau] } \{t\} \times \mathcal{S}(t)},
\end{cases}
\end{equation*} 
In what follows we will not write the dependence on $ u $ of the convective derivative. Moreover note that the second line of the convective derivative can be rewrite in the following way:
\begin{equation*}
\frac{D}{dt}f(t,x) = l'_f(t)+(x-h(t))^{\perp}r'_f(t)-(x-h(t))r_u(t)r_f(t) \quad  \text{ for a.e. } (t,x) \in \bigcup_{t \in [0,\tau] } \{t\} \times \mathcal{S}(t),
\end{equation*} 
where $ f_S $ is decomposed into $ f_S(t,x) = l_f(t)+(x-h(t))^{\perp}r_f(t)$.

\begin{Definition}
Given $ v \in \mathcal{V}_{\tau} $, we say that $ v $ admits a convective derivative 
\begin{equation*}
\frac{D}{dt} v \in \mathcal{E}_{\tau}^{-1}  
\end{equation*}
 if there exists a representative $ v $ and $ F \in \mathcal{E}_{\tau}^{-1} $ such that for almost every $ t_1 < t_2 \in [0,\tau ] $, it holds  
\begin{equation*}
\left\langle F , 1_{(t_1,t_2)}\varphi \right\rangle_{\mathcal{E}_{\tau}^{-1}, \mathcal{E}_{\tau}  }  = (v(t_2),\varphi(t_2))_{\mathcal{H}_{S(t_2)}} - (v(t_1),\varphi(t_1))_{\mathcal{H}_{S(t_1)}} - \int_{t_1}^{t_2}\int_{\mathcal{F}(t)} v \cdot \frac{D}{dt} \varphi dx dt - \int_{t_1}^{t_2}\int_{\mathcal{S}(t)} \rho_{S} v \cdot \frac{D}{dt} \varphi dx dt , 
\end{equation*}
for any $ \varphi \in \mathcal{W}_{\tau} $, where $1_{(t_1,t_2)} $ is the characteristic function on $(t_1,t_2) $. In this case we will denote 
\begin{equation*}
F = \frac{D}{dt} v.
\end{equation*}
\end{Definition}
Note that the above definition is an extension of the classical definition for the space $ \mathcal{E}_{\tau}$ and in what follows we will denote by $ \langle . , .  \rangle $ the pairing $ \langle., . \rangle_{\mathcal{E}_{\tau}^{-1}, \mathcal{E}_{\tau}  } $.

We conclude the section with a density lemma.

\begin{Lemma}
\label{lem:app}
The space $ \mathcal{W}_{\tau} $ is dense in $ \mathcal{E}_{\tau}$.
\end{Lemma}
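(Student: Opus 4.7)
}
The obstruction to approximating $v \in \mathcal{E}_{\tau}$ by elements of $\mathcal{W}_{\tau}$ is the \emph{moving} interface: for a.e.\ $t$ one has $v_F(t,\cdot)\cdot n = v_S(t,\cdot)\cdot n$ on $\partial\mathcal{S}(t)$, and a naive time mollification $\rho_\eps \ast_t v_F$, $\rho_\eps \ast_t v_S$ performed separately would not preserve this compatibility because $\partial\mathcal{S}(t)$ varies with $t$. The plan is therefore to (1) flatten the geometry via Claim \ref{cla:cov}, (2) mollify in time on the fixed geometry, and (3) push forward back to the moving geometry.

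\emph{Step 1 (flattening).} Use the change of variables of Claim \ref{cla:cov} (Inoue--Wakimoto type): there exist diffeomorphisms $\Psi(t,\cdot): \Omega \to \Omega$, $C^\infty$ in space and Lipschitz in $t$ on $[0,\tau]$ (the solid velocity being $L^\infty$ in time on a weak-solution interval), equal to the identity near $\partial\Omega$ and to the rigid motion $y \mapsto h(t) + Q(t)y$ on a neighbourhood of $\mathcal{S}_0$, so that $\Psi(t, \mathcal{S}_0) = \mathcal{S}(t)$. Combining $\Psi$ with a co-rotated Piola transformation, one obtains a bicontinuous isomorphism $v \mapsto \tilde v$ between $\mathcal{E}_{\tau}$ and an analogous space $\tilde{\mathcal{E}}_{\tau}$ on the fixed geometry. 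In the new variables $\tilde v_F \in L^2(0,\tau; H^1_\sigma(\mathcal{F}_0))$, the no-penetration condition on $\partial\Omega$ is preserved because $\Psi = \Id$ there, and the restriction to $\mathcal{S}_0$ becomes a genuine rigid vector field $\tilde v_S(t,y) = \tilde l(t) + y^\perp\, \tilde r(t)$ with $\tilde l, \tilde r \in L^2(0,\tau)$ (this is where applying $Q^T(t)$ is used: $(Q(t)y)^\perp = Q(t) y^\perp$). The same isomorphism sends $\mathcal{W}_{\tau}$ to the corresponding fixed-geometry space $\tilde{\mathcal{W}}_{\tau}$.

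\emph{Step 2 (mollification in time).} Extend $\tilde v$ outside $[0,\tau]$ (by reflection, or after multiplying by a smooth temporal cutoff, with a standard end-point refinement) and set $\tilde v_\eps := \rho_\eps \ast_t \tilde v$ with $\rho_\eps$ a standard temporal mollifier. Since convolution in $t$ commutes with all purely spatial differential and trace operators, $\tilde v_\eps$ remains divergence-free on $\Omega$ and tangent to $\partial\Omega$, while on the \emph{fixed} set $\mathcal{S}_0$ it equals $\tilde l_\eps(t) + y^\perp\, \tilde r_\eps(t)$ with $(\tilde l_\eps,\tilde r_\eps) := \rho_\eps \ast_t (\tilde l, \tilde r) \in C^\infty$. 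Hence $\tilde v_\eps \in \tilde{\mathcal{W}}_{\tau}$ and $\tilde v_\eps \to \tilde v$ in $\tilde{\mathcal{E}}_{\tau}$ as $\eps \to 0$ by classical mollification arguments.

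\emph{Step 3 (push forward).} Define $v_\eps$ as the inverse Piola transform of $\tilde v_\eps$. Since $\Psi$ is $C^\infty$ in $x$ and $W^{1,\infty}$ in $t$, and $\tilde v_\eps$ is $C^\infty$ in $t$ with values in $H^1_\sigma$, the chain rule shows $v_\eps \in W^{1,\infty}(0,\tau; H^1_\sigma(\Omega))$ on the fluid side, while on $\mathcal{S}(t)$ it is a $W^{1,\infty}$-in-time rigid field; thus $v_\eps \in \mathcal{W}_{\tau}$. The bicontinuity of Step 1 then yields $v_\eps \to v$ in $\mathcal{E}_{\tau}$, proving the density. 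The main (indeed only non-routine) obstacle is carrying out Step 1 cleanly: one must check that the co-rotated Piola change of variables provides equivalent norms between $\mathcal{E}_{\tau}$ and $\tilde{\mathcal{E}}_{\tau}$ (and between $\mathcal{W}_{\tau}$ and $\tilde{\mathcal{W}}_{\tau}$), preserves the divergence-free condition and the normal trace on $\partial\Omega$, and turns moving rigid fields into fixed-frame rigid fields. Once this dictionary is in place, Steps 2 and 3 are entirely standard.
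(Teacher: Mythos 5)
Your plan coincides with the paper's proof: the author uses exactly the Inoue--Wakimoto diffeomorphism of Claim \ref{cla:cov} together with the transformations $\tilde v_F = \nabla Y(t,X(t,y))\, f_F(t,X(t,y))$ (the volume-preserving Piola pullback, since $\det\nabla X=1$) and $\tilde v_S = Q^T(t) f_S(t,h(t)+Q(t)y)$, then extends in time with a smooth cutoff, mollifies with $\eta_\eps$, and pushes back. Your three steps, including the point that the co-rotation is what turns moving rigid fields into fixed-frame ones, are precisely the argument in the text.
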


\begin{proof}
To prove this lemma we construct an approximation sequence of element in $ \mathcal{W}_{\tau} $ that converge in $ \mathcal{E}_{\tau}$. We  present all the details of this construction because we  use the special property of this construction to prove Theorem \ref{THE:ene}.  
Let $ f $ an element of $ \mathcal{E}_{\tau}$, this element is not in the space $ \mathcal{W}_{\tau} $ because is not enough regular in time. 
To regularize  $ f $ in time  and preserve the rigidity of the motion  inside $ \mathcal{S}(t)$  we use a geometric change of variables that fix the position of the solid,  make a convolution in time in these variables and finally go back to the original variables. 
We start by defining the change of variables.
\begin{figure}
\centering
\includegraphics[scale = 0.75]{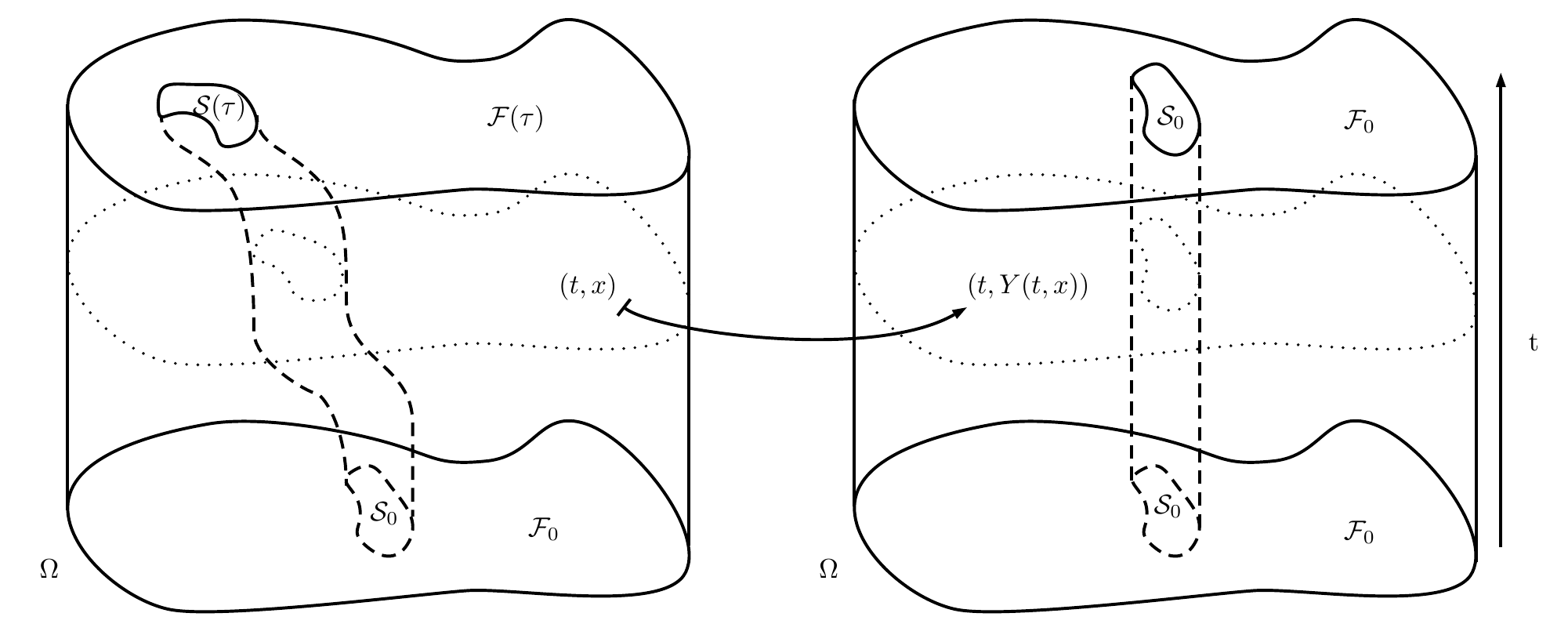}
\caption{ Change of variables $ Y_{\tau}$, which is the inverse of $ X_{\tau}$.}
\label{cha:vari}
\end{figure}
We recall from \cite[Proposition 2.1]{IW},
\cite[Lemma 2.1 and Lemma 2.2]{cha:tak}   (see also \cite[ Lemma 6.1 and Lemma 6.2]{str:dem}) the following change of variables that fixes the solid and is the identity on a neighbourhood of $ \partial \Omega $, see also Figure \ref{cha:vari}. Let $ h \in W^{1,\infty}([0,\tau], \mathbb{R}^2)$ and $ r \in L^{\infty}([0,\tau], \mathbb{R})$ associate with $ u_S $ via $ u_S(t,x) = h'(t) + (x-h(t))^{\perp}r(t)$, $ h(0) = 0 $. Let $ \alpha $ such that
%
$\min_{t \in [0,\tau] } \left(\dist(\mathcal{S}(t),\partial \Omega) \right) \geq \alpha$,
%
the existence of such an $ \alpha $ comes from the fact that by definition of weak solution the motion of $ \mathcal{S} $ is continuous in time, which implies that for any $ \tau < T $ we have
%
$\inf_{t\in[0,\tau]} \left( \dist{\mathcal{S}(t),\partial \Omega} \right) = \min_{t\in[0,\tau]} \left( \dist{\mathcal{S}(t),\partial \Omega} \right) > 0$.
%
Finally let $ \psi \in C^{\infty}_c(\Omega; \mathbb{R}) $ be a cut-off, such that $ \psi \equiv 1 $ for any $ x $ such that $\dist( x , \partial \Omega ) \geq \alpha/2 $ and $ \psi \equiv 0 $ for any $ x $ such that $\dist( x , \partial \Omega ) \leq \alpha/4 $. We define $ w $ in $ [0,\tau] \times \Omega $ by
\begin{equation*}
w(t,x) = ( x -h(t))^{\perp}\cdot h'(t)+ \frac{|x-h(t)|^2}{2}r(t),
\end{equation*}
and we define $ \Lambda : [0,\tau] \times \Omega \to \mathbb{R}^2 $ by
\begin{equation}
\label{def:Lam}
\Lambda(t,x) = \begin{pmatrix}
\displaystyle{-\frac{\partial \psi}{\partial x_2} w + \psi u_{S,1}} \\
\displaystyle{\frac{\partial \psi }{\partial x_1} w +\psi u_{S,2}}
\end{pmatrix},
\end{equation} 
where $ u_{S,i} $ is the $ i $-th component of $ u_S $.
Then $ \Lambda \in L^{\infty}(0,T; C^{k}(\Omega))$ for any $ k \in \mathbb{N} $, 
 $ \Lambda(t,x) = 0 $ for all $t$ in $ (0,\tau) $ when $ \dist(x, \partial \Omega ) < \alpha/4 $,
 $ \div \Lambda(t,x) = 0 $ for any $ (t,x) $ and 
 $ \Lambda(t,x) = h'(t) + r(t)(x-h(t))^{\perp} $ for any   $t$ in $ (0,\tau) $ and for any  $x$ in $ \mathcal{S}(t)$.

\begin{Claim}
\label{cla:cov}
Let $ \Lambda $ defined in (\ref{def:Lam}). Then there exists a unique solution $ X:[0,\tau]\times \Omega \to \Omega $ with $ X \in W^{1,\infty}(0;T; C^{k}(\Omega)) $ for any $ k \in \mathbb{N} $ of the equation 
\begin{equation*}
\begin{cases}
\partial_t X(t,y) = \Lambda(t,X(t,y)) \quad & \text{ in } (0,T)\times\Omega, \\
X(0,y) = y \quad & \text{ in } \Omega. 
\end{cases}
\end{equation*}  
Moreover it holds
\begin{itemize}
\item $ X(t,.) $ is a $ C^{\infty}$-diffeomorphism for any $ t \in [0,\tau]$,
\item $ \det \nabla X(t,.) = 1 $ for any $ t \in [0,\tau]$,
\item $ Y(t,.) = [X(t,.)]^{-1} $ is the inverse of $ X(t,.) $ for any $ t \in [0,\tau]$.
\end{itemize}
\end{Claim}

We are now able to smoothen the solution in time in the following way. Let $ \eta \in C^{\infty}_c(-1,1) $ be an even function such that $ \eta = 1 $ in a open neighbourhood of $ 0 $, $ 0 \leq \eta \leq 1 $ and $ \int \eta = 1 $ and let $ \eta_{\varepsilon} = \eta (./\varepsilon)/\varepsilon $.
Let $ \psi \in C^{\infty}_c(\mathbb{R}) $ such that $ 0 \leq \psi \leq 1 $ and such that $ \psi \equiv 1 $ in an open neighbourhood of $ [0,\tau] $. Finally let $ X_{\tau} $ be the extension in $(-\infty, +\infty ) $ of $ X $ defined in Claim \ref{cla:cov}, i.e.
\begin{equation}
\label{X:def}
X_{\tau}(t,y) = \begin{cases}  X(0,y) \quad & \text{ for } t \leq 0 \text{ and any } y \in \Omega, \\ X(t,y) \quad & \text{ for } t \in (0,\tau) \text{ and any } y \in \Omega, \\ X(\tau,y) \quad & \text{ for } t \geq \tau \text{ and any } y \in \Omega. \end{cases}
\end{equation} 
And in analogous way we extend the inverse $ Y_{\tau}$, $ h_{\tau}$ and $ Q_{\tau}$. In what follows we do not write the index $ \tau $ for simplicity.

We introduce the functions
\begin{align}
\tilde{v}(t,y) = & \nabla Y(t,X(t,y)) f(t,X(t,y)),\nonumber \\
\tilde{v}_{S}(t,y) = &  Q^T(t) f_S(t,h(t)+Q(t)y), \label{ext:v} \\ 
\tilde{v}_{F}(t,y) = & \nabla Y(t,X(t,y)) f_F(t,X(t,y)). \nonumber
\end{align}
It is clear that $ \tilde{v} \in L^{\infty}(0,\tau ;L^2_{\sigma}(\Omega)) $,  $ \tilde{v}_S \in L^{2}(0,\tau;\mathcal{R}) $ and $ \tilde{v}_F \in L^{2}(0,\tau ; H^1_{\sigma}(\Omega))$.  

Let $ v $, $ v_S $ and $ v_F $ the following extension of $ \tilde{v} $, $ \tilde{v}_S $ and $ \tilde{v}_F $ in $ (-\infty, +\infty) $, i.e.
\begin{equation*}
v(t,.) = \begin{cases} \psi(t) \tilde{v}(0,.) \quad & \text{ for } t \leq 0, \\ \psi(t) \tilde{v}(t,.) \quad & \text{ for } t \in (0,\tau) \\ \psi(t)\tilde{v}(\tau,.) \quad & \text{ for } t \geq \tau, \end{cases},
\end{equation*}  
then we define $ v_{\varepsilon} = \eta_{\varepsilon} * v $ and in an analogous way $ v_{S,\varepsilon}  = \eta_{\varepsilon}*v_{S}$ and $ v_{F,\varepsilon} = \eta_{\varepsilon}*v_{F} $. It is clear from Figure \ref{cha:vari} that when we convolute in time we average velocity associated or only with the fluid, in the case $ y \in \mathcal{F}_0 $ or only with the body, in the case $ y \in \mathcal{S}_0 $. We are now able to define
\begin{align}
f_{\varepsilon}(t,x) = \nabla X(t,Y(t,x)) v_{\varepsilon}(t,Y(t,x)), \nonumber \\
f_{S,\varepsilon}(t,x) =Q(t) v_{S,\varepsilon}(t,Q^{T}(x-h(t))), \label{u:appr} \\
f_{F,\varepsilon}(t,x) = \nabla X(t,Y(t,x)) v_{F,\varepsilon}(t,Y(t,x)). \nonumber 
\end{align}
Note that 
\begin{equation*}
f_{S,\varepsilon}(t,x) = Q(t)\left(\eta_{\varepsilon}*(Q^T l_f)\right)(t)+(x-h(t))^{\perp}\eta_{\varepsilon}*r_f(t). 
\end{equation*}
Then it is straightforward that $ f_{\varepsilon} \in \mathcal{W}_{\tau} $ (observe that $ X(t,y) = h(t)+Q(t)y $ in a neighbourhood of $ \mathcal{S}(t) $) and that
$f_{\varepsilon} \to f$ in $ L^{2}(0,\tau; L^2_{\sigma}(\Omega))$,
$f_{F,\varepsilon} \to f_F$ in $ L^2(0,\tau; H^1_{\sigma}(\Omega))$ and 
$f_{S,\varepsilon} \to f_S $  in $ L^2(0,\tau; \mathcal{R}).$
\end{proof}

\section{Proof of Theorem \ref{THE:ene}}
We start with the proof of the energy equality. Let $ ( \mathcal{S} , u ) $ a weak solution with initial data $ (\mathcal{S}_0, u_0) $ for some $ T > 0 $. Fix a representative of $u$. 
 For almost every $ \tau \in [0,T) $ it holds:
\begin{gather}
-\int_0^{\tau} \int_{\mathcal{F}(t)}u_{F}\cdot \partial_t \varphi_F dx dt -\int_0^{\tau}\int_{\mathcal{S}(t)}\rho_S u_S\cdot \partial_t \varphi_S dx dt -\int_0^{\tau}\int_{\mathcal{F}(t)}  u_F \otimes u_F : \nabla \varphi_F dx dt \nonumber \\
+ 2 \int_0^{\tau} \int_{\mathcal{F}(t)} Du_F :D\varphi_F dx dt + 2 \alpha \int_0^{\tau} \int_{\partial \Omega} u_F \cdot \varphi_F ds dt + 2  \alpha \int_0^{\tau} \int_{\partial \mathcal{S}(t)} (u_F-u_S)\cdot (\varphi_F -\varphi_S) ds dt  \label{wek:sho} \\
= \int_{\mathcal{F}(0)} u_{F,0}\cdot \varphi_F|_{t = 0} dx +\int_{\mathcal{S}(0)} \rho_S u_{S,0}\cdot\varphi_S|_{t=0} dx - \int_{\mathcal{F}(\tau)} (u_F\cdot \varphi_F)|_{t = \tau} dx - \int_{\mathcal{S}(\tau)} (\rho_S u_{S,0}\cdot\varphi_S)|_{t=\tau} dx. \nonumber 
\end{gather}
for all test functions $ \varphi \in \mathcal{W}_{\tau} $. We can obtain the energy inequality by testing the equation (\ref{wek:for}) by the solution $ u $ itself at a formal level. To do this in a rigorous way we reformulate (\ref{wek:for}) in such a way that we can test with less regular in time functions. We notice that
\begin{equation}
\label{reg:der}
\frac{D}{dt} u \in \mathcal{E}_{\tau}^{-1}.  
\end{equation}
Indeed  (\ref{wek:sho}) tells us that for almost every $ t_1 < t_2 \in [0,\tau ]$ it holds
\begin{gather*}
-\int_{t_1}^{t_2} \int_{\mathcal{F}(t)}u_{F}\cdot \partial_t \varphi_F-\int_{t_1}^{t_2}\int_{\mathcal{S}(t)}\rho_S u_S\cdot \partial_t \varphi_S 
 + \int_{\mathcal{F}({t_2})} (u_F\cdot \varphi_F)|_{t = {t_2}}  + \int_{\mathcal{S}(t_2)} (\rho_S u_{S}\cdot \varphi_S)|_{t={t_2}} \\ -
 \int_{\mathcal{F}(t_1)} u_{F}\cdot \varphi_{F}|_{t = t_1}-\int_{\mathcal{S}(t_1)} \rho_S u_{S}\cdot \varphi_{S}|_{t=t_1} - \int_{t_1}^{t_2}\int_{\mathcal{F}(t)}  u_F \otimes u_F : \nabla \varphi_{F} \\
= - 2 \int_{t_1}^{t_2} \int_{\mathcal{F}(t)} Du_F :D\varphi_{F} - 2 \alpha \int_{t_1}^{t_2} \int_{\partial \Omega} u_F \cdot \varphi_{F} - 2  \alpha \int_{t_1}^{t_2} \int_{\partial \mathcal{S}(t)} (u_F-u_S)\cdot (\varphi_{F} -\varphi_{S}) 
\end{gather*}
and the following estimate holds
\begin{align*}
\bigg| 2 \int_0^{\tau} \int_{\mathcal{F}(t)} Du_F :D\varphi_{F} +  2 \alpha \int_0^{\tau} \int_{\partial \Omega} u_F \cdot \varphi_{F} +  2  \alpha \int_0^{\tau} \int_{\partial \mathcal{S}(t)} (u_F-u_S)\cdot (\varphi_{F} -\varphi_{S}) \bigg| \leq C\|\varphi\|_{\mathcal{E}_{\tau}}.
\end{align*}
This implies that we can write the weak formulation (\ref{wek:sho}) in the following way
\begin{equation} \label{15}
\left\langle \frac{D}{dt} u, \varphi \right\rangle = - 2 \int_{0}^{\tau} \int_{\mathcal{F}(t)} Du_F :D\varphi_{F} - 2 \alpha \int_{0}^{\tau} \int_{\partial \Omega} u_F \cdot \varphi_{F} - 2  \alpha \int_{0}^{\tau} \int_{\partial \mathcal{S}(t)} (u_F-u_S)\cdot (\varphi_{F} -\varphi_{S}). 
\end{equation} 
The advantage of this formulation is that we can test it with any function in $ \mathcal{E}_{\tau}$. In fact $ \mathcal{W}_{\tau} $ is dense in $ \mathcal{E}_{\tau} $ and we can pass to the limit in norm of $ \mathcal{E}_{\tau}$. 

If we test the equation with $ u $, we obtain
\begin{equation} \label{16}
  \left\langle \frac{D}{dt} u, u \right\rangle 
= - 2 \int_0^{\tau} \int_{\mathcal{F}(t)} Du_F :Du_F - 2 \alpha \int_0^t \int_{\partial \Omega} u_F \cdot u_F - 2  \alpha \int_0^{\tau} \int_{\partial \mathcal{S}(t)} (u_F-u_S)\cdot (u_F -u_S).   
\end{equation}

For almost every $ \tau \in (0,T) $ the proof of the energy equality (\ref{energy}) therefore follow from the following claim. Finally to prove the energy equality everywhere we use the fact that that exists a continuous representative, which implies that (\ref{wek:sho}) holds for every $ \tau\in [0,T)$ so we can conclude the proof of the energy inequality.

\begin{Claim} \label{regu}
It holds
\begin{equation*}
\left\langle \frac{D}{dt} u, u \right\rangle_{\mathcal{E}_{\tau}^{-1},\mathcal{E}_{\tau}} =\frac{1}{2} \|u\|_{\mathcal{H}_{S(\tau)}}^2(\tau)- \frac{1}{2}\|u\|_{\mathcal{H}_{S(0)}}^2(0).  
\end{equation*}
\end{Claim}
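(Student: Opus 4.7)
The plan is to test the definition of $\frac{D}{dt}u$ against the approximations $u_\varepsilon\in\mathcal{W}_\tau$ built in the proof of Lemma~\ref{lem:app}, whose key feature is the convolution structure $v_\varepsilon = \eta_\varepsilon\ast v$ in the pulled-back reference configuration with $\eta$ even. Choosing $\tau$ to be a Lebesgue point of $t\mapsto\|u(t)\|^2_{\mathcal{H}_{S(t)}}$ and applying the definition with $t_1=0$, $t_2=\tau$ yields
\begin{equation*}
\left\langle \frac{D}{dt} u, u_\varepsilon \right\rangle = (u(\tau), u_\varepsilon(\tau))_{\mathcal{H}_{S(\tau)}} - (u(0), u_\varepsilon(0))_{\mathcal{H}_{S(0)}} - I_\varepsilon,
\end{equation*}
with $I_\varepsilon := \int_0^\tau\!\int_{\mathcal{F}(t)} u_F\cdot\tfrac{D}{dt}u_{F,\varepsilon} + \int_0^\tau\!\int_{\mathcal{S}(t)}\rho_S u_S\cdot\tfrac{D}{dt}u_{S,\varepsilon}$. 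The strategy is to let $\varepsilon\to 0$ and show that $I_\varepsilon\to \langle \tfrac{D}{dt}u, u\rangle$ as well, producing $2\langle \tfrac{D}{dt}u, u\rangle = \|u(\tau)\|^2_{\mathcal{H}_{S(\tau)}} - \|u(0)\|^2_{\mathcal{H}_{S(0)}}$, which is the claim.

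The left-hand side converges to $\langle \tfrac{D}{dt}u, u\rangle$ by density of $\mathcal{W}_\tau$ in $\mathcal{E}_\tau$ (Lemma~\ref{lem:app}). The two boundary terms are handled at Lebesgue points of $t\mapsto u(t)$ in $L^2$: the constant-in-time extension of $v$ past $\tau$ together with the evenness of $\eta$ gives $v_\varepsilon(\tau)\to v(\tau)$ strongly in $L^2(\Omega)$, which after the change of variables from Claim~\ref{cla:cov} produces $u_\varepsilon(\tau)\to u(\tau)$ in $L^2(\Omega)$; the argument at $t=0$ is analogous, using that the weak formulation (\ref{wek:for}) enforces the initial datum in the strong $L^2$ sense.

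The principal obstacle is the convergence $I_\varepsilon\to\langle\tfrac{D}{dt}u, u\rangle$. Pull back via the measure-preserving diffeomorphism $X$ of Claim~\ref{cla:cov}. On the solid, $X(t,\cdot)$ is the isometry $y\mapsto h(t)+Q(t)y$ and $\Lambda=u_S$; the explicit formula for $\tfrac{D}{dt}f_S$ recalled before Definition~\ref{def:wes} together with $\int_{\mathcal{S}(t)}\rho_S(x-h)\,dx=0$ and $\int_{\mathcal{S}(t)}\rho_S|x-h|^2\,dx=\mathcal{J}$ reduces the solid part of $I_\varepsilon$ to $\int_0^\tau(m\,l_u\cdot l'_{u,\varepsilon}+\mathcal{J}\,r_u\,r'_{u,\varepsilon})\,dt$, and the evenness of $\eta_\varepsilon$ applied to $r_{u,\varepsilon}=\eta_\varepsilon\ast r_u$ and $l_{u,\varepsilon}=Q(\eta_\varepsilon\ast Q^T l_u)$ lets one transfer the convolution onto the $u$ factor, modulo $O(\varepsilon)$ commutators with $Q$, yielding exactly the solid part of $\langle \tfrac{D}{dt}u, u\rangle$. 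On the fluid, $X$ is only a diffeomorphism and a direct expansion gives
\begin{equation*}
\tfrac{D}{dt} u_{F,\varepsilon}(t,X(t,y)) = \partial_t\nabla X(t,y)\,v_{F,\varepsilon}(t,y) + \nabla X(t,y)\,\partial_t v_{F,\varepsilon}(t,y) + \bigl((u_F-\Lambda)\cdot\nabla\bigr)u_{F,\varepsilon}(t,X(t,y));
\end{equation*}
contracting with $u_F(t,X)=\nabla X\,v_F$ produces a principal quadratic term $\int_0^\tau\int_{\mathcal{F}_0} v_F^T M\,\partial_t v_{F,\varepsilon}\,dy\,dt$ with metric $M=(\nabla X)^T\nabla X$, together with two remainders involving the $L^\infty$-in-$t$ factor $\partial_t\nabla X$ and the first-order transport remainder supported where $\Lambda\neq u_F$. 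A Friedrichs-type commutator estimate — valid because $M$, $\partial_t\nabla X$ and $\Lambda$ are Lipschitz in $t$ with smooth spatial dependence — lets $\eta_\varepsilon\ast$ commute with these multiplications with errors vanishing in the limit, so that by convolution symmetry the principal term converges to $\langle \partial_t v_F, M v_F\rangle$, which after inverting the change of variables is the fluid part of $\langle \tfrac{D}{dt}u, u\rangle$.

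Combining the three limits in the opening identity gives $\langle \tfrac{D}{dt}u, u\rangle = \|u(\tau)\|^2_{\mathcal{H}_{S(\tau)}} - \|u(0)\|^2_{\mathcal{H}_{S(0)}} - \langle \tfrac{D}{dt}u, u\rangle$, which solves to the claim. The delicate part is the fluid commutator analysis: the non-isometry of $X(t,\cdot)$ on $\mathcal{F}_0$ precludes a direct application of the symmetric convolution identity, and one must use the dual-space regularity $\tfrac{D}{dt}u\in\mathcal{E}_\tau^{-1}$ from (\ref{reg:der}) together with the $W^{1,\infty}(0,\tau;C^k)$ regularity of $X$ from Claim~\ref{cla:cov} to bound all commutators uniformly and pass to the limit.
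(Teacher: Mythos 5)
Your proposal is correct and follows essentially the same route as the paper: regularize $u$ through the change of variables of Claim~\ref{cla:cov} combined with time-convolution against an even kernel, test the duality definition of $\frac{D}{dt}u$ with the resulting $u_\varepsilon\in\mathcal{W}_\tau$, and use the oddness of $\eta_\varepsilon'$ plus commutator estimates (exploiting the $W^{1,\infty}(0,\tau;C^k)$ regularity of $X$ and the rigidity of $X$ near $\mathcal{S}(t)$) to transfer the time derivative back onto a second regularization of $u$ converging in $\mathcal{E}_\tau$. The only difference is bookkeeping — you evaluate $\langle\frac{D}{dt}u,u_\varepsilon\rangle$ and symmetrize inside the remaining integral $I_\varepsilon$, whereas the paper computes $\langle\frac{D}{dt}u_\varepsilon,u_\varepsilon\rangle$ once exactly by Reynolds transport and once as $\langle\frac{D}{dt}u,U_\varepsilon\rangle+o(\varepsilon)$ — but the key identities and estimates are the same.
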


\begin{proof}[Proof of the claim.]

Let $ u_{\varepsilon } $ be the approximation of $ u $ as in Lemma \ref{lem:app}, in other words let $ u_{\varepsilon} $ defined as in (\ref{u:appr}) where we replace $ f $ by $ u $. We are going to prove
\begin{equation}
\label{step:a}
\left\langle \frac{D}{dt} u_{\varepsilon}, u_{\varepsilon}\right\rangle = \frac{1}{2}\|u_{\varepsilon}\|_{\mathcal{H}_{S(\tau)}}^2(\tau)-\frac{1}{2}\|u_{\varepsilon}\|_{\mathcal{H}_{S(0)}}^2(0)
\end{equation}
and
\begin{equation}
\label{step:b}
\left\langle \frac{D}{dt} u_{\varepsilon}, u_{\varepsilon}\right\rangle = \left\langle \frac{D}{dt} u, U_{\varepsilon}\right\rangle + o(\varepsilon),
\end{equation}
where $ U_{\varepsilon} \in \mathcal{W}_{\tau} $ converges to $ u $ in $ \mathcal{E}_{\tau} $. The proof of the claim follows from (\ref{reg:der}), (\ref{step:a}) and (\ref{step:b}), in fact
\begin{equation*}
  \frac{1}{2}\|u\|_{\mathcal{H}_{S(\tau)}}^2(\tau)-\frac{1}{2}\|u\|_{\mathcal{H}_{S(0)}}^2(0)  \leftarrow  \frac{1}{2}\|u_{\varepsilon}\|_{\mathcal{H}_{S(\tau)}}^2(\tau)-\frac{1}{2}\|u_{\varepsilon}\|_{\mathcal{H}_{S(0)}}^2(0) = \left\langle \frac{D}{dt} u, U_{\varepsilon}\right\rangle + o(\varepsilon) \rightarrow \left\langle \frac{D}{dt} u, u \right\rangle,
\end{equation*}
as $\varepsilon $ goes to $ 0 $. 

To prove (\ref{step:a}), we use the fact that $ \frac{D}{dt}u_{\varepsilon} \in \mathcal{E}_{\tau}$, the identification of $ \mathcal{E}_{\tau} $ in $ \mathcal{E}_{\tau}^{-1} $ through the scalar product in $\mathcal{H}_S $ and Reynold's transport Theorem, see for instance \cite[Lemma 2.1]{wea:str}. 

Let now tackle (\ref{step:b}). We define $ U_{\varepsilon} $ as follows:
\begin{equation}
\label{def:LRUE}
l_{U_{\varepsilon}}(s) = \int_{-\infty}^{+\infty} \eta_{\varepsilon}(s-t) Q(s)Q^T(t)l_{\varepsilon}(t) dt,  \quad r_{U_{\varepsilon}}(s) = \int_{-\infty}^{+\infty}\eta(s-t)r_{\varepsilon}(t) dt, \quad U_{S,\varepsilon} = l_{U_{\varepsilon}}+(x-h(t))^{\perp}r_{U_{\varepsilon}},
\end{equation} 
where for simplicity we wrote $l_{\varepsilon}$ instead of $l_{u_\varepsilon} $ and $r_{\varepsilon}$ instead of $r_{u_\varepsilon} $ and $ u_{S, \varepsilon}(t,x) = l_{\varepsilon}(t) + (x-h(t))^{\perp}r_{\varepsilon}(t) $, 
\begin{equation*}
U_{F,\varepsilon}(t,x) = \sum_{m, l , f} \int_{-\infty}^{\infty}\eta_{\varepsilon}(s-t) \nabla Y_m(t,x) \partial_m X_l(s,Y(t,x))\partial_f X_l(s,Y(t,x)) v_{F,\varepsilon, f}(s,Y(t,x)) ds,
\end{equation*}
\begin{equation}
\label{def:UE}
U_{\varepsilon}(t,x) = \sum_{m, l , f} \int_{-\infty}^{\infty}\eta_{\varepsilon}(s-t) \nabla Y_m(t,x) \partial_m X_l(s,Y(t,x))\partial_f X_l(s,Y(t,x)) v_{\varepsilon, f}(s,Y(t,x)) ds,
\end{equation}
where $ X $ and $ Y $ are defined in (\ref{X:def}) and $ v_{\varepsilon} $ and $ v_{F,\varepsilon} $ are defined in (\ref{u:appr}), if we replace $ f_{\varepsilon} $ and $ f_{F,\varepsilon} $ by $ u_{\varepsilon} $ and $ u_{F,\varepsilon} $.

Observe that $ U_{\varepsilon} \in \mathcal{W}_{\tau} $ and $ U_{\varepsilon} $ converges to $ u $ in $\mathcal{E}_{\tau}$. To prove (\ref{step:b}), it is sufficient to prove
\begin{equation}
\label{BIGE0}
\int_0^{\tau} r'_{\varepsilon}r_{\varepsilon}  =  - \int_{0}^{\tau} r_u r'_{U_{\varepsilon}} + r_u(\tau)r_{U_{\varepsilon}}(\tau) -r_u(0)r_{U_{\varepsilon}} + o(\varepsilon),
\end{equation}
\begin{equation}
\label{BIGE1}
\int_0^{\tau}  l'_{\varepsilon}\cdot l_{\varepsilon}  = - \int_{0}^{\tau}  l_u \cdot  l'_{U_{\varepsilon}} + l_u(\tau)\cdot l_{U_{\varepsilon}}(\tau) -l_u(0)\cdot l_{U_{\varepsilon}}(0) +o(\varepsilon),
\end{equation}
and
\begin{align}
\int_{0}^{\tau} \int_{\mathcal{F}(t)} \left(\partial_t u_{\varepsilon} \cdot  u_{\varepsilon} + (u\cdot \nabla) u_{\varepsilon}\cdot u_{\varepsilon} \right)dx dt \nonumber
=  & - \int_{0}^{\tau} \int_{\mathcal{F}(t)} \left(u \cdot \partial_t U_{\varepsilon} +(u\cdot \nabla) U_{\varepsilon}\cdot u  \right)dx dt \\  & +\frac{1}{2}\int_{\mathcal{F}(\tau)} u(\tau,.) \cdot U_{\varepsilon}(\tau,.) dx - \frac{1}{2}\int_{\mathcal{F}_0} u(0,.)\cdot U_{\varepsilon}(0,.)dx +o(\varepsilon). \label{BIGE}
\end{align}

We start with the proof of (\ref{BIGE0}). From (\ref{u:appr}), we have that $ r_{\varepsilon} = \eta_{\varepsilon}*r_u $. The following computation holds
\begin{align*}
\int_0^{\tau} r'_{\varepsilon}r_{\varepsilon} = & \int_ {-\infty}^{+\infty} r'_{\varepsilon}(t)r_{\varepsilon}(t) dt -\int_{-\infty}^{0} r'_{\varepsilon}r_{\varepsilon}  -\int_{\tau}^{+\infty} r'_{\varepsilon}r_{\varepsilon}  \\
= & \int_{-\infty}^{+\infty} \left(\int_{-\infty}^{+\infty}\eta'(t-s)r_u(s)ds \right)r_{\varepsilon}(t) dt -\int_{-\infty}^{0} r'_{\varepsilon}r_{\varepsilon}  -\int_{\tau}^{+\infty} r'_{\varepsilon}r_{\varepsilon}  \\
= &- \int_{-\infty}^{+\infty} r_u(s) \left(\int_{-\infty}^{+\infty}\eta'(s-t)r_{\varepsilon}(t) dt\right) ds  -\int_{-\infty}^{0} r'_{\varepsilon}r_{\varepsilon}  -\int_{\tau}^{+\infty} r'_{\varepsilon}r_{\varepsilon}  \\
= & - \int_{0}^{\tau} r_u(s) \left(\int_{-\infty}^{+\infty}\eta'(s-t)r_{\varepsilon}(t) dt\right) ds + r_u(\tau)r_{U_{\varepsilon}}(\tau) - r_u(0)r_{U_{\varepsilon}}(0) + o(\varepsilon)\\
= & - \int_{0}^{\tau} r_u r'_{U_{\varepsilon}} + r_u(\tau)r_{U_{\varepsilon}}(\tau) -r_u(0)r_{U_{\varepsilon}} + o(\varepsilon). 
\end{align*} 
where to go from line 2 to line 3 we use the fact that $ \eta' $ is odd and in the last line we use (\ref{def:LRUE}). 

We perform similar computation to prove (\ref{BIGE1}). Clearly we have that 
\begin{equation} \label{merc1}
\int_0^{\tau}  l'_{\varepsilon}\cdot l_{\varepsilon}  = \int_{-\infty}^{+\infty}  l'_{\varepsilon}\cdot l_{\varepsilon} -\int_{-\infty}^{0}  l'_{\varepsilon}\cdot l_{\varepsilon} -\int_{\tau}^{+\infty}  l'_{\varepsilon}\cdot l_{\varepsilon}  
\end{equation}
and that 
\begin{equation}  \label{merc2}
\int_{-\infty}^{0}  l'_{\varepsilon}\cdot l_{\varepsilon} dt \to \frac{1}{2} |l_u|^2(0) \quad \text{ and } \quad \int_{\tau}^{+\infty}  l'_{\varepsilon}\cdot l_{\varepsilon} dt \to - \frac{1}{2} |l_u|^2(\tau) \quad \text{ as } \varepsilon \to 0. 
\end{equation}
We recall that by definition (\ref{u:appr}) of $ u_{S,\varepsilon}$ we have
\begin{equation*}
l_{\varepsilon}(t) = Q(t) \int_{-\infty}^{+\infty} \eta_{\varepsilon}(t-s)Q^T(s)l_u(s) ds.
\end{equation*}
Using this definition we have 
\begin{align}
\int_{-\infty}^{+\infty}  l'_{\varepsilon}(t) \cdot l_{\varepsilon}(t) dt = & \int_{-\infty}^{+\infty} Q'(t) \int_{-\infty}^{+\infty} \eta_{\varepsilon}(t-s)Q^T(s)l_u(s) ds  \cdot l_{\varepsilon}(t) dt \label{l:1} \\
& + \int_{-\infty}^{+\infty} Q(t) \int_{-\infty}^{+\infty} \eta_{\varepsilon}'(t-s)Q^T(s)l_u(s) ds \cdot l_{\varepsilon}(t) dt.  \label{l:2}
\end{align}
We use he fact that $ \eta' $ is odd and we invert the integration in $ s $ and $ t $ to arrive  at 
\begin{align}
(\ref{l:2}) =&  - \int_{-\infty}^{+\infty}  Q^T(s)l_u(s) \cdot \int_{-\infty}^{+\infty} \eta_{\varepsilon}'(s-t) Q^T(t)l_{\varepsilon}(t) dt ds \nonumber \\
= & - \int_{-\infty}^{+\infty}  Q^T(s)l_u(s) \cdot \partial_s \int_{-\infty}^{+\infty} \eta_{\varepsilon}(s-t) Q^T(t)l_{\varepsilon}(t) dt ds \nonumber
\\   = &  \int_{-\infty}^{+\infty}  l_u(s) \cdot \int_{-\infty}^{+\infty} \eta_{\varepsilon}(s-t) Q'(s) Q^T(t)l_{\varepsilon}(t) dt ds \label{l:4} \\
& - \int_{-\infty}^{+\infty}  l_u(s) \cdot \partial_s \int_{-\infty}^{+\infty} \eta_{\varepsilon}(s-t) Q(s)Q^T(t)l_{\varepsilon}(t) dt ds \label{l:5}.
\end{align}
We summarize the last computations to arrive at 
\begin{align}
 \label{merc3}
\int_{-\infty}^{+\infty}  l'_{\varepsilon}(t) \cdot l_{\varepsilon}(t) dt = (\ref{l:1}) + (\ref{l:4})+ (\ref{l:5}).
\end{align}
Moreover by the fact that $ \partial_t(Q(t)Q^T(t) ) = Q'(t)Q^T(t) +Q(t)(Q^T)'(t) = 0 $ we have
\begin{equation}  \label{merc4}
(\ref{l:1}) + (\ref{l:4}) \to 0 \quad \text{ as } \varepsilon \to 0.
\end{equation}
Gathering 
 \eqref{merc1},  \eqref{merc2}, \eqref{merc3},   \eqref{merc4} and  using that 

\begin{equation*}
-  \int_{-\infty}^{+\infty}   l_{u}\cdot l'_{U_\varepsilon}  = - \int_0^{\tau}    l_{u}\cdot l'_{U_\varepsilon}
- \frac12  l_{u} (0) \cdot l'_{U_\varepsilon}  (0) +  \frac12  l_{u}  (\tau) \cdot l'_{U_\varepsilon}  (\tau) + o(\varepsilon),
\end{equation*}
we obtain
\begin{align*}
\int_0^{\tau}  l'_{\varepsilon}\cdot l_{\varepsilon} dt = & - \int_{0}^{\tau}  l_u(s) \cdot \partial_s \int_{-\infty}^{+\infty} \eta_{\varepsilon}(s-t) Q(s)Q^T(t)l_{\varepsilon}(t) dt ds +l_u(\tau)\cdot l_{U_{\varepsilon}}(\tau) -l_u(0)\cdot l_{U_{\varepsilon}}(0) + o(\varepsilon) \nonumber \\
= & - \int_{0}^{\tau}  l_u(s) \cdot  l'_{U_{\varepsilon}} ds+l_u(\tau)\cdot l_{U_{\varepsilon}}(\tau) -l_u(0)\cdot l_{U_{\varepsilon}}(0) + o(\varepsilon).
\end{align*}
where we use (\ref{def:LRUE}).

We are left with the proof of (\ref{BIGE}). We start with the term
\begin{equation*}
\int_{0}^{\tau} \int_{\mathcal{F}(t)}\partial_t u_{\varepsilon}\cdot u_{\varepsilon} dx dt = \int_{-\infty}^{+\infty}  \int_{\mathcal{F}(t)} \partial_t u_{\varepsilon}\cdot u_{\varepsilon} dx dt -\frac{1}{2}\int_{\mathcal{F}_0}u^2(0,.) dx +\frac{1}{2}\int_{\mathcal{F}(\tau)}u^2(\tau,.) dx + o(\varepsilon).
\end{equation*}

As before we start the computation by the definition (\ref{u:appr}) of the approximate sequence $ u_{\varepsilon}$ (we exchange $ f $ with $ u $) and we compute the derivative in time. We recall the definition (\ref{u:appr}):
\begin{equation}
u_{\varepsilon, l}(t,x) = \sum_{m} \partial_m X_l(t,Y(t,x))v_{\varepsilon, m}(t,Y(t,x)).
\end{equation}
If we compute explicitly  the derivative in time we get
\begin{align}
\int_{-\infty}^{\infty}\int_{\mathcal{F}(t)}\partial_t u_{\varepsilon}\cdot u_{\varepsilon} dx dt 
= & \sum_{l,m}\int_{-\infty}^{\infty}\int_{\mathcal{F}(t)} \partial_t \partial_m X_l(t,Y(t,x))v_{\varepsilon, m}(t,Y(t,x))u_{\varepsilon, l}(t,x) dx dt           \label{I}      \\
& +  \sum_{l,m,k} \int_{-\infty}^{\infty}\int_{\mathcal{F}(t)} \partial_t Y_k(t,x)\partial_k\partial_m X_l(t,Y(t,x))v_{\varepsilon,m}(t,Y(t,x))u_{\varepsilon, l}(t,x)dxdt \label{II}\\
& + \sum_{l,m}\int_{-\infty}^{\infty}\int_{\mathcal{F}(t)} \partial_m X_l(t,Y(t,x))\partial_t v_{\varepsilon, m}(t,Y(t,x))u_{\varepsilon, l}(t,x) dx dt \label{III} \\
& +  \sum_{l,m,k} \int_{-\infty}^{\infty}\int_{\mathcal{F}(t)} \partial_m X_l(t,Y(t,x))\partial_t Y_k(t,x) \partial_k v_{\varepsilon,m}(t,Y(t,x))u_{\varepsilon,l}(t,x)dxdt. \label{IV}
\end{align} 
Using the change of variables and the fact that the determinant of the Jacobian of the change of variables is $1$ we have
\begin{align*}
(\ref{III}) =  & \sum_{l,m,f} \int_{-\infty}^{\infty}\int_{\mathcal{F}_0} \partial_m X_l(t,y)\partial_t v_{\varepsilon, m}(t,y)\partial_f X_l(t,y) v_{\varepsilon, f}(t,y) dy dt \\
= & \sum_{l,m,f} \int_{-\infty}^{\infty}\int_{\mathcal{F}_0} \int_{-\infty}^{\infty}\eta_{\varepsilon}'(t-s)v_m(s,y) ds \partial_m X_l(t,y)\partial_f X_l(t,y) v_{\varepsilon, f}(t,y) dy dt  \\
= & -\sum_{l,m,f} \int_{-\infty}^{\infty}\int_{\mathcal{F}_0} v_m(s,y) \int_{-\infty}^{\infty}\eta_{\varepsilon}'(s-t) \partial_m X_l(t,y)\partial_f X_l(t,y) v_{\varepsilon, f}(t,y) dt dy ds \\
= & -\sum_{l,m,f} \int_{-\infty}^{\infty}\int_{\mathcal{F}_0} v_m(s,y) \partial_s \left(\int_{-\infty}^{\infty}\eta_{\varepsilon}(s-t) \partial_m X_l(t,y)\partial_f X_l(t,y) v_{\varepsilon, f}(t,y) dt \right) dy ds \\
= & -\sum_{l,m,f,n} \int_{-\infty}^{\infty}\int_{\mathcal{F}_0} \partial_n Y_m(s,X(s,y))u_n(s,X(s,y)) \partial_s \left(\int_{-\infty}^{\infty}\eta_{\varepsilon}(s-t) \partial_m X_l(t,y)\partial_f X_l(t,y) v_{\varepsilon, f}(t,y) dt \right) dy ds, \\
\end{align*}
where in the second line we use the definition of convolution, in the third line we exchange the integration in $ t $ with the integration in $ s $ and we use the fact that $ \eta $ is even which implies that $ \eta' $ is odd, in the fourth line we use a property of the derivative of a convolution and in the last one we use the relation (\ref{ext:v}) between $ u $ and $ v $.

Going back to the original variables we get that the last line is equal to minus
\begin{align}
\label{1}
\sum_{l,m,f,n} \int_{-\infty}^{\infty}\int_{\mathcal{F}(s)} \partial_n Y_m(s,x)u_n(s,x) \partial_s \left(\int_{-\infty}^{\infty}\eta_{\varepsilon}(t-s) \partial_m X_l(t,Y(s,x))\partial_f X_l(t,Y(s,x)) v_{\varepsilon, f}(t,Y(s,x)) dt \right) dx ds
\end{align}
plus the following three terms
\begin{align}
\label{2}
\sum_{l,m,f,n,e} \int_{-\infty}^{\infty}\int_{\mathcal{F}(s)} \partial_n Y_m(s,x)u_n(s,x) \int_{-\infty}^{\infty}\eta_{\varepsilon}(t-s)  \partial_s Y_e(s,x) \partial_e \partial_m  X_l(t,Y(s,x)) \partial_f & X_l(t,Y(s,x)) \nonumber \\& v_{\varepsilon, f}(t,Y(s,x)) dt dx ds,
\end{align}
\begin{align}
\label{3}
\sum_{l,m,f,n,e} \int_{-\infty}^{\infty}\int_{\mathcal{F}(s)} \partial_n Y_m(s,x)u_n(s,x) \int_{-\infty}^{\infty}\eta_{\varepsilon}(t-s) \partial_m  X_l(t,Y(s,x)) \partial_s Y_e(s,x) \partial_e \partial_f & X_l(t,Y(s,x)) \nonumber \\ & v_{\varepsilon, f}(t,Y(s,x)) dt dx ds,
\end{align}
\begin{align}
\label{4}
\sum_{l,m,f,n,e} \int_{-\infty}^{\infty}\int_{\mathcal{F}(s)} \partial_n Y_m(s,x)u_n(s,x) \int_{-\infty}^{\infty}\eta_{\varepsilon}(t-s) \partial_m  X_l(t,Y(s,x)) \partial_f & X_l(t,Y(s,x)) \nonumber \\ & \partial_s Y_e(s,x) \partial_e v_{\varepsilon, f}(t,Y(s,x)) dt dx ds.
\end{align}
We isolate $ u_n(s,x) $. To do this we note that (\ref{1}) is equal to the difference of the following two terms
\begin{align}
\label{11}
\sum_{l,m,f,n} \int_{-\infty}^{\infty}\int_{\mathcal{F}(s)} u_n(s,x) \partial_s \left(\int_{-\infty}^{\infty}\eta_{\varepsilon}(t-s) \partial_n Y_m(s,x) \partial_m X_l(t,Y(s,x))\partial_f X_l(t,Y(s,x)) v_{\varepsilon, f}(t,Y(s,x)) dt \right) dx ds 
\end{align}
\begin{align}
\label{22}
\sum_{l,m,f,n} \int_{-\infty}^{\infty}\int_{\mathcal{F}(s)} u_n(s,x) \int_{-\infty}^{\infty}\eta_{\varepsilon}(t-s) \partial_s \partial_n Y_m(s,x) \partial_m X_l(t,Y(s,x))\partial_f X_l(t,Y(s,x)) v_{\varepsilon, f}(t,Y(s,x)) dt  dx ds. 
\end{align}
We arrive at
\begin{equation*}
\int_{-\infty}^{\infty}\int_{\mathcal{F}(t)}\partial_t u_{\varepsilon}\cdot u_{\varepsilon} dx dt = (\ref{I}) + (\ref{II}) - (\ref{11})+(\ref{22})+(\ref{2})+(\ref{3})+(\ref{4})+(\ref{IV}).
\end{equation*}
Notice that as $ \varepsilon $ goes to $ 0 $ we have 

\begin{gather*}
(\ref{I}) \to \sum_{l,m,n}\int_{-\infty}^{\infty}\int_{\mathcal{F}(t)} \partial_t \partial_m X_l(t,Y(t,x))\partial_n Y_m(t,x)u_n( t,x)u_l(t,x) dx dt ,\\
(\ref{22}) \to \sum_{l,m,n} \int_{-\infty}^{\infty}\int_{\mathcal{F}(s)} u_n(t,x) \partial_t \partial_n Y_m(t,x) \partial_m X_l(t,Y(t,x)) u_l(t,x) dx dt, \\
(\ref{2}) \to  
\sum_{l,m,n,e} \int_{-\infty}^{\infty}\int_{\mathcal{F}(s)} \partial_n Y_m(t,x)u_n(t,x)\partial_t Y_e(t,x) \partial_e \partial_m  X_l(t,Y(t,x))u_l(t,x) dx dt.
\end{gather*}
Moreover using that $ \partial_t(\nabla X(t,Y(t,x)\nabla Y(t,x)) = 0 $, we arrive at
\begin{equation*}
(\ref{I}) + (\ref{22}) + (\ref{2})  \to 0.
\end{equation*}

We study the terms (\ref{II}), (\ref{3}), (\ref{4}), (\ref{IV}). As $ \varepsilon $ goes to $ 0 $ we have 

\begin{gather*}
(\ref{II}),(\ref{3}) \to \sum_{l,m,k} \int_{-\infty}^{\infty}\int_{\mathcal{F}(t)} \partial_t Y_k(t,x)\partial_k\partial_m X_l(t,Y(t,x))v_m(t,Y(t,x))u_l(t,x)dxdt \\
(\ref{4}), (\ref{IV}) \to \sum_{l,m,k} \int_{-\infty}^{\infty}\int_{\mathcal{F}(t)} \partial_m X_l(t,Y(t,x))\partial_t Y_k(t,x) \partial_k v_m(t,Y(t,x))u_l(t,x)dxdt.
\end{gather*} 
Moreover it holds
\begin{align*}
\sum_{m} \partial_k\partial_m X_l(t,Y(t,x)) & v_m(t,Y(t,x)) + \partial_m X_l(t,Y(t,x))\partial_k v_m(t,Y(t,x)) \\
= & \sum_{m,i,j} \partial_k X_j(t,Y(t,x)) \partial_j Y_i(t,x) \partial_i\partial_m X_l(t,Y(t,x)) v_m(t,Y(t,x))\\ & + \sum_{m,i,j} \partial_k X_j(t,Y(t,x)) \partial_j Y_i(t,x)\partial_m X_l(t,Y(t,x))\partial_i v_m(t,Y(t,x)) \\
= & \sum_{j}  \partial_k X_j(t,Y(t,x)) \partial_j u_l(t,x),
\end{align*}
where we multiply by the identity matrix $\sum_j \partial_k X_j(t,Y(t,x)) \partial_j Y_i(t,x) = \delta_{k i} $ and by the fact that $  Y $ is the inverse of $ X $, it holds $ X(t,Y(t,x)) = x $, which implies that
\begin{equation*}
0 = \partial_t \left( X_j(t,Y(t,x)) \right) = \partial_t X_j(t,Y(t,x)) + \sum_{k} \partial_k X_j(t,Y(t,x))\partial_t Y_k(t,x).
\end{equation*}
This last two equalities lead us to prove that
\begin{align*}
\sum_{m,k} \partial_t Y_k(t,x)\partial_k\partial_m X_l(t,Y(t,x)) v_m(t,Y(t,x)) & + \partial_t Y_k(t,x)\partial_m X_l(t,Y(t,x))\partial_k v_m(t,Y(t,x)) \\
= & \sum_{j, k} \partial_t Y_k(t,x) \partial_k X_j(t,Y(t,x)) \partial_j u_l(t,x) \\
= & - \sum_{j} \partial_t X_j(t,Y(t,x))\partial_j u_l(t,x).
\end{align*}
which implies that 
\begin{align*}
(\ref{II})+(\ref{3})+(\ref{4})+(\ref{IV}) \to - 2 \sum_{j,l} \int_{-\infty}^{+\infty} \int_{\mathcal{F}(t)} \partial_t X_j(t,Y(t,x))\partial_j u_l(t,x) u_l (t,x) dx dt.
\end{align*}
We note that $ X $ (defined in (\ref{X:def})) does not change in time $ (-\infty, 0] $ and in $ [\tau,+\infty) $ and by an integration by parts we have 
\begin{align*}
(\ref{II})+(\ref{3})+(\ref{4})+(\ref{IV}) \to - 2 \int_{0}^{\tau} \int_{\mathcal{F}(t)} [(u(t,x)\cdot\nabla) u(t,x)]\cdot u(t,x) dx dt.
\end{align*}
Recall the definition of $ U_{\varepsilon}$ from (\ref{def:UE}) and let $ U_{\varepsilon, n} $ the n-th component of $ U_{\varepsilon}$, with this notation, it holds
\begin{align*}
(\ref{11}) =  \int_{-\infty}^{+\infty} \int_{\mathcal{F}(t)} u \cdot \partial_t U_{\varepsilon} dx dt = & \int_{0}^{\tau} \int_{\mathcal{F}(t)} u \cdot \partial_t U_{\varepsilon} dx dt \\ &+ \frac{1}{2}\int_{\mathcal{F}_0} u(0,.)\cdot U_{\varepsilon}(0,.)dx-\frac{1}{2}\int_{\mathcal{F}(\tau)} u(\tau,.)\cdot U_{\varepsilon}(\tau,.) dx +o(\varepsilon).
\end{align*}
To conclude the prove of (\ref{BIGE}) we note that  
\begin{align*}
\int_{0}^{\tau} & \int_{\mathcal{F}(t)} \left( \partial_t u_{\varepsilon} \cdot  u_{\varepsilon} + (u\cdot \nabla) u_{\varepsilon}\cdot u_{\varepsilon} \right) dx dt  \\
 =  &\int_{-\infty}^{+\infty} \int_{\mathcal{F}(t)} \partial_t u_{\varepsilon} \cdot  u_{\varepsilon} dx dt + \frac{1}{2}\int_{\mathcal{F}_0} |u(0,.)|^2dx-\frac{1}{2}\int_{\mathcal{F}(\tau)} |u(\tau,.)|^2dx +o(\varepsilon) + \int_{0}^{\tau} \int_{\mathcal{F}(t)} (u\cdot \nabla) u_{\varepsilon}\cdot u_{\varepsilon} dx dt  \\
= &(\ref{II})+(\ref{3})+(\ref{4})+(\ref{IV})-(\ref{11})- \int_{\mathcal{F}_0} |u(0,.)|^2dx+\int_{\mathcal{F}(\tau)} |u(\tau,.)|^2dx +o(\varepsilon) + \int_{0}^{\tau} \int_{\mathcal{F}(t)} (u\cdot \nabla) u_{\varepsilon}\cdot u_{\varepsilon} dx dt \\
= & - \int_{0}^{\tau} \int_{\mathcal{F}(t)} \left(u \cdot \partial_t U_{\varepsilon} +(u\cdot \nabla) U_{\varepsilon}\cdot u \right) dx dt  - \frac{1}{2}\int_{\mathcal{F}_0} u(0,.)\cdot U_{\varepsilon}(0,.) dx+\frac{1}{2}\int_{\mathcal{F}(\tau)} u(\tau,.)\cdot U_{\varepsilon}(\tau,.)dx +o(\varepsilon). 
\end{align*}

\end{proof}

With this last claim the proof of the energy equality is done. 
To show the continuity (\ref{con:int}) in time of the solution we follow the standard technique, but we will not present all the details because the computations are similar to the one above. The idea is to consider the approximation sequence $ u_{\varepsilon} $ defined in (\ref{u:appr}) and to prove that the sequence is a Cauchy sequence in $ C^0([0,\tau];L^2_{\sigma}(\Omega))$. To do so, we note that $ u_{\varepsilon}(0) \to u(0) $ in $ \mathcal{H}_S $, then     
\begin{align*}
\|u_{\varepsilon}(t,.)-u_{\delta}(t,.)\|_{\mathcal{H}_S} = & \|u_{\varepsilon}(0,.)-u_{\delta}(0,.)\|_{\mathcal{H}_S} + 2 \left\langle \frac{D}{dt} (u_{\varepsilon}-u_{\delta}), u_{\varepsilon}-u_{\delta} \right\rangle  \\
= & \|u_{\varepsilon}(0,.)-u_{\delta}(0,.)\|_{\mathcal{H}_S} + 2 \left\langle \frac{D}{dt} u, U_{\varepsilon,\varepsilon}-U_{\delta,\varepsilon}-U_{\varepsilon,\delta}+U_{\delta,\delta} \right\rangle  + o(\varepsilon,\delta) \\
& \to 0 \text{ as } \varepsilon, \delta \to 0,
\end{align*}
where, for $ A, B \in \{\varepsilon, \delta \}$, we set
\begin{equation*}
U_{A,B}(t,x) = \sum_{m, l , f} \int_{-\infty}^{\infty}\eta_{A}(s-t) \nabla Y_m(t,x) \partial_m X_l(s,Y(t,x))\partial_f X_l(s,Y(t,x)) v_{B, f}(s,Y(t,x)) ds,
\end{equation*}

The computation above prove that $ u_{\varepsilon} $ is a Cauchy sequence in $ C^0([0,\tau];L^2_{\sigma}(\Omega)) $, in fact $ L^2_{\sigma} $ and $ \mathcal{H}_{S(t)} $ norm are equivalent until $ \sup_{t \in[0,\tau]}\dist (\mathcal{S}(t),\partial \Omega) > 0 $, which implies that $ u \in C^0([0,\tau]; L^2_{\sigma}(\Omega))$, for any $ \tau \in [0,T) $.  

\section{Regularity in time}

Before going directly to the proof of uniqueness we present some estimates that are going to be useful in what follows. Fix now $ (\mathcal{S}, u) $ a weak solution of (\ref{NS1-2d})-(\ref{Solideci-2d}) in a time interval $[0,T)$, with $T >0$. 
We define: 
\begin{equation*}
\mathcal{F}_T = \cup_{t\in (0,T)}\{t\}\times \mathcal{F}(t).
\end{equation*}    

The first estimates are the following.

\begin{Lemma}
\label{lemma1}
The following holds true
\begin{equation*}
((u\cdot\nabla) u, u) \in L^{\frac{4}{3}}(\mathcal{F}_T; \mathbb{R}^4).
\end{equation*}
\end{Lemma}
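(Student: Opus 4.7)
The plan is to combine the a priori regularity of $u$ built into the definition $u \in \mathcal{V}_T$, namely $u \in L^\infty(0,T;L^2_\sigma(\Omega))$ and $u_F \in L^2(0,T;H^1_\sigma(\Omega))$, with the two-dimensional Gagliardo--Nirenberg inequality. This is the same 2D interpolation trick that underpins the classical $L^2$-based uniqueness theory for Leray solutions of the Navier--Stokes equations.

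First I would fix $T' \in (0,T)$ and use that, by Theorem \ref{THM:exw}, the solid $\mathcal{S}(t)$ stays at a uniformly positive distance from $\partial \Omega$ on $[0,T']$, so that $\mathcal{F}(t)$ is a smooth planar domain whose shape depends on $t$ only through a rigid motion of $\mathcal{S}_0$. The planar Gagliardo--Nirenberg inequality
\[
\|u_F(t)\|_{L^4(\mathcal{F}(t))}^4 \leq C(t)\, \|u_F(t)\|_{L^2(\mathcal{F}(t))}^2 \bigl(\|u_F(t)\|_{L^2(\mathcal{F}(t))}^2 + \|\nabla u_F(t)\|_{L^2(\mathcal{F}(t))}^2\bigr)
\]
then applies slice by slice. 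Integrating in $t$ and invoking $u \in L^\infty(0,T';L^2)$ and $u_F \in L^2(0,T';H^1)$ would give
\[
\|u\|_{L^4(\mathcal{F}_{T'})}^4 \leq C \|u\|_{L^\infty(0,T';L^2)}^2 \, \|u_F\|_{L^2(0,T';H^1)}^2 < +\infty.
\]

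Second, Hölder's inequality with the exponent identity $1/(4/3)=1/4+1/2$, applied in space and then in time, gives
\[
\|(u\cdot\nabla)u\|_{L^{4/3}(\mathcal{F}_{T'})} \leq \|u\|_{L^4(\mathcal{F}_{T'})}\, \|\nabla u_F\|_{L^2(\mathcal{F}_{T'})} < +\infty,
\]
which handles the nonlinear component. The remaining component $u$ itself lies in $L^{4/3}(\mathcal{F}_{T'})$ for free, since $\mathcal{F}_{T'}$ has finite measure and $u \in L^\infty(0,T';L^2)\subset L^2(\mathcal{F}_{T'}) \subset L^{4/3}(\mathcal{F}_{T'})$. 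When $T < \infty$ one then lets $T' \uparrow T$; for $T = +\infty$ the bound on every finite subinterval is exactly what will be used later.

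The main obstacle I anticipate is the time-uniform control of the Gagliardo--Nirenberg constant $C(t)$ over the moving fluid domain $\mathcal{F}(t)$. I would dispatch this by transporting the inequality to the reference configuration $\mathcal{F}_0$ through the volume-preserving $C^\infty$-diffeomorphism $X(t,\cdot)$ of Claim \ref{cla:cov}: since $\det \nabla X \equiv 1$ and $X$ together with $Y=X^{-1}$ has all derivatives bounded in $L^\infty$ in time, the $L^p$ and $H^1$ norms on $\mathcal{F}(t)$ are equivalent to the corresponding norms on $\mathcal{F}_0$ with constants independent of $t \in [0,T']$, and the reference-configuration Gagliardo--Nirenberg constant is fixed once and for all.
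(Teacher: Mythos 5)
Your argument is exactly the one the paper intends: its proof of Lemma \ref{lemma1} consists of the single sentence ``the estimates follow by interpolation inequality and H\"older inequality,'' and your Ladyzhenskaya-type $L^4$ interpolation in 2D followed by H\"older with $1/(4/3)=1/4+1/2$ is the standard way to fill that in. The additional care you take about the uniformity of the Gagliardo--Nirenberg constant on the moving domain (via the volume-preserving diffeomorphism of Claim \ref{cla:cov}) is a legitimate refinement of the same approach, not a different one.
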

\begin{proof}
The estimates follow by interpolation inequality and H\"older inequality.
\end{proof}

The second estimates are related to the regularization result due to viscosity. 

\begin{Lemma}
\label{reg:PROP}
There exists $ \tilde{T} \leq T $ such that the following hold true
\begin{equation*}
tu \in L^{\frac{4}{3}}((0,\tilde{T});W^{2,\frac{4}{3}}(\mathcal{F}(t))), \quad (t\partial_t u, t \nabla p)\in L^{\frac{4}{3}}(\mathcal{F}_{\tilde{T}}; \mathbb{R}^4).
\end{equation*}
\end{Lemma}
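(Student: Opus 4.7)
My plan is to view the Navier--Stokes equations in $\mathcal F(t)$ as a perturbation of a linear Stokes system with Navier slip-with-friction (Robin) boundary conditions, forced by the quadratic term $-(u\cdot\nabla)u$, and then invoke the maximal $L^{4/3}$-regularity provided by Theorem \ref{STR:sol} (based on the $\mathcal R$-boundedness in \cite{shimada}). The multiplicative weight $t$ is the standard device to compensate for the low regularity of the initial datum $u_0\in\mathcal H_{S_0}$, which does not belong to the trace space associated to $L^{4/3}$-maximal regularity.

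First I would freeze the geometry: using the change of variables $X(t,\cdot)$ from Claim \ref{cla:cov}, set
\begin{equation*}
\tilde u(t,y)=\nabla Y(t,X(t,y))\,u(t,X(t,y)),\qquad \tilde p(t,y)=p(t,X(t,y)),
\end{equation*}
so that $\tilde u$ lives on the fixed domain $\mathcal F_0$ and still satisfies $\div\tilde u=0$. The fluid equations become a system of the form $\partial_t\tilde u-\Delta\tilde u+\nabla\tilde p=\mathcal R[\tilde u]-\widetilde{(u\cdot\nabla)u}$ with Robin-type boundary conditions on $\partial\mathcal S_0$ and $\partial\Omega$, where $\mathcal R[\tilde u]$ collects all commutator terms generated by the change of variables. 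Because $X(0,\cdot)=\mathrm{Id}$ and $X\in W^{1,\infty}(0,\tau;C^k(\Omega))$, the operator $\mathcal R$ involves only lower-order derivatives of $\tilde u$ with coefficients that are small in $L^\infty$ on a short time interval.

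Next I would apply Theorem \ref{STR:sol} to the linearised Stokes problem on $\mathcal F_0$, but first multiplied by the weight $t$. Setting $w=t\tilde u$, $q=t\tilde p$, we obtain
\begin{equation*}
\partial_t w-\Delta w+\nabla q=t\bigl(\mathcal R[\tilde u]-\widetilde{(u\cdot\nabla)u}\bigr)+\tilde u,\qquad \div w=0,\qquad w|_{t=0}=0,
\end{equation*}
with the same Robin boundary conditions. Zero initial data removes the trace obstruction, so maximal $L^{4/3}$-regularity yields
\begin{equation*}
w\in L^{4/3}(0,\tilde T;W^{2,4/3}(\mathcal F_0)),\qquad \partial_t w,\ \nabla q\in L^{4/3}((0,\tilde T)\times\mathcal F_0),
\end{equation*}
provided the right-hand side belongs to $L^{4/3}$ of the cylinder. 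The contribution $\tilde u$ lies in $L^\infty(L^2)\subset L^{4/3}$ by the definition of a weak solution; the convective part is in $L^{4/3}$ by Lemma \ref{lemma1} together with the smoothness of $X,Y$ (which preserves all relevant $L^p$ norms since $\det\nabla X=1$).

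The main obstacle, as often with maximal regularity on a moving domain, is to absorb the commutator term $t\mathcal R[\tilde u]$ on the left-hand side. This is where the smallness of $\tilde T$ enters: the coefficients of $\mathcal R$ vanish at $t=0$, so on a small enough interval $[0,\tilde T]$ the operator $\mathrm{Id}-\mathcal K_{\tilde T}$ obtained by inverting $(\partial_t-\Delta+\nabla\,\mathrm{div}^{-1})$ and composing with $\mathcal R$ is a contraction on the maximal regularity space, and the perturbed problem can be solved by a Neumann series / fixed point argument. Once the estimate is established on the reference domain, transporting back to $\mathcal F(t)$ through $X(t,\cdot)$ (a volume-preserving $C^\infty$-diffeomorphism with uniform $W^{1,\infty}(C^k)$ bounds) preserves the $L^{4/3}(W^{2,4/3})$ and $L^{4/3}$ norms up to constants, which yields precisely the two claimed regularities for $tu$, $t\partial_t u$ and $t\nabla p$.
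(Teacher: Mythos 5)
Your high-level strategy --- weight by $t$ to kill the initial trace, fix the domain with the diffeomorphism $X$, use the maximal $L^{4/3}$-regularity coming from Shimada's $\mathcal R$-boundedness, and absorb the change-of-variables commutators by a short-time fixed point --- is indeed the engine behind the paper's proof (it is the content of Theorem \ref{STR:sol} and its proof). But there are two genuine gaps in how you deploy it. First, and most importantly, you apply maximal regularity directly to $w=t\tilde u$: you write down a system satisfied by $w$ with an $L^{4/3}$ right-hand side and conclude that $w$ itself has maximal regularity. Maximal regularity is a statement about the output of the solution operator; it does not by itself upgrade the regularity of a \emph{given} weak solution. The paper closes this loop with a four-step argument: (1) weak solutions of the auxiliary linear problem (\ref{tup1})--(\ref{tup2}) are unique (one may test with the difference of two solutions because they live on the same, prescribed domain), (2) a strong $L^{4/3}$-$L^{4/3}$ solution exists for short time (Theorem \ref{STR:sol}), (3) every strong solution is a weak solution, and (4) $tu$ is a weak solution of the auxiliary problem, so uniqueness identifies $tu$ with the strong solution. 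Without step (1), a weak--strong uniqueness statement for the linear auxiliary problem, your argument does not conclude. Note also that the existence of the pressure is itself part of what is being proved --- it is produced by the strong solution of the auxiliary problem --- so you cannot start from $\tilde p(t,y)=p(t,X(t,y))$ for a weak solution.

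Second, you treat the system as a pure fluid problem with Robin boundary conditions, but the boundary data on $\partial\mathcal S(t)$ involves the unknown rigid velocity $u_{\mathcal S}=\mathfrak l+\mathfrak r\,x^{\perp}$, which is coupled back to the fluid through the Newton equations. The auxiliary system in the paper therefore has unknowns $(\mathfrak l,\mathfrak r,v,p)$ and carries extra source terms $m\,l(t)$ and $\mathcal J\,r(t)$ in the body equations (arising from $\partial_t(t\mathfrak l)=\mathfrak l+t\mathfrak l'$, and similarly for $\mathfrak r$), and the maximal regularity must be established for the coupled fluid--structure operator $\mathcal A_{FS}$ --- shown in the paper to be a finite-rank, hence relatively bounded with bound $0$, perturbation of the Stokes--Robin operator, following Maity--Tucsnak --- not merely for the Stokes operator on $\mathcal F_0$. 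Your sketch cites Theorem \ref{STR:sol} in passing but then argues as if only the fluid equation were present; handling this coupling is precisely what most of Section \ref{rel} is devoted to.
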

The proof of Lemma \ref{reg:PROP} is postponed to Section \ref{pro:le2}. This is the analogous result of  \cite[Proposition 3]{UnGS}.  

\section{Proof of Theorem \ref{THE:uni}}
Let $ (\mathcal{S}_1, u_1) $ and $ ( \mathcal{S}_2, u_2) $ two weak solutions of (\ref{NS1-2d})-(\ref{Solideci-2d}) on some common time interval $[0,T)$, with $T >0$. Our goal is to prove that $ (\mathcal{S}_1, u_1) = ( \mathcal{S}_2, u_2) $. We follow the same strategy than in \cite{UnGS} where the case of no-slip condition was tackled. The difficulties of the proof is due to the fact that we cannot take naively the difference of the two weak formulations and test with $ u_1-u_2 $ because the functions $ u_1 $ and $ u_2 $ are not even defined in the same domain. We use a change of variables that sends $ \mathcal{S}_2 $ to $ \mathcal{S}_1 $, to write down the weak formulation satisfied by $ \tilde{u} $ which is $ u_2 $ in this new variable, to take the difference of the two weak formulations associated with $ u_1 $ and $ \tilde{u}_2 $, to test the resulting equation  with $ u_1 -\tilde{u}_2 $ and to conclude by a Gr\"omwall estimate.

We recall that if $ (\mathcal{S}_i, u_i) $ is a weak solution, then for any $ \tau \in (0,T) $ there exist $ l_i \in C([0,\tau];\mathbb{R}^2)$ and $ r_i \in C([0,\tau];\mathbb{R}) $ such that 
$\mathcal{S}_i = \mathcal{S}^{l_i,r_i}$
and there exists  $ \delta > 0 $ such that $ \dist( \mathcal{S}_i, \partial \Omega) > \delta $ for any $ i = 1,2 $ and for any $ t \in [0,\tau] $.

We define $ X_i $ as in Claim \ref{cla:cov}, where in addition we ask that 
 $ X_i $ coincide with the solid motion associated with $\mathcal{S}_i $ for any $ (t,x) $ such that $ \dist(x, \partial \Omega) \geq \delta/2 $, 
 $ X_i $ is the identity in a $ \delta / 4 $ neighbourhood of $\delta \Omega $, i.e. $ X_i(t,x) = x $ for any $ (t,x) $ such that  $ \dist(x, \partial \Omega) \leq \delta/4 $,
and we define the change of variables $ \varphi: [0,\tau]\times\Omega \to \Omega $ and its inverse $ \psi: [0,\tau]\times\Omega \to \Omega $ as follow
$\varphi(t,x) = X_2(t,X^{-1}_1(t,x)) $ and $ \psi(t,y) = X_1(t,X_2^{-1}(t,y)).$
We easily see that $ \varphi, \psi \in C^1(0,\tau; C^{\infty}(\Omega))$. 

We can define 
\begin{align*}
\tilde{u}_2(t,x) = & \nabla \psi(t, \varphi(t, x)) u_2(t,\varphi(t,x)), \\
\tilde{u}_{F,2}(t,x) = & \nabla \psi(t, \varphi(t,x)) u_{F,2}(t, \varphi(t,x)), \\
\tilde{u}_{S,2}(t,x) = & \nabla \psi(t, \varphi(t,x)) u_{S,2}(t, \varphi(t,x)).
\end{align*}
Note that $ u_{S,2}(t,\bar{x}) = l_2(t) + (\bar{x}-h_2(t))^{\perp}r_2(t)$ then we have
\begin{align*}
\tilde{u}_{S,2}(t,x) = & \nabla \psi (t, \varphi(t,x) )\left(l_2(t) + Q_2(t)Q_1^{T}(t)(x-h_1(t))^{\perp}r_2(t) \right) \\
= & \nabla \psi (t, \varphi(t,x) )l_2(t) + (x-h_1(t))^{\perp} r_2(t). 
\end{align*}
so we define $ \tilde{l}_2(t) = Q_1(t)Q_2(t)^{T}(t)l_2(t) $ and $ \tilde{r}_2(t) = r_2(t)$, and finally by Lemma \ref{reg:PROP} in the previous section we have proved that for a short time we have improved regularity 
that leads us to define the pressure $ p_2(t,\bar{x})$, so we define $ \tilde{p}_2(t,x) = p_2(t,\varphi(t,x))$. 
We are now able to write the equation satisfied by $ \tilde{u}_2 $. We use Einstein's summation convention and we refer to \cite{UnGS} for more explicit computation.
\begin{align}
\label{long:long}
0 = & \partial_t\tilde{u}^i_2+\tilde{u}_2^j\partial_j\tilde{u}^i_2+\partial_i\tilde{p}_2-\Delta\tilde{u}^i_2 \nonumber \\
& +(\partial_k\varphi^i-\delta_{ik})\partial_t\tilde{u}^k_2+\partial_k\varphi^i\partial_l\tilde{u}^k_2(\partial_t\psi^l)+(\partial_k\partial_t\varphi^i)\tilde{u}^k_2+(\partial_{kl}^2\varphi^i)(\partial_t\psi^l)\tilde{u}^k_2  \nonumber \\
& +\tilde{u}^l_2\partial_l\tilde{u}^k_2(\partial_k\varphi^i-\delta_{ik})+(\partial_{lk}^2\varphi^i)\tilde{u}^l_2\tilde{u}^k_2+\partial_k\tilde{p}_2(\partial_i\psi^k-\delta_{ik}) \\
& -\partial_j\psi^m(\partial_{mk}^2\varphi^i)\partial_l\tilde{u}^k_2\partial_j\psi^l-(\partial_k\varphi^i\partial_j\psi^m\partial_j\psi^l-\delta_{ik}\delta_{jm}\delta_{jl})\partial_{ml}^2\tilde{u}^k_2  \nonumber  \\
& -\partial_k\varphi^i\partial_l\tilde{u}^k_2(\partial_{jj}^2\psi^l)  \nonumber  \\
& -\partial_j\psi^m(\partial_{mlk}^3\varphi^i)\partial_j\psi^l\tilde{u}^k_2-(\partial_{lk}^2\varphi^i)\partial_{jj}^2\psi^l\tilde{u}^k_2-(\partial_{lk}^2\varphi^i)\partial_j\psi^l\partial_j\psi^m\partial_m\tilde{u}^k_2. \nonumber 
\end{align}
The equation above is true almost everywhere if we restrict the time interval where the estimates of Lemma \ref{reg:PROP} hold. 
We multiply the equation above with a test function $ \varphi \in \mathcal{W}_{\tau} $ associated with the motion of $\mathcal{S}_1 $ to arrive at
\begin{gather*}
 -\int_0^{\tau} \int_{\mathcal{F}_1(t)}\tilde{u}_{F,2}\cdot \partial_t \varphi_F-\int_0^{\tau}\int_{\mathcal{S}_1(t)}\rho_S \tilde{u}_{S,2}\cdot \partial_t \varphi_S +\int_0^{\tau}\int_{\mathcal{F}_1(t)}  \tilde{u}_{F,2} \otimes \tilde{u}_{F,2} : \nabla \varphi_F  
 + 2 \int_0^{\tau} \int_{\mathcal{F}_1(t)} D\tilde{u}_{F,2} :D\varphi_F \\ + 2 \alpha \int_0^{\tau} \int_{\partial \Omega} \tilde{u}_{F,2} \cdot \varphi_F + 2  \alpha \int_0^{\tau} \int_{\partial \mathcal{S}_1(t)} (\tilde{u}_{F,2}-\tilde{u}_{S,2})\cdot (\varphi_F -\varphi_S) - \int_{\mathcal{F}_1(0)} \tilde{u}_{F,2}\cdot \varphi_F|_{t = 0}-\int_{\mathcal{S}_1(0)} \rho_S \tilde{u}_{S,2}\cdot\varphi_S|_{t=0} \\
 + \int_{\mathcal{F}_1({\tau})} (\tilde{u}_{F,2}\cdot \varphi_F)|_{t = {\tau}} + \int_{\mathcal{S}_1({\tau})} (\rho_S \tilde{u}_{S,2}\cdot\varphi_S)|_{t={\tau}} = -\int_0^{\tau} \int_{\mathcal{F}_1(t)} \tilde{f}\cdot \varphi_F dt dx ,
\end{gather*}
where $\tilde{f} $ are just the last five lines of (\ref{long:long}).
We denote by $
\hat{u} = u_1-\tilde{u}_2 $ ,
and we take the difference of the weak solution satisfies by $ u_1 $ and $ \tilde{u}_2 $ to obtain
\begin{gather*}
 -\int_0^{\tau} \int_{\mathcal{F}_1(t)}\hat{u}_{F}\cdot \partial_t \varphi_F-\int_0^{\tau}\int_{\mathcal{S}_1(t)}\rho_S \hat{u}_{S}\cdot \partial_t \varphi_S -\int_0^{\tau}\int_{\mathcal{F}_1(t)}  u_{F,1} \otimes \hat{u}_{F} : \nabla \varphi_F  
 + 2 \int_0^{\tau} \int_{\mathcal{F}_1(t)} D\hat{u}_{F} :D\varphi_F \\ + 2 \alpha \int_0^{\tau} \int_{\partial \Omega} \hat{u}_{F} \cdot \varphi_F + 2  \alpha \int_0^{\tau} \int_{\partial \mathcal{S}_1(t)} (\hat{u}_{F}-\hat{u}_{S})\cdot (\varphi_F -\varphi_S) - \int_{\mathcal{F}_1(0)} \hat{u}_{F}\cdot \varphi_F|_{t = 0}-\int_{\mathcal{S}_1(0)} \rho_S \hat{u}_{S}\cdot\varphi_S|_{t=0} \\
 + \int_{\mathcal{F}_1({\tau})} (\hat{u}_{F}\cdot \varphi_F)|_{t = {\tau}} + \int_{\mathcal{S}_1({\tau})} (\rho_S \hat{u}_{S}\cdot\varphi_S)|_{t={\tau}} = -\int_0^{\tau} \int_{\mathcal{F}_1(t)} \hat{u}_{F}\otimes u_{F,2}:\nabla \varphi_F dxdt - \int_0^{\tau} \int_{\mathcal{F}_1(t)} \tilde{f}\cdot \varphi_F dt dx. 
\end{gather*}
for any $ \varphi \in \mathcal{W}_{\tau} $.
To justify that we can  test the previous equation  with $ \hat{u} $, we follow the proof of Claim \ref{regu}, in particular we observe that $ \hat{u} \in \mathcal{U}_{\tau} $ and 
$\frac{D}{dt}u \in \mathcal{U}_{\tau}'$,
where $ \mathcal{U}_{\tau} = L^4(0,{\tau};L^4_{\sigma}(\Omega)) \cap  \mathcal{E}_{\tau} $ and 
$ \mathcal{U}_{\tau}' $ is the dual of $ \mathcal{U}_{\tau} $, where we identify $ \mathcal{U}_{\tau} $ in $ \mathcal{U}_{\tau}^{-1} $ through $ \mathcal{H}_{\tau}$.

We test with $ \hat{u} $ to obtain 
\begin{gather*}
\frac{1}{2}\int_{\mathcal{F}_1({\tau})}\hat{u}_{F}^2({\tau},.)dx +\frac{1}{2}\int_{\mathcal{S}_1({\tau})}\rho_S \hat{u}_{S}^2({\tau},.)dx + 2 \int_0^{\tau} \int_{\mathcal{F}_1(t)} D\hat{u}_{F}^2 dx dt  + 2 \alpha \int_0^{\tau} \int_{\partial \Omega} \hat{u}_{F}^2 ds dt \\ 
+ 2  \alpha \int_0^{\tau} \int_{\partial \mathcal{S}_1(t)} (\hat{u}_{F}-\hat{u}_{S})^2 ds dt 
 =  - \int_0^{\tau} \int_{\mathcal{F}_1(t)} \hat{u}_{F}\otimes u_{F,2}:\nabla \hat{u}_F dxdt  - \int_0^{\tau} \int_{\mathcal{F}_1(t)} \tilde{f}\cdot \hat{u}_F dt dx. 
\end{gather*}
We have to estimate the right hand side of the above inequality to finish the proof. The first of the two terms can be estimated via a standard technique i.e.
\begin{align*}
\left| \int_0^{\tau} \int_{\mathcal{F}_1(t)} \hat{u}_{F}\otimes u_{F,2}:\nabla \hat{u}_F dxdt   \right| \leq &  \varepsilon \int_0^{\tau} \int_{\mathcal{F}_1(t)} \nabla \hat{u}_F^2 dx dt +\frac{1}{\varepsilon}\int_0^{\tau} \left(\int_{\mathcal{F}_1(t)} \hat{u}_{F}^4 dx \right)^{1/2}\left(\int_{\mathcal{F}_1(t)} u_{F,2}^4 dx \right)^{1/2}dt \\
\leq & 2 \varepsilon \int_0^{\tau} \int_{\mathcal{F}_1(t)} \nabla \hat{u}_F^2 dx dt + \frac{C}{\varepsilon} \|u_2\|_{L^{\infty}(0,{\tau}; L^{2}(\Omega))}\int_0^{\tau} \int_{\mathcal{F}_1(t)} \hat{u}_F^2 dx \int_{\mathcal{F}_1(t)} \nabla u_{F,2}^2 dx dt. 
\end{align*}
For the second one we follow the estimate of \cite{UnGS}, in fact these estimates do not depend on the boundary condition of our problem, if we take as example the first term of $ \tilde{f} $ we have the estimates
\begin{align*}
\left|  \int_0^{\tau} \int_{\mathcal{F}_1(t)}  (\partial_k\varphi^i-\delta_{ik})\partial_t\tilde{u}^k_2 \hat{u}_{F} dx dt \right| \leq & \int_0^{\tau}   \left\| \frac{1}{t}(\partial_k\varphi^i-\delta_{ik}) \right\|_{L^{\infty}(\Omega)}\| t \partial_t\tilde{u}^k_2\|_{L^{4/3}(\mathcal{F}_1(t))} \| \hat{u}_{F} \|_{L^4(\mathcal{F}_1(t)))} dt \\
\leq & C \int_0^{\tau}   \| (\hat{l},\hat{r})\|_{L^{\infty}(0,\tau)}\| t \partial_t\tilde{u}^k_2\|_{L^{4/3}(\mathcal{F}_1(t))} \| \hat{u}_{F} \|_{L^4(\mathcal{F}_1(t)))} dt \\
\leq & C \int_0^{\tau} \| (\hat{l},\hat{r})\|_{L^{\infty}(0,\tau)}^2 \| t \partial_t\tilde{u}^k_2\|_{L^{4/3}(\mathcal{F}_1(t))}^{4/3} dt \\
& + C \int_0^{\tau} \| t \partial_t\tilde{u}^k_2\|_{L^{2/3}(\mathcal{F}_1(t))}^{2/3} \|\hat{u}\|_{L^2(\mathcal{F}_1(t))}\|\nabla \hat{u}\|_{L^2(\mathcal{F}_1(t))} dt \\
\leq & C \int_0^{\tau} \| (\hat{l},\hat{r})\|_{L^{\infty}(0,\tau)}^2 \| t \partial_t\tilde{u}^k_2\|_{L^{4/3}(\mathcal{F}_1(t))}^{4/3} dt \\
& + C \frac{1}{\varepsilon}\int_0^{\tau} \| \partial_t\tilde{u}^k_2\|_{L^{4/3}(\mathcal{F}_1(t))}^{2/3} \|\hat{u}\|_{L^2(\mathcal{F}_1(t))}^2 dt \\
& + C \varepsilon \int_0^{\tau} \|\nabla \hat{u}\|_{L^2(\mathcal{F}_1(t))}^2 dt. 
\end{align*} 
In an analogous way we can obtain the following estimates  
\begin{equation*}
\|\hat{u}(\tau,.)\|_{L^{2}(\mathcal{F}_1(t))}^2+m|l_{\hat{u}}({\tau})|^2+\mathcal{J}|r_{\hat{u}}({\tau})|^2 \leq \int_0^{\tau} C \mathcal{B}(t)\left[ \max_{s \in [0,t]}\|\hat{u}(s,.)\|_{L^{2}(\mathcal{F}_1(s))}^2+\max_{s \in [0, t]}\left|(\hat{h},\hat{\theta},\hat{l},\hat{r})\right|^2 \right] dt,
\end{equation*} 
where 
\begin{align*}
\mathcal{B}(t) = &  \|\tilde{u}_2\|_{L^{\infty}(0,T;L^{2}(\mathcal{F}_1(t))}\left(1+\|\nabla \tilde{u}_2(t,.)\|_{L^2(\mathcal{F}_1(t))}\right)+\|\tilde{u}_2\|_{L^{\infty}(0,T;L^2(\mathcal{F}_1(t)))}^{1/2}\|\nabla\tilde{u}_2(t)\|^{1/2}_{L^2(\mathcal{F}_1(t))}\|t\nabla\tilde{u}_2(t)\|_{L^4(\mathcal{F}_1(t))} \\
& +\left(\|t\partial_t\tilde{u}_2\|_{L^{4/3}(\mathcal{F}_1(t))}+\|t\tilde{u}_2\|_{W^{2,4/3}(\mathcal{F}_1(t))}+\|t\nabla\tilde{p}_2\|_{L^{4/3}(\mathcal{F}_1(t))}\right)^{4/3}.
\end{align*}
Moreover we have
\begin{equation*}
\frac{d}{dt}\left(|\hat{h}|^2 +|\hat{\theta}|^2 \right) \leq C (|\hat{l}|^2+|\hat{r}|^2+|\hat{h}|^2+|\hat{\theta}|^2)
\end{equation*}
and we have $ \mathcal{B} \in L^1(0,{\tau}) $. The Gr\"onwall lemma leads us to conclude  that  uniqueness holds locally in time. 

Moreover, by a continuation argument, we deduce that uniqueness holds on the whole time interval  $ [0,T) $ considered at the beginning of the section.

\section{Proof of Lemma \ref{reg:PROP}}{\label{pro:le2}}
\label{rel}

We go back to the proof of Lemma \ref{reg:PROP}. To do so we follow the proof of the analogous result in \cite[Proposition 3]{UnGS}. Fix $ (\mathcal{S},u) $ a weak solution of (\ref{NS1-2d})-(\ref{Solideci-2d}) and let $ l, r \in C^{0}(0,T) $ such that $ \mathcal{S} = \mathcal{S}^{l,r} $. Recall by Lemma \ref{lemma1} that
$((u\cdot\nabla) u, u) \in L^{\frac{4}{3}}(\mathcal{F}_T; \mathbb{R}^4)$.

Consider the following problem in the unknowns $ (\mathfrak{l}, \mathfrak{r}, v , p)$
\begin{eqnarray}
\displaystyle \frac{\partial v }{\partial t}  -  \Delta v + \nabla p = u-tu\cdot \nabla u  && \text{for} \ x\in \mathcal{F} (t), \label{tup1} \\
\div v   = 0 && \text{for} \ x\in \mathcal{F}(t) ,  \\
v  \cdot n =   v_\mathcal{S} \cdot n && \text{for}  \  x\in \partial \mathcal{S}  (t),    \\
(D(v) n )  \cdot \tau = - \alpha (v - v_\mathcal{S}) \cdot \tau   && \text{for} \ x\in \partial \mathcal{S}  (t),  \\
v  \cdot n =   0 && \text{for}  \  x\in \partial \Omega,    \\
(D(v) n )  \cdot \tau = - \alpha v \cdot \tau   && \text{for} \ x\in \partial \Omega ,  \\
m \mathfrak{l}' (t) &=& -  \int_{\partial  \mathcal{S} (t)} T(v,p)  n \, ds + m l(t),   \\
\mathcal{J} \mathfrak{r}' (t) &= & -  \int_{\partial  \mathcal{S} (t)} (x-  h (t) )^\perp \cdot T(v,p)  n   \, ds  +\mathcal{J} r(t),  \\
v |_{t= 0} = 0 & &  \text{for}  \  x\in  \mathcal{F}_0 ,   \\
h (0)= 0 , \ h' (0)=  0 , & &   r  (0)=  0. \label{tup2}
\end{eqnarray}
The following holds true:
\begin{enumerate}
\item Weak solution of (\ref{tup1})-(\ref{tup2}) are unique (we can test the equation with the difference of two solutions because the domain of the solutions is fixed);
\item There exists a unique strong solution $(\mathfrak{l},\mathfrak{r}, v, p)$ of (\ref{tup1})-(\ref{tup2}) in $ L^{4/3}-L^{4/3} $ for a short time;
\item Any strong solution of (\ref{tup1})-(\ref{tup2}) is a weak solution (by some integrations by parts);
\item $ tu $ is a weak solution of (\ref{tup1})-(\ref{tup2}), so it is strong. 
\end{enumerate}
This implies the regularity result of Lemma \ref{reg:PROP}. We note that the proof of point 1, 3 and 4 are exactly the same of the equivalent problems in \cite{UnGS}. It remains to prove point 2. 
We start by stating the Theorem \ref{STR:sol} that corresponds to point 2. The proof of Lemma \ref{reg:PROP} becomes a consequence of the estimates from Lemma \ref{lemma1}, point 1, 3 and 4 and Theorem \ref{STR:sol}. The idea of the proof of Theorem \ref{STR:sol} is based on \cite{str:dem} and on a fixed point argument from \cite{GGH} to conclude. 

\begin{Theorem}
\label{STR:sol}
Let $( l, r ) \in C^0(0,T) $, let $ \mathcal{S}(t) = \mathcal{S}^{l,r}(t) $, $ \mathcal{F}(t) = \mathcal{F}^{l,r}(t) $ and $ \mathcal{F}_T = \mathcal{F}^{l,r}_T $. Given $ f \in L^{4/3}(\mathcal{F}_T) $, $ g_1, g_2 \in L^{4/3}(0,T) $, then there exists a unique $L^{4/3}$-$L^{4/3}$ strong solution $ (\mathfrak{l},\mathfrak{r}, v, p) $ in $ [0,\tilde{T}] $ with $ \tilde{T} \leq T $ to the problem
\begin{eqnarray}
\displaystyle \frac{\partial v }{\partial t}  -  \Delta v + \nabla p = f  && \text{for} \ x\in \mathcal{F} (t), \label{equ:st1} \\
\div v   = 0 && \text{for} \ x\in \mathcal{F}(t) ,  \\
v  \cdot n =   v_\mathcal{S} \cdot n && \text{for}  \  x\in \partial \mathcal{S}  (t),    \\
(D(v) n )  \cdot \tau = - \alpha (v - v_\mathcal{S}) \cdot \tau   && \text{for} \ x\in \partial \mathcal{S}  (t),  \\
v  \cdot n =  0 && \text{for}  \  x\in \partial \Omega ,    \\
(D(v) n )  \cdot \tau = - \alpha v \cdot \tau   && \text{for} \ x\in \partial \Omega ,  \\
m \mathfrak{l}' (t) &=& -  \int_{\partial  \mathcal{S} (t)} T(v,p)  n \, ds +g_1,   \\
\mathcal{J} \mathfrak{r}' (t) &= & -  \int_{\partial  \mathcal{S} (t)} (x-  h (t) )^\perp \cdot T(v,p)  n   \, ds  +g_2,  \\
v |_{t= 0} = 0 & &  \text{for}  \  x\in  \mathcal{F}_0 ,   \\
h (0)= 0 , \ h' (0)=  0 , & &   r  (0)=  0. \label{equ:st2}
\end{eqnarray}
\end{Theorem}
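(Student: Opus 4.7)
The plan is to reduce Theorem \ref{STR:sol} to a maximal $L^{4/3}$ regularity result on a \emph{fixed} reference geometry, then treat both the geometric perturbation coming from the time-dependent domain and the coupling with the rigid body equations as small perturbations that are absorbed by a contraction argument on a short time interval, in the spirit of \cite{str:dem} and \cite{GGH}.

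First I would freeze the geometry. Using the change of variables $X = X^{l,r}$ built as in Claim \ref{cla:cov} from $(l,r) \in C^0(0,T)$, set $\bar v(t,y) = \nabla Y(t,X(t,y)) v(t,X(t,y))$ and $\bar p(t,y) = p(t,X(t,y))$. Then $\bar v$ lives on the fixed fluid domain $\mathcal{F}_0$ and $\bar v_{|\partial \mathcal{S}_0}$ is still rigid. The transported system takes the form
\begin{equation*}
\partial_t \bar v - \Delta \bar v + \nabla \bar p \;=\; \bar f + \mathcal{L}(\bar v,\bar p), \qquad \div \bar v = 0,
\end{equation*}
on $\mathcal{F}_0$, supplemented with Navier slip-with-friction conditions on $\partial \Omega$ and $\partial \mathcal{S}_0$, and the two transformed Newton equations for $(\mathfrak{l},\mathfrak{r})$. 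Here $\mathcal{L}$ is a first–second order linear differential operator with coefficients depending on $\nabla X, \nabla Y, \partial_t X$, vanishing at $t=0$ since $X(0,\cdot)=\mathrm{Id}$, and continuous in time. The boundary conditions transform with coefficients that also coincide with the Euclidean ones at $t=0$ modulo small errors.

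Second, I would invoke the maximal $L^{4/3}$--$L^{4/3}$ regularity for the linear Stokes problem on the fixed domain $\mathcal{F}_0$ with Robin (Navier slip-with-friction) boundary data, coupled with the ODE for the center of mass and angular velocity. This is the heart of the matter and rests on the $\mathcal{R}$-boundedness of the resolvent of the Robin Stokes operator established in \cite{shimada}: the Weis theorem then yields that the operator
\begin{equation*}
(v,\mathfrak{l},\mathfrak{r}) \longmapsto \bigl(\partial_t v - \Delta v + \nabla p,\; m\mathfrak{l}' + \textstyle\int_{\partial \mathcal{S}_0} T(v,p)n,\; \mathcal{J}\mathfrak{r}' + \int_{\partial \mathcal{S}_0}(x)^\perp\cdot T(v,p)n\bigr)
\end{equation*}
is an isomorphism from the natural space of solutions $\bigl\{v\in L^{4/3}(0,\tilde T;W^{2,4/3}),\ \partial_t v\in L^{4/3}(0,\tilde T;L^{4/3}),\ (\mathfrak{l},\mathfrak{r})\in W^{1,4/3}\bigr\}$ onto $L^{4/3}(\mathcal{F}_0\times(0,\tilde T))\times L^{4/3}(0,\tilde T)^2$, with operator norm controlled uniformly for small $\tilde T$ (since the data vanish at $t=0$). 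The fluid-solid coupling at the level of the Stokes system is handled exactly as in \cite{str:dem}: one decomposes $v = w + \Phi$ where $\Phi$ lifts the rigid boundary data $\mathfrak{l}+(x-h)^\perp\mathfrak{r}$, reducing matters to a Stokes system with homogeneous normal boundary trace where Shimada's result applies directly.

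Third, with this isomorphism in hand I would solve the full transported system by a Banach fixed point on $[0,\tilde T]$: the map sending $(\bar v,\bar p,\mathfrak{l},\mathfrak{r})$ to the solution of the fixed–geometry Stokes+body system with right-hand side $\bar f + \mathcal{L}(\bar v,\bar p)$ is a contraction for $\tilde T$ small enough, because the coefficients of $\mathcal{L}$ and of the boundary operators tend to zero with $t$ (and $X$ is as smooth in time as $(l,r)$ is continuous, thanks to Claim \ref{cla:cov}). Transporting the fixed point back through $X$ gives the claimed strong solution; uniqueness follows from the uniqueness half of the isomorphism, or equivalently from point~1 in the enumerated list preceding the theorem. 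Finally I would go back to the moving domain via $Y$ and observe that all the regularities are preserved since $X, Y \in W^{1,\infty}(0,T;C^k(\Omega))$.

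The main obstacle I anticipate is checking carefully that the smallness of the perturbation $\mathcal{L}$ and of the boundary-coefficient defects is compatible with the maximal $L^{4/3}$ norm: unlike in Hilbert settings, one does not have trivial energy estimates, so the gain must come entirely from the factor $\tilde T$ (through $\|\nabla X - \mathrm{Id}\|_{L^\infty}$ and $\|\partial_t X\|_{L^\infty}$ being small on $[0,\tilde T]$) and from the continuity of $(l,r)$. A secondary difficulty is to lift the solid boundary data $\mathfrak{l}+(x-h)^\perp \mathfrak{r}$ into the fluid in a way that is compatible with the Robin boundary operator on $\partial \Omega$ and with the $L^{4/3}$--$W^{2,4/3}$ scale; this is the place where the fixed point structure of \cite{GGH} is indispensable, since it decouples the lifting from the maximal regularity estimate.
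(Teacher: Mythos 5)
Your proposal follows essentially the same route as the paper: maximal $L^{4/3}$--$L^{4/3}$ regularity for the coupled fluid--body system on the fixed reference domain obtained from Shimada's $\mathcal{R}$-boundedness of the Robin Stokes operator via the Maity--Tucsnak perturbation viewpoint (lifting the rigid boundary data and treating the body coupling as a finite-rank, relatively bounded perturbation), then the geometric change of variables of Claim \ref{cla:cov}, and finally a short-time Banach fixed point in the spirit of \cite{GGH} to absorb the coefficients coming from the moving domain. The only difference is presentational (you transform the domain before stating the fixed-domain maximal regularity, whereas the paper does the reverse), and your anticipated difficulties are exactly the points the paper addresses with the operators $S(\mathfrak{l},\mathfrak{r})$, $\mathcal{A}_{FS}$ and the added-mass matrix $\mathbb{K}$.
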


The proof of the theorem is divided into three steps. In the first one we study the problem where the domain does not depend on time. In the second step we move the problem to a fixed domain one. In the last one we use a fixed point argument to conclude.

\subsection{Time-independent domain}  

Consider the  independent-in-time domain problem associated with (\ref{equ:st1})-(\ref{equ:st2}):

\begin{eqnarray}
\displaystyle \frac{\partial v }{\partial t}  -  \Delta v + \nabla p = f  && \text{for} \ x\in \mathcal{F}_0 , \label{lin1}\\
\div v   = 0 && \text{for} \ x\in \mathcal{F}_0 ,  \\
v  \cdot n =   v_\mathcal{S} \cdot n && \text{for}  \  x\in \partial \mathcal{S}_0,    \\
(D(v) n )  \cdot \tau = - \alpha (v - v_\mathcal{S}) \cdot \tau   && \text{for} \ x\in \partial \mathcal{S}_0,  \\
v  \cdot n =   0 && \text{for}  \  x\in \partial \Omega ,    \\
(D(v) n )  \cdot \tau = - \alpha v  \cdot \tau   && \text{for} \ x\in \partial \Omega ,  \\
m \mathfrak{l}' (t) &=& -  \int_{\partial  \mathcal{S}_0} T(v,p)  n \, ds +g_1,   \\
\mathcal{J} \mathfrak{r}' (t) &= & -  \int_{\partial  \mathcal{S}_0} (x-  h (t) )^\perp \cdot T(v,p)  n   \, ds  +g_2,  \\
v |_{t= 0} = 0 & &  \text{for}  \  x\in  \mathcal{F}_0 ,   \\
h (0)= 0 , \ h' (0)=  0 , & &   r  (0)=  0. \label{lin2} 
\end{eqnarray}

To prove existence of strong solutions we use an idea introduced by Maity and Tucsnak in \cite{str:dem} where they view the ``fluid+body system'' as a perturbation of the system of a fluid alone. We start by recalling the result on $ L^p-L^q $ regularity from Shimada in \cite{shimada}. To do so we need some notations.

Let $ \mathcal{P} $ the projection
\begin{equation*}
\mathcal{P}_{q,\mathcal{F}_0}: L^q(\mathcal{F}_0; \mathbb{R}^2) \to L^q_{\sigma}(\mathcal{F}_0).
\end{equation*}  
Then we can define the operator $ A_q: \mathcal{D}(A_q) \to L^q_{\sigma}(\mathcal{F}_0) $ such that for any $ u \in \mathcal{D}(A_q)$,
\begin{equation*}
A_q u  = \mathcal{P}_{q,\mathcal{F}_0} \Delta u,
\end{equation*}
and
\begin{equation*}
\mathcal{D}(A_q) = L^q_{\sigma}(\mathcal{F}_0)\cap\left\{ u \in W^{2,q}(\mathcal{F}_0) \text{ s.t. } D(u) \cdot \tau = -\alpha u\cdot \tau    \text{ on } \partial \mathcal{F}_0  \right\}.
\end{equation*}

\begin{Theorem}[Theorem 1.3 of \cite{shimada}] The operator $ A_q $ defined above is $ \mathcal{R}$-sectorial.
\end{Theorem}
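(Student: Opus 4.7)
The plan is to prove $\mathcal{R}$-sectoriality of $A_q$ by the standard localization-plus-model-problem strategy for Stokes-type operators. Recall that $\mathcal{R}$-sectoriality asserts the existence of an angle $\theta\in(\pi/2,\pi)$ and some $\lambda_0\ge 0$ such that the truncated sector $\Sigma_{\theta,\lambda_0}=\{\lambda\in\mathbb{C}:|\arg\lambda|<\theta,\ |\lambda|>\lambda_0\}$ lies in the resolvent set of $A_q$ and the family $\{\lambda(\lambda-A_q)^{-1}:\lambda\in\Sigma_{\theta,\lambda_0}\}$ is $\mathcal{R}$-bounded in $\mathcal{L}(L^q_\sigma(\mathcal{F}_0))$. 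The key geometric idea is to cover $\overline{\mathcal{F}_0}$ by finitely many small balls, choose a subordinated partition of unity $\{\chi_j\}$, and reduce, after a smooth local flattening of $\partial\mathcal{F}_0$, to two canonical model problems: (i) the Stokes resolvent on the whole plane $\mathbb{R}^2$, and (ii) the Stokes resolvent on the half-space $\mathbb{R}^2_+$ with the Navier slip-with-friction condition.

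For the whole-plane model the Helmholtz projection removes the pressure and the resolvent reduces to $(\lambda-\Delta)^{-1}$ acting on $L^q_\sigma(\mathbb{R}^2)$. Its symbol $(\lambda+|\xi|^2)^{-1}$ satisfies Mikhlin-type estimates with constants that are $\mathcal{R}$-bounded uniformly in $\lambda\in\Sigma_{\theta,\lambda_0}$, so Weis's operator-valued multiplier theorem directly delivers $\mathcal{R}$-sectoriality in this case.

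The main obstacle is the half-space model with the mixed Navier condition $u_2=0$ and $\partial_2 u_1+\partial_1 u_2+2\alpha u_1=0$ on $\{x_2=0\}$. Here I would apply a partial Fourier transform in the tangential variable $x_1$ to reduce, at each frequency $\xi_1$ and each $\lambda\in\Sigma_{\theta,\lambda_0}$, to an ODE system in $x_2\in(0,\infty)$ whose decaying solutions are combinations of a viscous mode proportional to $e^{-\sqrt{\lambda+\xi_1^2}\,x_2}$ and a pressure mode proportional to $e^{-|\xi_1|x_2}$. Imposing the boundary conditions yields explicit formulae for $(\widehat{u},\widehat{p})$ as functions of $\widehat{f}$, $\xi_1$, and $\lambda$; the friction coefficient $\alpha$ enters through denominators of the form $\sqrt{\lambda+\xi_1^2}+\alpha$ and similar expressions. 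The technically demanding step is then to verify two things simultaneously: that these denominators stay bounded away from zero on $\Sigma_{\theta,\lambda_0}\times(\mathbb{R}\setminus\{0\})$, and that each of the resulting symbols defines an operator-valued Fourier multiplier in $\xi_1$ whose $\xi_1$-derivatives form an $\mathcal{R}$-bounded family uniformly in $\lambda$. Following the $H^\infty$-calculus framework of Denk--Hieber--Pr\"uss and the Shibata--Shimizu approach to Robin-type Stokes problems, both checks can be carried out; Weis's theorem then yields $\mathcal{R}$-sectoriality of the half-space model.

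Finally, the local $\mathcal{R}$-sectorialities must be patched back together. Applying $\chi_j$ to the resolvent equation and freezing the coefficients at the centre of the $j$-th ball produces error terms of two types: commutators $[\chi_j,\Delta]$ and $[\chi_j,\nabla]$ involving at most one derivative of $u$ and no derivative of $p$, and perturbative differences between frozen and true coefficients, each multiplied by a factor that is small when the patches have small diameter. Both families of error terms have $\mathcal{R}$-norms that are $o(1)$ as $\lambda_0\to\infty$ within $\Sigma_{\theta,\lambda_0}$. Since $\mathcal{R}$-sectoriality is stable under such relatively small (lower-order) perturbations, choosing $\lambda_0$ sufficiently large absorbs the error terms and concludes the proof that $A_q$ is $\mathcal{R}$-sectorial on $L^q_\sigma(\mathcal{F}_0)$.
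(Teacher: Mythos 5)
The paper does not prove this statement: it is imported verbatim as Theorem 1.3 of \cite{shimada}, so there is no internal proof to compare yours against. Your outline follows the standard strategy (and essentially the strategy of the cited reference): reduction to the whole-space and half-space model problems, partial Fourier analysis of the Robin half-space resolvent, Weis's multiplier theorem, and localization. The architecture is therefore the right one.

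As a proof, however, it has two genuine gaps. First, the entire half-space analysis, which is the heart of the matter, is asserted rather than carried out: you state that the Lopatinskii-type denominators stay bounded away from zero on the sector and that the resulting symbols form $\mathcal{R}$-bounded multiplier families, but you verify neither claim. For the Robin condition the behaviour of denominators such as $\sqrt{\lambda+\xi_1^2}+\alpha$ as $\xi_1\to 0$, as $|\lambda|\to\infty$ along the rays $\arg\lambda=\pm\theta$, and uniformly in the sign and size of $\alpha\ge 0$, is exactly what must be checked; this is where the proof lives, and ``both checks can be carried out'' is not a proof of them. Second, the localization step is treated as if $A_q$ were a scalar elliptic operator, whereas for the Stokes system two additional difficulties arise that you do not address: (i) multiplying $u$ by a cut-off $\chi_j$ destroys the divergence-free constraint, so $\chi_j u$ must be corrected by a Bogovskii-type operator (or one must pass to the reduced Stokes operator in which the pressure is eliminated through a weak Neumann/Robin problem before localizing); and (ii) the commutator term $(\nabla\chi_j)\,p$ is not an obviously lower-order perturbation, since $p$ is a nonlocal functional of $u$ and is not small relative to $\lambda(\lambda-A_q)^{-1}f$ without a separate resolvent estimate for the pressure. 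Both points are standard in the literature but substantive; without them the final absorption argument does not close.
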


We are going to reformulate the system (\ref{lin1})-(\ref{lin2}) in the form
\begin{equation*}
z'(t) = Az(t) + f(t), \quad z(0) = 0.
\end{equation*} 

We define $ (S(\mathfrak{l},\mathfrak{r}),S_{pr}(\mathfrak{l},\mathfrak{r})) $ to be the solution $ (v,p) $ of
\begin{eqnarray*}
-  \Delta v + \nabla p = 0  && \text{for} \ x\in \mathcal{F}_0 , \\
\div v   = 0 && \text{for} \ x\in \mathcal{F}_0 ,  \\
v  \cdot n =   (\mathfrak{l}+\mathfrak{r}x^{\perp}) \cdot n && \text{for}  \  x\in \partial \mathcal{S}_0,    \\
(D(v) n )  \cdot \tau = - \alpha (v - (\mathfrak{l}+\mathfrak{r}x^{\perp})) \cdot \tau   && \text{for} \ x\in \partial \mathcal{S}_0,  \\
  v  \cdot n =   0 && \text{for}  \  x\in \partial \Omega, \\
(D(v) n )  \cdot \tau = - \alpha v \cdot \tau   && \text{for} \ x\in \partial \Omega.
\end{eqnarray*}

\begin{Proposition}
The following estimates hold
\begin{equation*}
S(\mathfrak{l},\mathfrak{r}) \in \mathcal{L}(\mathbb{R}^3, W^{2,q}(\mathcal{F}_0)), \quad S_{pr}(\mathfrak{l},\mathfrak{r}) \in \mathcal{L}(\mathbb{R}^3, W^{1,q}_m(\mathcal{F}_0)),
\end{equation*}
where $ W^{1,q}_m(\mathcal{F}_0) = W^{1,q}(\mathcal{F}_0) \cap \left\{ f \in L^{q}(\mathcal{F}_0) \text{ such that } \int f = 0 \right\}$.
\end{Proposition}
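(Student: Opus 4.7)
The plan is to use the finite-dimensionality of $\mathbb{R}^3$: once I prove existence and uniqueness of $(v,p)$ in $W^{2,q}(\mathcal{F}_0)\times W^{1,q}_m(\mathcal{F}_0)$ for each fixed $(\mathfrak{l},\mathfrak{r})$, the resulting linear operators $S$ and $S_{pr}$ are automatically continuous, since any linear map out of $\mathbb{R}^3$ is bounded. So it suffices to establish well-posedness of the stationary Stokes system with inhomogeneous Navier-slip boundary data.

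First I would lift the inhomogeneous normal trace. Since $\mathfrak{l}+\mathfrak{r}x^{\perp}$ is a rigid velocity field in $\mathbb{R}^2$, it equals $\nabla^{\perp}\psi_0$ with $\psi_0(x)=\mathfrak{l}^{\perp}\cdot x+\tfrac12\mathfrak{r}|x|^2$. Picking a cutoff $\chi\in C^{\infty}_c(\Omega)$ identically $1$ in a neighbourhood of $\overline{\mathcal{S}_0}$ and setting $V=\nabla^{\perp}(\chi\psi_0)$, one obtains a smooth divergence-free lifting on $\mathcal{F}_0$ satisfying $V\cdot n=(\mathfrak{l}+\mathfrak{r}x^{\perp})\cdot n$ on $\partial\mathcal{S}_0$, $V\cdot n=0$ on $\partial\Omega$, and $\|V\|_{W^{k,q}(\mathcal{F}_0)}\le C_k(|\mathfrak{l}|+|\mathfrak{r}|)$ for every $k\in\mathbb{N}$ and every $q\in(1,\infty)$.

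Next, write $v=V+w$ and let $\widetilde{V}$ be a second smooth solenoidal lifting supported in a collar of $\partial\mathcal{F}_0$ and tailored so that $(D(\widetilde{V})n)\cdot\tau+\alpha\widetilde{V}\cdot\tau$ absorbs the inhomogeneous tangential Robin data produced by $V$ and by the rigid velocity on $\partial\mathcal{S}_0$. Subtracting reduces the problem to the homogeneous stationary Stokes system
\begin{align*}
-\Delta u+\nabla p=F\in L^q(\mathcal{F}_0),\quad \div u=0 &\quad \text{in } \mathcal{F}_0,\\
u\cdot n=0,\quad (D(u)n)\cdot\tau+\alpha u\cdot\tau=0 &\quad \text{on } \partial\mathcal{F}_0,
\end{align*}
with $u=w-\widetilde{V}$ and $F$ depending linearly on $(\mathfrak{l},\mathfrak{r})$ and bounded in $L^q(\mathcal{F}_0)$. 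Shimada's theorem recalled above states that the associated Stokes operator $A_q$ is $\mathcal{R}$-sectorial with domain contained in $W^{2,q}(\mathcal{F}_0)$; a standard energy argument (test by $u$, apply Korn's inequality and use the positive boundary term $\alpha\int_{\partial\mathcal{F}_0}|u|^2$) shows that $\ker A_q=\{0\}$ when $\alpha>0$, so that $0$ belongs to the resolvent set of $A_q$. Consequently $A_q^{-1}:L^q_{\sigma}(\mathcal{F}_0)\to \mathcal{D}(A_q)\subset W^{2,q}(\mathcal{F}_0)$ is bounded, and de Rham's lemma recovers the pressure $p\in W^{1,q}(\mathcal{F}_0)$ uniquely up to a constant, which we normalize by $\int_{\mathcal{F}_0}p=0$ to place it in $W^{1,q}_m(\mathcal{F}_0)$.

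Reassembling the pieces yields well-defined, linear maps $S$ and $S_{pr}$ into the claimed spaces, and the boundedness follows automatically from the finite dimension of $\mathbb{R}^3$. The main technical obstacle is the construction of the second lifting $\widetilde{V}$, because the Navier-slip boundary condition is of Robin type and couples $D(v)n$ with $v$, so a plain trace-extension is not enough; one handles this either by a direct construction in a tubular neighbourhood of $\partial\mathcal{F}_0$ using boundary normal coordinates, or by invoking the inhomogeneous-boundary-data version of Shimada's theory (as in \cite{str:dem}), which collapses the two liftings into one step at the cost of a heavier functional-analytic setup.
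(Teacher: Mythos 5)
Your proposal is correct in substance and ends up at the same place as the paper, but the reduction is organized differently. The paper lifts the inhomogeneous normal data with the Kirchhoff potentials $\nabla\phi_i$ (harmonic, hence solenoidal, with normal traces $e_i\cdot n$ and $x^{\perp}\cdot n$), keeps the resulting inhomogeneous tangential Robin data $h_1\cdot\tau$, $h_2\cdot\tau$ in the reduced system \eqref{SY:SS}, and solves that system directly via the inhomogeneous-boundary-data solvability of \cite{shimada}, concluding by the Fredholm alternative together with an energy-based uniqueness argument. You instead lift with a cut-off stream function $\nabla^{\perp}(\chi\psi_0)$ (minor sign point: with $x^{\perp}=(-x_2,x_1)$ one gets $\nabla^{\perp}\bigl(\mathfrak{l}^{\perp}\cdot x+\tfrac12\mathfrak{r}|x|^2\bigr)=-\mathfrak{l}+\mathfrak{r}x^{\perp}$, so $\psi_0$ should carry $-\mathfrak{l}^{\perp}\cdot x$), and then propose a second lifting $\widetilde V$ to homogenize the tangential Robin condition before applying the homogeneous theory for $A_q$. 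That second lifting is exactly the step you flag as delicate, and your stated fallback --- invoking the inhomogeneous-data version of Shimada's theory as in \cite{str:dem} --- is precisely the paper's route, so nothing essential is missing; your version trades the Fredholm argument for an explicit construction, while the paper avoids the construction at the cost of quoting a stronger solvability statement. Your remark that continuity of $S$ and $S_{pr}$ is automatic because any linear map on $\mathbb{R}^3$ is bounded is a clean way to finish (the paper only notes linearity). One shared caveat: both your kernel argument and the paper's ``standard energy estimates'' genuinely need $\alpha>0$ (or a geometric hypothesis); for $\alpha=0$ the energy identity only yields $D(u)=0$, and a rigid field tangent to both boundaries (e.g.\ a rotation for concentric disks) would survive.
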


\begin{proof} We recall that the Kirchoff potentials $ \phi_i $ with $ i = 1,2,3 $ are the solutions of the problems
\begin{equation*}
\begin{cases}
-\Delta \phi_i = 0 & \quad \text{ on } \mathcal{F}_0, \\
\nabla \phi_i\cdot n = K_i & \quad \text{ in } \partial\mathcal{S}_0, \\
\nabla \phi_i \cdot n = 0 & \quad \text{ in } \partial \Omega,
\end{cases}
\end{equation*}
where $ \phi_i : \mathcal{F}_0 \to \mathbb{R} $ and $ K_i = e_i \cdot n $ for $ i =1,2 $ and $ K_3 = x^{\perp}\cdot n $. Consider $ \tilde{v} = v - \mathfrak{l}_1\nabla \phi_1 -\mathfrak{l}_2\nabla \phi_2-\mathfrak{r}\nabla \phi_3 $. The couple $ (\tilde{v},p) $ satisfies the system
\begin{eqnarray}
-  \Delta \tilde{v} + \nabla p = 0  && \text{for} \ x\in \mathcal{F}_0 , \nonumber  \\
\div \tilde{v}   = 0 && \text{for} \ x\in \mathcal{F}_0 ,  \nonumber \\
\tilde{v}  \cdot n =   0 && \text{for}  \  x\in \partial \mathcal{S}_0,   \label{SY:SS} \\
(D(\tilde{v}) n )  \cdot \tau = - \alpha \tilde{v} \cdot \tau +h_1 \cdot \tau   && \text{for} \ x\in \partial \mathcal{S}_0,  \nonumber  \\
  \tilde{v}  \cdot n =   0 && \text{for}  \  x\in \partial \Omega, \nonumber  \\
(D(\tilde{v}) n )  \cdot \tau = - \alpha \tilde{v} \cdot \tau +h_2\cdot \tau   && \text{for} \ x\in \partial \Omega. \nonumber 
\end{eqnarray} 
with $ h_1 = \mathfrak{l}_1(D\nabla\phi_1\cdot n+ e_1- \nabla \phi_1)+\mathfrak{l}_2(D\nabla\phi_2\cdot n+ e_2- \nabla \phi_2)+\mathfrak{r}(D\nabla\phi_3\cdot n+ x^{\perp}\cdot n - \nabla \phi_3)) $ and $ h_1 = (\mathfrak{l}_1(D\nabla\phi_1\cdot n- \nabla \phi_1)+\mathfrak{l}_2(D\nabla\phi_2\cdot n- \nabla \phi_2)+\mathfrak{r}(D\nabla\phi_3\cdot n - \nabla \phi_3)) $. 

Recall that Shimada in \cite{shimada} prove $\mathcal{R}$-sectoriality for the operator associated with the system (\ref{SY:SS}) where $( h_1\cdot \tau)\cdot \tau $ and $( h_2\cdot \tau)\cdot \tau $ are the trace of a function $ h \in W^{1,q}(\mathcal{F}_0)$. To conclude the proof is enough, by Fredholm alternative, to prove uniqueness for system (\ref{SY:SS}). Uniqueness is clear by standard energy estimates.  

The linearity of $ S $ and $ S_{pr} $ is a direct consequence of the linearity of the system that they solve.

%
\end{proof}

We define the operator $ \mathcal{A}_{FS} : \mathcal{D}(\mathcal{A}_{FS}) \to \mathcal{X}  $ with 
\begin{equation*}
\mathcal{X} = L^q_{\sigma} ( \mathcal{F}_0) \times \mathbb{R}^2 \times \mathbb{R}, \quad
\mathcal{D}(\mathcal{A}_{FS}) = \left\{ (\mathcal{P}u, \mathfrak{l} , \mathfrak{r}) \in X \,  | \, \mathcal{P}u-\mathcal{P}S_{\mathcal{S}}(\mathfrak{l}, \mathfrak{r}) \in \mathcal{D}(A_q) \right\},
\end{equation*}
and
\begin{equation*}
\mathcal{A}_{FS} = \begin{pmatrix} A_q & - A_q \mathcal{P}S \\  \mathbb{K}^{-1}\mathcal{C}_1 & \mathbb{K}^{-1}\mathcal{C}_2    \end{pmatrix},
\end{equation*}
where $ \mathbb{K} $ is the mass plus the added mass matrix,
\begin{equation*}
\mathcal{C}_1(\mathcal{P}u) = \begin{pmatrix}
\displaystyle{-2 \int_{\partial \mathcal{S}_0} D(\mathcal{P}u) n d\gamma + \int_{\partial \mathcal{S}_0} N(\Delta\mathcal{P}u \cdot n)n d\gamma} \\ 
\displaystyle{-2 \int_{\partial \mathcal{S}_0} y^{\perp}\cdot D(\mathcal{P}u) n d\gamma + \int_{\partial \mathcal{S}_0} y^{\perp}\cdot N(\Delta\mathcal{P} u \cdot n)n d\gamma} 
\end{pmatrix},
\end{equation*}
\begin{equation*}
\mathcal{C}_2\begin{pmatrix} \mathfrak{l} \\ \mathfrak{r} \end{pmatrix} = \begin{pmatrix} 
\displaystyle{-2 \int_{\partial \mathcal{S}_0} D((\Id -\mathcal{P})S(\mathfrak{l}, \mathfrak{r}))n d\gamma} \\
\displaystyle{-2 \int_{\partial \mathcal{S}_0} y^{\perp}\cdot D((\Id -\mathcal{P})S(\mathfrak{l}, \mathfrak{r}))n d\gamma},
\end{pmatrix} 
\end{equation*}
where $ Nh = \phi $ is defined by $ \Delta \phi = 0 $ in $ \mathcal{F}_0$, $ \frac{\partial \phi}{\partial n} = h $ on $ \partial \mathcal{F}_0$.

Moreover we define $ N_S(h) = N(\mathbf{1}_{\mathcal{S}_0}h )$ and $ (\tilde{g}_1,\tilde{g}_2)^{T} = (\mathbb{K}^{-1}(g_1,g_2)^T)^T $.

\begin{Theorem}
$ (u,p,\mathfrak{l},\mathfrak{r}) $ is a smooth solution to the system (\ref{lin1})-(\ref{lin2}) if and only if it satisfies 
\begin{gather*}
\partial_t \begin{pmatrix} \mathcal{P} u   \\  \mathfrak{l}  \\  \mathfrak{r} \end{pmatrix} = \mathcal{A}_{FS}\begin{pmatrix} \mathcal{P} u   \\  \mathfrak{l}  \\  \mathfrak{r} \end{pmatrix} +  \begin{pmatrix} \mathcal{P}f \\ \tilde{g}_1 \\ \tilde{g}_2 \end{pmatrix}, \quad \begin{pmatrix} \mathcal{P}u(0) \\ \mathfrak{l}(0) \\\mathfrak{r}(0) \end{pmatrix} = \begin{pmatrix}
\mathcal{P} u_0 \\ \mathfrak{l}_0 \\ \mathfrak{r}_0  
\end{pmatrix}, \\
(I-\mathcal{P})u = (I-\mathcal{P})S(\mathfrak{l},\mathfrak{r}),\\
p = N( \Delta \mathcal{P} u \cdot n)-\lambda N_S((\mathfrak{l}+x^{\perp}\mathfrak{r})\cdot n).
\end{gather*}
\end{Theorem}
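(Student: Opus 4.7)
The plan is to verify the equivalence by decomposing the fluid velocity through the Helmholtz projection $u = \mathcal{P}u + (I-\mathcal{P})u$ and identifying the non-solenoidal part with the steady lift $S(\mathfrak{l},\mathfrak{r})$; projecting the momentum equation onto $L^q_\sigma(\mathcal{F}_0)$ then produces the abstract Cauchy problem driven by $\mathcal{A}_{FS}$, while the pressure is recovered by solving a Neumann problem. The first step is to observe that $u - S(\mathfrak{l},\mathfrak{r})$ is divergence-free with vanishing normal trace on $\partial\mathcal{F}_0$: by the very definition of $S$ one has $S(\mathfrak{l},\mathfrak{r})\cdot n = (\mathfrak{l}+\mathfrak{r}x^\perp)\cdot n$ on $\partial\mathcal{S}_0$ and $S(\mathfrak{l},\mathfrak{r})\cdot n = 0$ on $\partial\Omega$, so the difference lies in $L^q_\sigma(\mathcal{F}_0)$, which yields $(I-\mathcal{P})u = (I-\mathcal{P})S(\mathfrak{l},\mathfrak{r})$. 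Moreover this difference satisfies the homogeneous Robin condition on all of $\partial\mathcal{F}_0$, since subtracting the rigid-body velocity cancels the inhomogeneous term on $\partial\mathcal{S}_0$; hence $u - S(\mathfrak{l},\mathfrak{r}) \in \mathcal{D}(A_q)$.

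Second, I would apply $\mathcal{P}$ to $\partial_t u - \Delta u + \nabla p = f$ to obtain $\partial_t \mathcal{P}u = \mathcal{P}\Delta u + \mathcal{P}f$. The steady Stokes system satisfied by $S$ gives $\Delta S(\mathfrak{l},\mathfrak{r}) = \nabla S_{pr}(\mathfrak{l},\mathfrak{r})$, so $\mathcal{P}\Delta S(\mathfrak{l},\mathfrak{r}) = 0$ and therefore $\mathcal{P}\Delta u = A_q\bigl(\mathcal{P}u - \mathcal{P}S(\mathfrak{l},\mathfrak{r})\bigr)$, which matches the first row of $\mathcal{A}_{FS}$. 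For the body equations I would split the boundary integrals in Newton's laws via $u = (\mathcal{P}u - \mathcal{P}S(\mathfrak{l},\mathfrak{r})) + S(\mathfrak{l},\mathfrak{r})$ and $T(v,p)n = 2D(v)n - pn$: the first piece produces the $\mathcal{P}u$-dependent part of $\mathcal{C}_1$ and the $S$-piece (through its non-solenoidal part) yields $\mathcal{C}_2(\mathfrak{l},\mathfrak{r})$, while the inertial contribution of $\mathcal{P}S(\mathfrak{l},\mathfrak{r})$ --- linear in $(\mathfrak{l}',\mathfrak{r}')$ through the Kirchhoff potentials $\phi_i$ --- is moved to the left-hand side and combines with $m\mathfrak{l}'$ and $\mathcal{J}\mathfrak{r}'$ into the added-mass matrix $\mathbb{K}(\mathfrak{l}',\mathfrak{r}')^T$. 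Inverting $\mathbb{K}$ then produces the second row of $\mathcal{A}_{FS}$.

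Third, to recover the pressure I would take the divergence of the momentum equation to obtain $\Delta p = \div f$ in $\mathcal{F}_0$ and read its normal trace as Neumann data on $\partial\mathcal{F}_0$. Since $(I-\mathcal{P})\Delta u$ is a gradient, one has $\Delta u \cdot n|_{\partial\mathcal{F}_0} = \Delta \mathcal{P}u\cdot n$, and the boundary condition on $\partial\mathcal{S}_0$ contributes a term supported by $(\mathfrak{l}+x^\perp \mathfrak{r})\cdot n$ once one uses the body ODE to eliminate $\partial_t u\cdot n$ in favour of static quantities; solving this Neumann problem with the zero-mean normalisation built into $N$ yields $p = N(\Delta \mathcal{P}u\cdot n) - \lambda N_S((\mathfrak{l}+x^\perp\mathfrak{r})\cdot n)$. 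The converse implication is then a direct verification: setting $u := \mathcal{P}u + (I-\mathcal{P})S(\mathfrak{l},\mathfrak{r})$ and $p$ by the formula above, one unwinds the identities of the three steps to recover every line of (\ref{lin1})--(\ref{lin2}).

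The main obstacle I anticipate is in Step 2, namely matching the boundary integrals $-2\int_{\partial\mathcal{S}_0} D(u)n\, ds$ and $\int_{\partial\mathcal{S}_0} pn\, ds$ exactly with $\mathcal{C}_1\mathcal{P}u$ and $\mathcal{C}_2(\mathfrak{l},\mathfrak{r})$, and absorbing the inertial contribution of $\mathcal{P}S(\mathfrak{l},\mathfrak{r})$ into $\mathbb{K}$. This requires careful Green-type identities on $\mathcal{F}_0$ together with the defining boundary-value problems for the Kirchhoff potentials $\phi_i$, so that the added-mass corrections appear in precisely the form encoded by $\mathbb{K}$, $\mathcal{C}_1$ and $\mathcal{C}_2$; the pressure-splitting of Step 3 and the identity $\mathcal{P}\Delta S(\mathfrak{l},\mathfrak{r}) = 0$ are then the glue that makes the abstract system close on $(\mathcal{P}u,\mathfrak{l},\mathfrak{r})$.
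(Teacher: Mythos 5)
Your route --- identifying $(I-\mathcal{P})u$ with $(I-\mathcal{P})S(\mathfrak{l},\mathfrak{r})$ via the normal trace, projecting the momentum equation to get the first row of $\mathcal{A}_{FS}$, absorbing the inertia of the gradient part into the added-mass matrix $\mathbb{K}$, and recovering $p$ from a Neumann problem --- is precisely the argument of \cite[Section 3.1]{str:dem} that the paper's proof consists of citing, together with the observation that only the normal condition $u\cdot n=(\mathfrak{l}+\mathfrak{r}x^{\perp})\cdot n$ is used so the Navier condition changes nothing. Your proposal is a correct expansion of that same approach (modulo the minor slip that the added-mass contribution comes from the time derivative of the gradient part $(I-\mathcal{P})S$ feeding into the pressure, not from $\mathcal{P}S$).
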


\begin{proof}
The proof is contained in \cite[Section 3.1]{str:dem}, in fact the only boundary condition that they use is
$ u\cdot n = (\mathfrak{l}+\mathfrak{r} x^{\perp})\cdot n $, and the second one is not relevant.
\end{proof}

\subsection{R-boundedness}
To prove $ L^p-L^q $ regularity we prove $ \mathcal{R}$-boundedness of the resolvent operator $ \mathcal{A}_{FS}$.  
\begin{Theorem}
Let $ 1 < q < \infty  $. Then $\mathcal{A}_{FS} = \mathcal{A}_{FS}^q $ is a $ \mathcal{R}$-bounded operator.
\end{Theorem}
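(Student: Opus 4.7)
The strategy is to view $\mathcal{A}_{FS}$ as a perturbation of the block-diagonal operator $\tilde{A} = \operatorname{diag}(A_q, \mathbb{K}^{-1}\mathcal{C}_2)$ on $\mathcal{X} = L^q_\sigma(\mathcal{F}_0) \times \mathbb{R}^3$, and to exploit the finite-dimensional nature of the second factor together with the $\mathcal{R}$-sectoriality of $A_q$ provided by Shimada's theorem. This is the same route as in Section 3 of \cite{str:dem}, transposed to the Navier slip setting (where $A_q$ corresponds now to Robin rather than no-slip boundary conditions).

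First, I would check that $\tilde A$ is itself $\mathcal{R}$-sectorial on $\mathcal{X}$. The top-left block $A_q$ is $\mathcal{R}$-sectorial by the cited theorem of Shimada. The bottom-right block $\mathbb{K}^{-1}\mathcal{C}_2$ is a fixed $3\times 3$ matrix, and any bounded family of operators on a finite-dimensional Banach space is automatically $\mathcal{R}$-bounded; after a harmless sectorial shift, its resolvent family is therefore $\mathcal{R}$-bounded on any sector $\Sigma_\theta$. Since $\mathcal{R}$-sectoriality passes to direct sums (the resolvent of $\tilde A$ being block-diagonal), $\tilde A$ is $\mathcal{R}$-sectorial.

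Next I would analyze the resolvent equation $(\lambda I - \mathcal{A}_{FS})(\mathcal{P}u,\mathfrak{l},\mathfrak{r})^T = (f,g_1,g_2)^T$ by Schur-complement substitution. The first row gives
\begin{equation*}
\mathcal{P}u \;=\; (\lambda - A_q)^{-1}f \;+\; \lambda (\lambda - A_q)^{-1}\mathcal{P}S(\mathfrak{l},\mathfrak{r}) \;-\; \mathcal{P}S(\mathfrak{l},\mathfrak{r}),
\end{equation*}
using the identity $(\lambda - A_q)^{-1}A_q = \lambda(\lambda - A_q)^{-1} - I$. Substituting into the second row produces a finite-dimensional equation $M(\lambda)(\mathfrak{l},\mathfrak{r})^T = \tilde F(\lambda)$ for some $3\times 3$ matrix
\begin{equation*}
M(\lambda) \;=\; \lambda I \;-\; \mathbb{K}^{-1}\mathcal{C}_2 \;+\; \mathbb{K}^{-1}\mathcal{C}_1 \mathcal{P}S \;-\; \mathbb{K}^{-1}\mathcal{C}_1 \,\lambda(\lambda - A_q)^{-1}\mathcal{P}S,
\end{equation*}
whose only non-elementary term is the last one. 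Provided that this last term remains uniformly bounded as $|\lambda|\to\infty$ in $\Sigma_\theta$, the matrix $M(\lambda)$ is invertible on a sector with $\|M(\lambda)^{-1}\|$ of order $|\lambda|^{-1}$; one can then express $(\lambda - \mathcal{A}_{FS})^{-1}$ as a finite combination of $(\lambda - A_q)^{-1}$, $\lambda(\lambda - A_q)^{-1}$, $\mathcal{P}S$, $\mathcal{C}_1(\lambda - A_q)^{-1}$ and bounded finite-rank matrices. Using the standard permanence properties of $\mathcal{R}$-boundedness (stability under sums, products with bounded operators, and composition with finite-rank factors) together with the $\mathcal{R}$-boundedness of $\{\lambda(\lambda - A_q)^{-1}\}$ and $\{A_q(\lambda - A_q)^{-1}\}$ inherited from Shimada's theorem, this yields $\mathcal{R}$-sectoriality of $\mathcal{A}_{FS}$.

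The main obstacle is the uniform control of the term $\mathcal{C}_1\,\lambda(\lambda - A_q)^{-1}\mathcal{P}S$. Recall that $\mathcal{C}_1$ involves the boundary integrals $\int_{\partial \mathcal{S}_0} D(\mathcal{P}u)\,n$ and $\int_{\partial \mathcal{S}_0} N(\Delta \mathcal{P}u \cdot n)\,n$, which a priori require $\mathcal{P}u \in W^{2,q}$ and are \emph{not} continuous on $L^q_\sigma$. The point is that $\mathcal{P}S$ already maps $\mathbb{R}^3$ into $\mathcal{D}(A_q)\subset W^{2,q}(\mathcal{F}_0)$ with $\|\mathcal{P}S(\mathfrak{l},\mathfrak{r})\|_{W^{2,q}} \lesssim |(\mathfrak{l},\mathfrak{r})|$, so $A_q\mathcal{P}S$ is a \emph{bounded} operator $\mathbb{R}^3 \to L^q_\sigma$. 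Rewriting $\lambda(\lambda - A_q)^{-1}\mathcal{P}S = \mathcal{P}S + (\lambda - A_q)^{-1} A_q\mathcal{P}S$ and using that $\{(\lambda - A_q)^{-1}A_q\mathcal{P}S\}$ is uniformly bounded (by $\mathcal{R}$-sectoriality of $A_q$ and finite-rank of $\mathcal{P}S$), one verifies that $\mathcal{C}_1\,\lambda(\lambda - A_q)^{-1}\mathcal{P}S$ remains in a bounded set of $3\times 3$ matrices. This estimate, combined with the elementary invertibility of $M(\lambda)$ in $\mathbb{C}^{3\times 3}$ for $|\lambda|$ large, closes the argument exactly as in Section 3.1 of \cite{str:dem}; the only modification with respect to \cite{str:dem} is that the underlying Stokes $\mathcal{R}$-sectoriality is drawn from \cite{shimada} rather than from its no-slip counterpart.
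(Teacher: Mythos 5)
Your proposal is correct in substance but takes a more explicit route than the paper. The paper does not solve the resolvent equation by Schur complement; it splits $\mathcal{A}_{FS}=\tilde{\mathcal{A}}_{FS}+\mathcal{B}_{FS}$ with $\tilde{\mathcal{A}}_{FS}=\left(\begin{smallmatrix} A_q & -A_q\mathcal{P}S\\ 0&0\end{smallmatrix}\right)$ (note: upper triangular with zero bottom-right block, not your block-diagonal $\operatorname{diag}(A_q,\mathbb{K}^{-1}\mathcal{C}_2)$ — keeping the coupling $-A_q\mathcal{P}S$ in the unperturbed part is what makes its resolvent computable in closed form, namely $\lambda(\lambda-\tilde{\mathcal{A}}_{FS})^{-1}=\left(\begin{smallmatrix}\lambda(\lambda-A_q)^{-1} & -\lambda(\lambda-A_q)^{-1}\mathcal{P}S+\mathcal{P}S\\ 0 & \Id\end{smallmatrix}\right)$), and then treats $\mathcal{B}_{FS}=\left(\begin{smallmatrix}0&0\\ \mathbb{K}^{-1}\mathcal{C}_1 & \mathbb{K}^{-1}\mathcal{C}_2\end{smallmatrix}\right)$ as a finite-rank, relatively $\tilde{\mathcal{A}}_{FS}$-bounded perturbation with relative bound $0$, invoking the perturbation theorem for $\mathcal{R}$-sectorial operators and citing \cite[Theorem 3.11]{str:dem} for the estimates on $\mathcal{C}_1,\mathcal{C}_2$. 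Your Schur-complement computation — reducing to the invertibility of a $3\times3$ matrix $M(\lambda)$ for large $|\lambda|$ in a sector and then assembling the resolvent from $\mathcal{R}$-bounded building blocks — is the concrete counterpart of that abstract perturbation argument, and your identification of the crux (that $\mathcal{C}_1$ is only defined on $W^{2,q}$ data, which is rescued because $\mathcal{P}S$ maps into $\mathcal{D}(A_q)$ and $(\lambda-A_q)^{-1}$ maps $L^q_\sigma$ into $\mathcal{D}(A_q)$ with uniform $W^{2,q}$ bounds) is exactly the reason the paper's $\mathcal{B}_{FS}$ has relative bound zero. What your version buys is transparency about where invertibility is needed; what the paper's version buys is brevity, since the perturbation theorem absorbs both the invertibility of $M(\lambda)$ and the passage from uniform bounds to $\mathcal{R}$-bounds for the finite-rank pieces (a point you, like the paper, pass over lightly: uniform boundedness of a family of operators factoring through $\mathbb{R}^3$ does not by itself give $\mathcal{R}$-boundedness unless one uses that the relevant factors are fixed finite-rank maps composed with uniformly bounded matrices). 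Two trivial slips: your formula for $\mathcal{P}u$ has the wrong sign on the $(\mathfrak{l},\mathfrak{r})$-dependent terms (the first row gives $\mathcal{P}u=(\lambda-A_q)^{-1}f+\mathcal{P}S(\mathfrak{l},\mathfrak{r})-\lambda(\lambda-A_q)^{-1}\mathcal{P}S(\mathfrak{l},\mathfrak{r})$), and the reduced right-hand side $\tilde F(\lambda)$ also contains $\mathbb{K}^{-1}\mathcal{C}_1(\lambda-A_q)^{-1}f$, whose uniform control should be stated explicitly; neither affects the validity of the argument.
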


\begin{proof}
To prove this theorem we just show that in some sense the operator $ \mathcal{A}_{FS} $ is a small perturbation of the operator $ A_q $. To do so we write
\begin{equation*}
\mathcal{A}_{FS} = \tilde{\mathcal{A}}_{FS} + \mathcal{B}_{FS} 
\end{equation*}
where
\begin{equation*}
\tilde{\mathcal{A}}_{FS} = \begin{pmatrix} A_q & -A_q\mathcal{P}S \\ 0 & 0 \end{pmatrix}, \quad B_{FS}\begin{pmatrix} 0 & 0 \\ \mathbb{K}^{-1}\mathcal{C}_1 &  \mathbb{K}^{-1}\mathcal{C}_2 \end{pmatrix}.
\end{equation*}
$ \tilde{\mathcal{A}}_{FS} $ is an $ \mathcal{R} $-bounded operator on the same domain of $ \mathcal{A}_{FS} $, in fact
\begin{equation*}
\lambda(\lambda \Id -\tilde{\mathcal{A}}_{FS})^{-1} = \begin{pmatrix} \lambda(\lambda \Id -A_q)^{-1} &  -\lambda(\lambda \Id -A_q)^{-1} \mathcal{P}S +\mathcal{P}S \\
0 & \Id 
\end{pmatrix}
\end{equation*}
and the desired resolvent estimates follow by the $ \mathcal{R} $-boundedness of $ A_q $ and the continuity of $ \mathcal{P}S $. Finally $ \mathcal{C}_1 $ and $ \mathcal{C}_2 $ are linear and contiuous operators with finite dimention codomain. The proof is exactly the same of \cite[Theorem 3.11]{str:dem}, in fact the estimates are only based on the normal boundary condition and on the interior regularity (i.e. the fact that $ u \in W^{2,q}(\mathcal{F}_0) $ or the fact that $ \div u = 0 $). This prove that $ \mathcal{B}_{FS}$ is a finite rank operator on $ \mathcal{D}(\mathcal{A}_{FS}) $, which implies that $ \mathcal{B}_{FS}$ is a $ \tilde{\mathcal{A}}_{FS}$-bounded operator with bound $ 0 $. The prove is conclude.
\end{proof}

\subsection{Change of variables}

We translate the problem (\ref{equ:st1})-(\ref{equ:st2}) to an equivalent one on a  domain fixed in time. Let $ X $ the geometric change of variables associated with $ \mathcal{S} $ define in (\ref{X:def}), following the idea of \cite[Section 3]{GGH} we define
\begin{align*}
\tilde{v}(t,y) \quad = & \quad   \nabla Y(t,X(t,y))v(t, X(t,y)), \\
\tilde{p}(t,y) \quad  = & \quad p(t,X(t,y)), \\
\tilde{\mathfrak{r}}(t) \quad = & \quad \mathfrak{r}(t), \\
\tilde{\mathfrak{l}}(t) \quad = & \quad Q^T(t)\mathfrak{l}(t), \\
\mathcal{T}(\tilde{v}(t,y),\tilde{p}(t,y)) \quad = & \quad Q^T(t)T(Q(t)u(t,y),p(t,y)Q(t)).
\end{align*}
In this new variables the equations become

\begin{eqnarray*}
\displaystyle \frac{\partial \tilde{v} }{\partial t}  + ( \mathcal{M}-\mathcal{L})\tilde{v} +\mathcal{G}\tilde{p} = \tilde{f} && \text{for} \ x\in \mathcal{F}_0 , \\
\div \tilde{v}   = 0 && \text{for} \ x\in \mathcal{F}_0 ,  \\
\tilde{v}  \cdot n =   \tilde{v}_\mathcal{S} \cdot n && \text{for}  \  x\in \partial \mathcal{S}_0,    \\
(D(\tilde{v}) n )  \cdot \tau = - \alpha (\tilde{v} - \tilde{v}_\mathcal{S}) \cdot \tau   && \text{for} \ x\in \partial \mathcal{S}_0,  \\
\tilde{v}  \cdot n =  0 && \text{for}  \  x \in \partial \Omega,    \\
(D(\tilde{v}) n )  \cdot \tau = - \alpha \tilde{v}  \cdot \tau   && \text{for} \ x\in \partial \Omega,  \\
m \tilde{\mathfrak{l}}' (t) &=& -  \int_{\partial  \mathcal{S}_0} \mathcal{T}(\tilde{v},\tilde{p})  n \, ds - m r\tilde{\mathfrak{l}}^{\perp} + \tilde{g}_1,   \\
\mathcal{J} \tilde{\mathfrak{r}}' (t) &= & -  \int_{\partial  \mathcal{S}_0} y^\perp \cdot \mathcal{T}(\tilde{v},\tilde{p})  n   \, ds  +g_2,  \\
\tilde{v} |_{t= 0} = 0 & &  \text{for}  \  x\in  \mathcal{F}_0 ,   \\
 \ \tilde{\mathfrak{l}}' (0)=  0 , & &   \tilde{\mathfrak{r}}  (0)=  0. 
\end{eqnarray*}
where $ \mathcal{M} , \mathcal{L}, \mathcal{G}, \tilde{f} $ and $ \tilde{g}_1 $ are defined in \cite{GGH} as follows
\begin{equation*}
(\mathcal{L}u)_i =  \sum_{j,k}^2 \partial_j(g^{ij}\partial_k u_i) + 2 \sum_{j,k,l = 1}^2 g^{kl}\gamma_{jk}^i \partial_{l}u_j+\sum_{j,k,l}^2\left(\partial_k(g^{kl}\Gamma_{jl}^i +\sum_{m=1}^2 g^{kl}\Gamma_{jl}^m\Gamma_{km}^i) \right)u_j,
\end{equation*}
\begin{equation*}
(\mathcal{M}u)_i = \sum_{j=1}^2 \dot{Y}_j \partial_j u_i + \sum_{j,k = 1}^2 \left(\Gamma_{jk}^i\dot{Y}_k+(\partial_kY_i)(\partial_j\dot{X}_k)\right)u_j, \quad (\mathcal{G}p)_i = \sum_{j=1}^2 g^{ij}\partial_j p, 
\end{equation*}
and
\begin{equation*}
(\tilde{f})_i(t,y) = \sum_{l = 1 }^2  \partial_l Y_i(t, X(t,y)) f_l(t,X(t,y)) \quad \text{ and } \quad  \tilde{g}_1(t) = Q^T(t) g_1(t), 
\end{equation*}
where
\begin{equation*}
g^{ij} = \sum_{k = 1}^2 (\partial_k Y_i)(\partial_k Y_j), \quad g_{ij} = \sum_{k=1}^2 (\partial_i X_k)(\partial_j X_k), \quad \Gamma_{jk}^i = \frac{1}{2}\sum_{l=1}^2 g^{jk}(\partial_i g_{jl} + \partial_j g_{lk} - \partial_k g_{ij}).
\end{equation*}

\subsection{Fixed point}

We use a fix point argument to conclude. We rewrite the system above in the form 
\begin{eqnarray*}
\displaystyle \frac{\partial \tilde{v} }{\partial t}  - \Delta \tilde{v} +\nabla \tilde{p} = F_{0}(\tilde{v}, \tilde{p}, \tilde{\mathfrak{l}}, \tilde{\mathfrak{r}})+\tilde{f} && \text{for} \ x\in \mathcal{F}_0 , \\
\div \tilde{v}   = 0 && \text{for} \ x\in \mathcal{F}_0 ,  \\
\tilde{v}  \cdot n =   \tilde{v}_\mathcal{S} \cdot n && \text{for}  \  x\in \partial \mathcal{S}_0,    \\
(D(\tilde{v}) n )  \cdot \tau = - \alpha (\tilde{v} - \tilde{v}_\mathcal{S}) \cdot \tau   && \text{for} \ x\in \partial \mathcal{S}_0,  \\
\tilde{v}  \cdot n =  0 && \text{for}  \  x \in \partial \Omega,    \\
(D(\tilde{v}) n )  \cdot \tau = - \alpha \tilde{v}  \cdot \tau   && \text{for} \ x\in \partial \Omega,  \\
m \tilde{\mathfrak{l}}' (t) +  \int_{\partial  \mathcal{S}_0} T(\tilde{v},\tilde{p})  n \, ds&=& F_{1}(\tilde{v}, \tilde{p}, \tilde{\mathfrak{l}}, \tilde{\mathfrak{r}}) +\tilde{g}_1,   \\
\mathcal{J} \tilde{\mathfrak{r}}' (t) +  \int_{\partial  \mathcal{S}_0} y^\perp \cdot T(\tilde{v},\tilde{p})  n   \, ds&= &  F_{2}(\tilde{v}, \tilde{p}, \tilde{\mathfrak{l}}, \tilde{\mathfrak{r}}) + g_2,  \\
\tilde{v} |_{t= 0} = 0 & &  \text{for}  \  x\in  \mathcal{F}_0 ,   \\
 \ \tilde{\mathfrak{l}}' (0)=  0 , & &   \tilde{\mathfrak{r}}  (0)=  0, 
\end{eqnarray*}
where $ F_0, F_1$ and $ F_2 $ are defined in a similar fashion of \cite{GGH}, i.e.
\begin{gather*}
F_0(\tilde{v},\tilde{p},\tilde{\mathfrak{l}},\tilde{\mathfrak{r}}) = F_0(\tilde{v},\tilde{p}) = (\mathcal{L}-\Delta)\tilde{v}-\mathcal{M}\tilde{v}-\mathcal{G}(\tilde{p}), \quad 
F_1(\tilde{v},\tilde{p},\tilde{\mathfrak{l}},\tilde{\mathfrak{r}}) = -r(t)\tilde{\mathfrak{l}}^{\perp} +\int_{\mathcal{S}_0} (T-\mathcal{T})(\tilde{v},\tilde{p})n ds \\ \text{and} \quad F_2(\tilde{v},\tilde{p},\tilde{\mathfrak{l}},\tilde{\mathfrak{r}}) = \int_{\mathcal{S}_0} y^{\perp}\cdot(T-\mathcal{T})(\tilde{v},\tilde{p})n ds.
\end{gather*}
We consider the space
\begin{equation*}
\mathcal{K}^t = \left\{ (u,p,l,r) \in X^{t}_{4/3,4/3}\times Y^t_{4/3,4/3}\times W^{1,4/3}(0,t;\mathbb{R}^3) \quad : \quad u(0) = 0 \text{ and } (l,r)(0) = 0  \right\}
\end{equation*}
where 
\begin{gather*}
X^t_{p,q} =  W^{1,p}(0,t; L^q(\mathcal{F}_0))\cap L^p(0,t; W^{2,q}(\mathcal{F}_0)), \quad
Y^t_{p,q} =  L^p(0,t; \hat{W}^{1,q}(\mathcal{F}_0)),
\end{gather*}
and consider the map $ \phi^t $ such that 
\begin{equation*}
\phi^t(\tilde{u},\tilde{q},\tilde{l},\tilde{r} ) = (\tilde{v},\tilde{p},\tilde{\mathfrak{l}},\tilde{\mathfrak{r}})
\end{equation*}
where $ (\tilde{v},\tilde{p},\tilde{\mathfrak{l}},\tilde{\mathfrak{r}}) $ is the solution of the above system with 
\begin{equation*}
\begin{pmatrix}
F_0(\tilde{u},\tilde{q},\tilde{l},\tilde{r}) \\
F_1(\tilde{u},\tilde{q},\tilde{l},\tilde{r}) \\
F_2(\tilde{u},\tilde{q},\tilde{l},\tilde{r})
\end{pmatrix}
\end{equation*} 
in the right hand side.

It remains to show that $ \phi^t$ is a map from $ \mathcal{K}^t $ to $ \mathcal{K}^t $ and that it is a contraction if we choose $ t $ enough small. To do so we follow the estimates from \cite[Lemma 6.6 and Lemma 6.7]{GGH}. Indeed they are much easier that the ones of \cite{GGH} because the change of variables does not depend on the solution itself and no assumption on p and q is required because we do not need any embedding result.

\appendix

\section*{Appendix A.  Proof of Theorem \ref{THM:exw}}

As pointed out previously it is possible to follow the proof presented in \cite{exi:GeH} and prove that there exists a weak solution which satisfies (\ref{wek:for}) for any test function in $ \mathcal{T}_{0,T} $ (recall the definition in  \eqref{def-T}). 

We prove that the solution satisfies (\ref{wek:for}) for any test function in $\mathcal{W}_{0,T} $. The proof in \cite{exi:GeH} is based on a local-in-time existence which leads to concatenate solution up to collision, see last paragraph of Section 5.7 of \cite{exi:GeH}. Therefore it is enough to prove that the local-in-time existence result holds also for the restriction in time of the element of $\mathcal{W}_{0,T} $. We state the local-in-time existence result.
\begin{Theorem}

Let $ \Omega \subset \mathbb{R}^2 $ an open, bounded, connected set with smooth boundary, $\mathcal{S}_0$ a closed, bounded, connected and simply connected subset of $ \Omega $ with smooth boundary, $ u_0 \in\ \mathcal{H}_{S_0}$ and $ \delta > 0 $ such that $ \dist(\partial \Omega, \mathcal{S}_0)> 2\delta$. There exists $ \tau > 0 $ and a couple $(\mathcal{S},u)$ such that satisfying 
\begin{itemize}
\item $\mathcal{S}(t) \subset \Omega $ is a bounded domain of $ \mathbb{R}^2 $ for all $ t \in [0,\tau] $, such that $ \chi_S(t,x) = 1_{S(t)}(x) \in L^{\infty}((0,\tau)\times \Omega) $, 
\item $ u $ belongs to the space $ \mathcal{V}_{\tau} $ where $ \mathcal{F}(t) = \Omega \setminus \overline{\mathcal{S}(t)}$ for all $ t \in [0,\tau] $,
\item for any $ \varphi \in \mathcal{W}_{\tau} $, it holds
\begin{gather}
-\int_0^{\tau} \int_{\mathcal{F}(t)}u_{F}\cdot \partial_t \varphi_F dx dt -\int_0^{\tau}\int_{\mathcal{S}(t)}\rho_S u_S\cdot \partial_t \varphi_S dx dt -\int_0^{\tau}\int_{\mathcal{F}(t)}  u_F \otimes u_F : \nabla \varphi_F dx dt \nonumber \\
+ 2 \int_0^{\tau} \int_{\mathcal{F}(t)} Du_F :D\varphi_F dx dt + 2 \alpha \int_0^{\tau} \int_{\partial \Omega} u_F \cdot \varphi_F ds dt + 2  \alpha \int_0^{\tau} \int_{\partial \mathcal{S}(t)} (u_F-u_S)\cdot (\varphi_F -\varphi_S) ds dt  \label{wek:for2} \\
= \int_{\mathcal{F}(0)} u_{F,0}\cdot \varphi_F|_{t = 0} dx +\int_{\mathcal{S}(0)} \rho_S u_{S,0}\cdot\varphi_S|_{t=0} dx-\int_{\mathcal{F}(\tau)} u_{F}\cdot \varphi_F|_{t = \tau} dx -\int_{\mathcal{S}(\tau)} \rho_S u_{S}\cdot\varphi_S|_{t=\tau} dx.
\nonumber 
\end{gather}
\item $\mathcal{S} $ is transported by the rigid vector fields $ u_S $, i.e. for any $ \psi \in C^{\infty}([0,\tau]; \mathcal{D}(\overline{\Omega})) $, it holds
\begin{equation*}
\int_0^{\tau}\int_{\mathcal{S}(t)} \partial_t \psi + \int_0^{\tau} \int_{\mathcal{S}(t)} u_S\cdot \nabla \psi =\int_{\mathcal{S}(\tau)}\psi|_{t=\tau} -\int_{\mathcal{S}_0}\psi|_{t=0}.
\end{equation*}
\item $ \dist(\partial \Omega, \mathcal{S}(t)) \geq 2 \delta $ for almost any $ t \in [0,\tau] $.
\end{itemize}

\end{Theorem}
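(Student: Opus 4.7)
The plan is to follow the existence scheme of \cite{exi:GeH}, adapted to the two-dimensional setting, to produce a weak solution $(\mathcal{S}, u)$ on $[0, \tau]$ satisfying the weak formulation for the smaller test class $\mathcal{T}_{0,\tau}$, and then to enlarge the admissible class to $\mathcal{W}_\tau$ by a density-and-truncation argument. Concretely, I would invoke the Galerkin/penalization procedure of \cite[Sections 4-5]{exi:GeH} in its 2D version, which, for $\tau$ small enough (so that the constraint $\dist(\partial \Omega, \mathcal{S}(t)) \geq 2\delta$ propagates by continuity from the initial condition), produces a couple $(\mathcal{S}, u)$ satisfying the transport equation and the weak formulation (\ref{wek:for2}) for every $\varphi \in \mathcal{T}_{0,\tau}$ (so without boundary terms at $\tau$). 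The construction also yields an energy inequality and hence the weak continuity of $u$ in $L^2_\sigma(\Omega)$ at $t = \tau$.

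To extend (\ref{wek:for2}) to an arbitrary $\varphi \in \mathcal{W}_\tau$, I proceed in two stages. Let $\chi_\eta \in C^\infty([0, \tau])$ be a cutoff equal to $1$ on $[0, \tau - 2\eta]$ and vanishing on $[\tau - \eta, \tau]$; the product $\chi_\eta \varphi$ still belongs to $\mathcal{W}_\tau$ and vanishes near $\tau$. Applying the geometric change-of-variables regularization of Lemma \ref{lem:app}, I obtain approximants $\varphi_\varepsilon^\eta \in \mathcal{T}_{0,\tau}$ such that $\varphi_\varepsilon^\eta \to \chi_\eta \varphi$ in $\mathcal{E}_\tau$ (together with the corresponding convergence of $\partial_t \varphi_\varepsilon^\eta$), which is enough to pass to the limit in all linear terms of (\ref{wek:for2}), in the boundary integrals on $\partial \Omega$ and $\partial \mathcal{S}(t)$, and in the quadratic convective term thanks to the $L^2_t H^1_x$ convergence of the gradients. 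Letting first $\varepsilon \to 0$ and then $\eta \to 0$, the contribution coming from $\partial_t(\chi_\eta \varphi) = \chi'_\eta \varphi + \chi_\eta \partial_t \varphi$ concentrates $\chi'_\eta$ on $[\tau - 2\eta, \tau - \eta]$ and, thanks to the weak $L^2_\sigma$-continuity of $u$ at $\tau$, converges to precisely the boundary contributions $-\int_{\mathcal{F}(\tau)} u_F \cdot \varphi_F|_{t=\tau} - \int_{\mathcal{S}(\tau)} \rho_S u_S \cdot \varphi_S|_{t=\tau}$ appearing in (\ref{wek:for2}). Concatenation of such local-in-time solutions, exactly as in the last paragraph of Section 5.7 of \cite{exi:GeH}, then yields Theorem \ref{THM:exw}.

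The main obstacle, and the reason the argument requires Lemma \ref{lem:app} rather than a naive time convolution, is preservation of the rigid structure of $\varphi_S$ on $\mathcal{S}(t)$ under regularization: a direct mollification in $(t,x)$ would average fluid and solid values across the moving interface $\partial \mathcal{S}(t)$ and destroy the identity $\varphi_S(t, \cdot) \in \mathcal{R}$. The geometric change of variables $X(t, \cdot)$ from Claim \ref{cla:cov} is precisely what decouples the regularization inside $\mathcal{S}_0$ (where it reduces to a smoothing of the rigid coefficients $l_\varphi$, $r_\varphi$) from that inside $\mathcal{F}_0$, so that after pushing back the approximants $\varphi_\varepsilon^\eta$ lie in $\mathcal{T}_{0,\tau}$. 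Once this structural point is handled, the remaining estimates needed to pass to the limit are routine consequences of the convergences already established in the proof of Lemma \ref{lem:app}.
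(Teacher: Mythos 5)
There is a genuine gap, and it sits exactly at the point the paper flags as the real difficulty. Your plan is to obtain \eqref{wek:for2} first for $\varphi\in\mathcal{T}_{0,\tau}$ and then to enlarge the test class to $\mathcal{W}_\tau$ by approximating $\chi_\eta\varphi$ via the geometric regularization of Lemma \ref{lem:app}, claiming that "after pushing back the approximants $\varphi^\eta_\varepsilon$ lie in $\mathcal{T}_{0,\tau}$". They do not. The construction in Lemma \ref{lem:app} (change of variables $X$ from Claim \ref{cla:cov}, convolution \emph{in time}, push-back) regularizes only the time variable: if $\varphi_F\in W^{1,\infty}(0,\tau;H^1_\sigma(\Omega))$, the output still has only $H^1_\sigma$ spatial regularity, whereas membership in $\mathcal{T}_{0,\tau}$ (see \eqref{def-T}) requires $\varphi_F\in C^\infty([0,\tau);\mathcal{D}_\sigma(\overline\Omega))$, i.e.\ spatial smoothness. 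So you cannot apply the already-known $\mathcal{T}_{0,\tau}$-formulation to your approximants, and the density argument does not close. The missing step is precisely the approximation of an $H^1_\sigma$ fluid part matched (in normal component only) to a rigid field across the moving interface $\partial\mathcal{S}(t)$ by spatially smooth, divergence-free fields compatible with the solid motion; this is nontrivial because naive spatial mollification destroys both the rigidity on $\mathcal{S}(t)$ and the divergence-free constraint near $\partial\mathcal{S}(t)$. The paper's proof explicitly warns that "the main difficulty is not the lack of regularity in time but the lack of regularity in space".

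The paper's route is structurally different: it does not work at the level of the limit solution at all. It goes back to the approximate (penalized) problems of \cite{exi:GeH} and, for each $n$, builds an \emph{admissible test function of the $n$-th approximate problem} of the form $\varphi^n=(1-\chi^n_S)\varphi_F+\chi^n_S\varphi^n_S$, using the flow $\phi_n$ of $P^n_Su^n$, a boundary-layer cutoff $\chi(n^\alpha z)$ near $\partial\mathcal{S}_0$ as in \eqref{con:phi}, and a divergence-restoring correction $\Phi^n_{2,S}$, together with quantitative rates ($O(n^{-\alpha/(p+\varepsilon)})$ in $L^p$, $O(n^{\alpha(1-1/p)})$ in $H^1$) calibrated so that one can pass to the limit in $n$ in the approximate weak formulations tested against $\varphi^n$. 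This is the analogue of Proposition 12 of \cite{exi:GeH} for the rougher class $\mathcal{W}_\tau$, and it is the content your proposal would need to supply (or replace by a genuine space--time density theorem for test functions subordinate to the moving rigid body) before the rest of your argument, including the removal of the cutoff at $t=\tau$ via weak $L^2$-continuity, can be carried out.
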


\begin{proof}
By the proof in \cite{exi:GeH} we already know that this theorem holds with test functions in $ \mathcal{T}_{\tau} $, which is the set of $\varphi|_{[0,\tau]} $ where $ \varphi \in \mathcal{T}_{0,T}$.
We prove that (\ref{wek:for2}) holds for any test function in $ \mathcal{W}_{\tau} $. To do so we approximate the test functions in $ \mathcal{W}_{\tau} $ by admissible test functions of the approximate problem defined in  \cite[Section 2]{exi:GeH}.
To do so we need an equivalent of Proposition 12 in \cite{exi:GeH}, i.e. we prove the following claim.

\begin{Claim}
Let $ \alpha > 0 $ and let $ \varphi \in \mathcal{W}_{\tau} $. Then there exists a sequence $ \varphi^n \in W^{1,\infty}(0,\tau;L^2_{\sigma}(\Omega)) \cap L^{\infty}(0,\tau;H^{1}_{\sigma}(\Omega)) $ of the form 
\begin{equation*}
\varphi^n = (1-\chi^n_S)\varphi_F + \chi^n_S \varphi^n_S,
\end{equation*}
that satisfies 
\begin{itemize}
\item $ \| \sqrt{\chi^n_S}(\varphi^n_S-\varphi_S)\|_{C([0,\tau];L^p(\Omega))} = O(n^{-\alpha/(p+\varepsilon)}) \text{ for any } p \in (2, \infty) \text{ and for any } \varepsilon > 0 $,

\item $ \varphi^n \to \varphi $ \text{ strongly in } $ C([0,\tau]; L^{p}) $ \text{ for } $ p > 2 $, 

\item $ \| \varphi^n\|_{C([0,\tau];H^1(\Omega))} = O(n^{\alpha(1-1/p)})$ \text{ for any } $ p > 1 $,

\item $ \|\chi^n_s(\partial_t+P^n_S u^n\cdot \nabla)(\varphi^n_S-\varphi_S)\|_{L^{\infty}(0,\tau;L^p(\Omega))} = O(n^{-\alpha/(p+\varepsilon)}) \text{ for any } p \in (2, \infty) \text{ and for any } \varepsilon > 0 $, 

\item $ (\partial_t +P^n_S u^n\cdot\nabla)\varphi^n \to (\partial_t + P_S u \cdot \nabla) \varphi $ \text{ weakly* in } $L^{\infty}(0,\tau; L^p(\Omega)) $.
\end{itemize}
\end{Claim}

\begin{proof}[Proof of the claim.] To prove this claim we make the same construction than \cite{exi:GeH}. The main difficulty is not the lack of regularity in time but the lack of regularity in space. By the fact that the construction of this approximation is quite technical and involved, we present here quite rapidly the construction and we refer to \cite[Section 5.3]{exi:GeH} for more details. 

Recall that a weak solution $(\mathcal{S}, u) $, constructed in \cite{exi:GeH} comes as a limit of solutions $ (\mathcal{S}_n, u_n) $ of some approximate problems and recall $ \phi_n \in W^{1,\infty}(0,\tau; C^{\infty}) $ the flow from \cite{exi:GeH}, i.e. $ \phi_n(t): \mathbb{R}^2 \to \mathbb{R}^2 $ is a $ C^{\infty }$-diffeomorphism and it is the flow associated with $ P^n_S u_n $.
We define an approximation $ \varphi_n $ of $ \varphi \in \mathcal{W}_{\tau}$ using the flow $ \phi_n $. 

The idea of G\'erard-Varet and Hillairet is to use $ \phi_n $ to translate the problem to a ``fixed'' domain and then approximate. Define $ \tilde{\Phi}^n_S $ and $ \tilde{\Phi}^n_F $ via
\begin{equation*}
\varphi_S(t, \phi_n(t,y)) = d\phi_n(t,y)(\tilde{\Phi}^n_S(t,y)), \quad \varphi_F(t, \phi_n(t,y)) = d\phi_n(t,y)(\tilde{\Phi}_F^n(t,y)).   
\end{equation*}     
$ \tilde{\Phi}^n_S $ and $ \tilde{\Phi}^n_F $ are defined in a fixed solid domain in the sense that the solid part is fixed, i.e. $ \phi_n|_{\mathcal{S}_0} : \mathcal{S}_0 \to \mathcal{S}_n(t) $. In the approximation we do not change $ \varphi $ in the fluid part so we define $ \varphi_n|_{(0,\tau)\times \mathcal{F}^n(t)} = \varphi|_{(0,\tau)\times \mathcal{F}_n(t)} $ and $ \varphi_n $ in the solid part such that it is closed to a solid rotation and such that it makes $ \varphi_n $ an $ L^2(0,\tau;H^1_{\sigma}(\Omega)) $ function. To do so we approximate $ \tilde{\Phi}^n_S $ by $ \Phi^n_S = \Phi^n_{1,S} + \Phi^n_{2,S} $, where 
\begin{equation}
\label{con:phi}
\Phi^n_{1,S} = \tilde{\Phi}^n_S+\chi(n^{\alpha}z)((\tilde{\Phi}^n_F -\tilde{\Phi}^n_S)-[(\tilde{\Phi}^n_S-\tilde{\Phi}^n_F)\cdot e_z]e_z ),
\end{equation}
and $ \Phi_{2,S}^n $ is defined in such a way to make $ \Phi_S^n $ divergence free. These lead us to define 
\begin{equation*}
\varphi^{n}_S(t,\phi_n(t,y)) = d\phi_n(t,y) (\Phi^n_S(t,y)).
\end{equation*}
To conclude the proof of the claim we have to present the estimates. 
\begin{align*}
\| \Phi^n_{1,S}-\Phi_S \|_{W^{1,\infty}(0,\tau; L^p(\mathcal{S}_0)) } = & \| \chi(n^{\alpha}z)((\tilde{\Phi}^n_F -\tilde{\Phi}^n_S)-[(\tilde{\Phi}^n_S-\tilde{\Phi}^n_F)\cdot e_z]e_z )\|_{W^{1,\infty}(0,\tau; L^p(\mathcal{S}_0)))} \\
\leq & \|\chi(n^{\alpha}z)\|_{L^q{(\mathcal{S}_0))}}\| (\tilde{\Phi}^n_F -\tilde{\Phi}^n_S)-[(\tilde{\Phi}^n_S-\tilde{\Phi}^n_F)\cdot e_z]e_z \|_{W^{1,\infty}(0,\tau; L^r(\mathcal{S}_0)))} \\ 
\leq & C(r)\|\chi(n^{\alpha}z)\|_{L^q{(\mathcal{S}_0))}}\| (\tilde{\Phi}^n_F -\tilde{\Phi}^n_S)-[(\tilde{\Phi}^n_S-\tilde{\Phi}^n_F)\cdot e_z]e_z \|_{W^{1,\infty}(0,\tau; H^1(\mathcal{S}_0)))} \\
\leq & C(r)n^{-\alpha/q}.
\end{align*}
where $ 1/q+1/r = 1/p $. In similar way
\begin{align*}
\| \Phi^n_{1,S}-\Phi_S \|_{W^{1,\infty}(0,\tau; H^1(\mathcal{S}_0)) } = & \| \chi(n^{\alpha}z)((\tilde{\Phi}^n_F -\tilde{\Phi}^n_S)-[(\tilde{\Phi}^n_S-\tilde{\Phi}^n_F)\cdot e_z]e_z )\|_{W^{1,\infty}(0,\tau; H^1(\mathcal{S}_0)))} \\
\leq & C + \|n^{\alpha}\nabla\chi(n^{\alpha}z)\|_{L^q{(\mathcal{S}_0))}}\| (\tilde{\Phi}^n_F -\tilde{\Phi}^n_S)-[(\tilde{\Phi}^n_S-\tilde{\Phi}^n_F)\cdot e_z]e_z \|_{W^{1,\infty}(0,\tau; L^r(\mathcal{S}_0)))} \\ 
\leq & C + C(r)n^{\alpha(1-1/q)}\| (\tilde{\Phi}^n_F -\tilde{\Phi}^n_S)-[(\tilde{\Phi}^n_S-\tilde{\Phi}^n_F)\cdot e_z]e_z \|_{W^{1,\infty}(0,\tau; H^1(\mathcal{S}_0)))} \\
\leq & C + C(r)n^{\alpha(1-1/q)}.
\end{align*}
Moreover
\begin{align*}
\| \Phi^n_{2,S} \|_{W^{1,\infty}(0,\tau; L^p(\mathcal{S}_0)) } \leq & C(r)\| \Phi^n_{2,S} \|_{W^{1,\infty}(0,\tau; W^{1,r}(\mathcal{S}_0)) }\\
\leq & C(r) \| \div \Phi^n_{1,S} \|_{W^{1,\infty}(0,\tau; L^r(\mathcal{S}_0)) } \\
= & C(r) \| \chi(n^{\alpha}z)\mathcal{J}^n\|_{W^{1,\infty}(0,\tau; L^r(\mathcal{S}_0)) } \\
\leq & C(r) \| \chi(n^{\alpha}z)\|_{L^p(\mathcal{S}_0)}\|\mathcal{J}^n\|_{W^{1,\infty}(0,\tau; L^2(\mathcal{S}_0)) }  \\
\leq & C(r) n^{-\alpha/p}.
\end{align*}
where $ \mathcal{J}^n$ is defined in \cite{exi:GeH}, $ 1-2/r = - 2/p $ i.e. $ r = 2p/(p+2)$ and it holds $ 1/p + 1/2 = 1/r $. In a similar way
\begin{align*}
\| \Phi^n_{2,S} \|_{W^{1,\infty}(0,\tau;H^1(\mathcal{S}_0)) } \leq  \| \div \Phi^n_{1,S} \|_{W^{1,\infty}(0,\tau; L^2(\mathcal{S}_0)) }
\leq  C.
\end{align*}

The estimates above prove the first three points of the claim. For the last two points we follow the computation of \cite{exi:GeH}, namely
\begin{align*}
\left\|\chi^n_S(\partial_t + P^n_S u^n\cdot \nabla)(\varphi^n_S-\varphi_S))\right\|_{L^{\infty}(0,\tau;L^{p}(\Omega))} \leq \left\| \frac{\partial}{\partial t} d\phi^n(t,y)(\Phi^n_S-\Phi_S)\right\|_{L^{\infty}(0,\tau;L^{p}(\Omega))} \leq  C n^{-\alpha/(p+\varepsilon)}.
\end{align*} 
For the last point we compute
\begin{align*}
(\partial_t+P^n_Su^n\cdot \nabla) \varphi^n -(\partial_t+P_Su\cdot \nabla) \varphi = & (1-\chi^n_S)(\partial_t+P^n_Su^n\cdot \nabla)(\varphi^n_F-\varphi_F)+\chi^n_S(\partial_t+P^n_Su^n\cdot \nabla)(\varphi^n_S-\varphi_S) \\ & +(1-\chi^n_S)(\partial_t+P^n_Su^n\cdot \nabla)\varphi_F+\chi^n_S(\partial_t+P^n_Su^n\cdot \nabla)\varphi_S,
\end{align*}
which converge converge weakly* to $ 0 $ by the strong convergence of $ \chi^n_S $ and the weak convergence of $ P^n_S u^n$.
\end{proof}

The above claim prove that there exists a good approximation $ \varphi_n $, for $ \varphi \in \mathcal{W}_{\tau} $ that leads us to pass to the limit in the approximate problem. This means that we can test the weak formulation with any function in $ \mathcal{W}_{\tau}$.

\end{proof}

\ \par \ 

 {\bf Acknowledgements.} 
The author was supported by the Agence Nationale de la Recherche, 
 Project IFSMACS, grant ANR-15-CE40-0010 and the Conseil R\'egional dÄ'Aquitaine, grant 2015.1047.CP.

\end{document}